\documentclass[11pt,reqno]{amsart}
\usepackage[T1]{fontenc}
\usepackage{lmodern}
\usepackage{microtype}
\usepackage{mathtools,amssymb}
\usepackage{enumitem}
\usepackage{needspace}
\usepackage[a4paper,textwidth=158mm,textheight=237mm,centering]{geometry}
\usepackage[colorlinks=false,pdfborder={0 0 0}]{hyperref}
\hypersetup{
 pdftitle={Three Problems on Separable Quotients of Precompact Abelian Groups},
 pdfauthor={Li-Hong Xie and Jiang Yang},
 pdfsubject={Connected Baire reflexive precompact groups and finite-power quotients},
 pdfkeywords={precompact abelian group, separable quotient, connectedness, Baire, Pontryagin reflexivity}}
\allowdisplaybreaks[1]
\setlist[enumerate,1]{label=\textup{(\roman*)},leftmargin=2.2em,itemsep=3pt,topsep=5pt}
\newtheorem{theorem}{Theorem}[section]
\newtheorem{proposition}[theorem]{Proposition}
\newtheorem{lemma}[theorem]{Lemma}
\newtheorem{corollary}[theorem]{Corollary}
\theoremstyle{definition}

\newtheorem{problem}{Problem}
\theoremstyle{remark}
\newtheorem{remark}[theorem]{Remark}
\newtheorem{example}[theorem]{Example}
\numberwithin{equation}{section}
\newcommand{\T}{\mathbb T}
\newcommand{\R}{\mathbb R}
\newcommand{\Q}{\mathbb Q}
\newcommand{\Z}{\mathbb Z}
\newcommand{\N}{\mathbb N}
\newcommand{\cc}{\mathfrak c}
\newcommand{\cl}{\operatorname{cl}}
\newcommand{\Int}{\operatorname{int}}
\newcommand{\supp}{\operatorname{supp}}
\newcommand{\id}{\operatorname{id}}
\newcommand{\Hom}{\operatorname{Hom}}
\newcommand{\Sat}{\operatorname{Sat}}

\newcommand{\spanQ}{\operatorname{span}_{\Q}}
\newcommand{\res}{\mathbin{\upharpoonright}}
\newcommand{\gen}[1]{\langle #1\rangle}
\newcommand{\GL}{\operatorname{GL}}
\newcommand{\doi}[1]{\href{https://doi.org/#1}{doi:\nolinkurl{#1}}}

\title[Three problems on separable quotients]{Three Problems on Separable Quotients of Precompact Abelian Groups}
\author{Li-Hong Xie}
\address{School of Mathematics and Computational Science, Wuyi University,
Jiangmen, Guangdong 529000, P.R. China}
\email{yunli198282@126.com}
\author{Jiang Yang}
\address{School of Mathematical Sciences, Guangxi Minzu University,
Nanning 530006, P.R. China}
\email{yangjiangdy@126.com}
\thanks{Jiang Yang is the corresponding author.}
\date{}
\subjclass[2020]{Primary 22A05; Secondary 22A10, 54D05, 54E52, 54H11, 54B15}
\keywords{Precompact abelian group, separable quotient, connected group,
Baire group, Pontryagin reflexivity, $h$-embedded subgroup}

\begin{document}
\begin{abstract}
We address three problems on separable quotients of topological
groups posed by Leiderman, Morris, and Tkachenko in \cite{LMT} published on Israel Journal of Mathematics. First, we construct in
ZFC a connected Baire Pontryagin-reflexive dense subgroup of $\T^{\cc}$
whose countable subgroups are $h$-embedded and whose uncountable subgroups
are dense. Its underlying abstract group is the circle group, and all its
compact subsets are finite. Second, we construct a zero-dimensional
Baire Pontryagin-reflexive example with the same subgroup properties whose
underlying group is free abelian of rank $\cc$. Both examples have no
nontrivial separable Hausdorff quotient. Third, for the group constructed in their Theorem~3.5, we
determine every closed subgroup of every finite power up to an integral
change of coordinates and prove that every countable subgroup of every
Hausdorff quotient of a finite power is $h$-embedded and closed. The same
conclusions hold for our free Baire reflexive example. These results answer Problem~1.25 negatively, Problem~3.12 affirmatively and realize all three regularity properties in
Problem~3.14 simultaneously in \cite{LMT}.
\end{abstract}
\maketitle

\section{Introduction}\label{sec:intro}

Throughout the paper, groups are Hausdorff and quotient groups carry the
quotient topology. Thus a quotient homomorphism is continuous, surjective,
and open. A continuous homomorphic image with a separately specified
topology need not be a quotient in this sense. This distinction is
particularly relevant for precompact abelian groups, whose continuous
characters take values in the metrizable circle group $\T$.

The separable quotient problem asks how much of a topological structure
can be retained in a separable quotient. Its classical Banach-space form
asks whether every infinite-dimensional Banach space has an
infinite-dimensional separable quotient( see \cite[Problem~1.1]{LMT} and the Scottish Book \cite{Mauldin2015}).
At the time of the account in \cite{LMT}, the general problem was open. The answer is positive for reflexive Banach spaces: every infinite-dimensional reflexive Banach space has an infinite-dimensional separable quotient, originally due to Pe{\l}czy\'{n}ski \cite{Pelczynski1964}; see also \cite[Corollary~1.9]{LMT} and the stronger weakly compactly generated case of Amir and Lindenstrauss \cite{AmirLindenstrauss1968}. The analogous question for locally convex spaces has a more varied history: large positive classes coexist with counterexamples. Positive results begin with Eidelheit \cite{Eidelheit1936} and Robertson \cite{Robertson1989}; counterexamples were constructed by K\c{a}kol, Saxon and Todd \cite{KakolSaxonTodd2014}. This contrast led Leiderman, Morris, and Tkachenko \cite{LMT} to formulate a systematic separable quotient program for topological groups.

Passing from vector spaces to groups changes the problem in two ways. There is no linear dimension with which to exclude a small quotient, so one must distinguish nontrivial from infinite. Moreover, separability and metrizability are independent requirements for general topological groups. This produces four natural quotient questions, stated systematically in \cite[Problems~1.21--1.24]{LMT}.

The first problem considered here was motivated by a striking positive analogy. Reflexive Banach spaces have separable infinite-dimensional quotients, originally due to Pe{\l}czy\'{n}ski \cite{Pelczynski1964}, and locally compact abelian groups are Pontryagin reflexive and also satisfy the corresponding group-theoretic quotient assertions, proved by Leiderman, Morris and Tkachenko \cite[Theorem~2.13]{LMT}. They therefore asked whether Pontryagin reflexivity alone is enough.

\Needspace{9\baselineskip}
\begin{problem}[Separable quotients of reflexive groups; {\cite[Problem~1.25]{LMT}}]
\label{prob:reflexive}
Does every infinite reflexive abelian topological group $G$ have a
separable quotient group which is
\begin{enumerate}
\item nontrivial;
\item infinite;
\item metrizable;
\item infinite and metrizable?
\end{enumerate}
\end{problem}

The other two problems originate in the precompact counterexample of
\cite[Theorem~3.5]{LMT}. Recall that a subgroup $D\leq A$ is
\emph{$h$-embedded in $A$} if every abstract homomorphism $D\to\T$
extends to a continuous character of $A$. The construction in that
theorem gives an uncountable dense subgroup $G\leq\T^{\cc}$ whose
countable subgroups are $h$-embedded and whose uncountable subgroups are
dense. These properties imply that every countable subgroup of $G$ is
closed and every nontrivial Hausdorff quotient of $G$ is nonseparable.
Leiderman, Morris, and Tkachenko further proved that every nontrivial
quotient of every power $G^\tau$, $\tau\geq1$, is nonseparable
\cite[Theorem~3.13]{LMT}.

Their finite-power argument controls countable subgroups of certain
character images; see the proof of~\cite[Theorem~3.11]{LMT}. Passing to
an arbitrary quotient of $G^k$ requires an additional argument, since
the inverse image of a countable subgroup can be uncountable. This leads
to the following question.

\begin{problem}[Quotients of finite powers; {\cite[Problem~3.12]{LMT}}]
\label{prob:powers}
Let $G$ be the group constructed in Theorem~3.5 of~\cite{LMT}, let
$k\geq1$ be an integer, and let $H$ be a quotient group of $G^k$.
Is every countable subgroup of $H$ $h$-embedded, or at least closed?
\end{problem}

The same construction raises a second issue. Its subgroup properties
exclude nontrivial separable quotients, but the example is
zero-dimensional. Can the subgroup properties responsible for this
obstruction coexist with connectedness, the Baire property, or Pontryagin
reflexivity? The published article formulates this question as follows.

\begin{problem}[Regularity of the precompact example; {\cite[Problem~3.14]{LMT}}]
\label{prob:regularity}
Does there exist a precompact abelian group $G$ as
in~\cite[Theorem~3.5]{LMT} which has one of the following additional
properties?
\begin{enumerate}[label=\textup{(\alph*)}]
\item $G$ is connected;
\item $G$ is Baire;
\item $G$ is reflexive.
\end{enumerate}
\end{problem}

In Problem~\ref{prob:regularity}, the phrase ``as in Theorem~3.5'' refers
to the subgroup properties and quotient obstruction described above;
zero-dimensionality is not imposed when asking for a connected example.
Part~(iii) of Problem~\ref{prob:reflexive}, read literally, allows the
trivial metrizable quotient.

We resolve these questions through two constructions and a structure
theorem. The first construction has underlying abstract group $\T$ and
is simultaneously connected, Baire, and Pontryagin reflexive. It retains
both subgroup properties of the original example, and hence has no
nontrivial separable Hausdorff quotient. It therefore realizes all three
regularity properties in Problem~\ref{prob:regularity} simultaneously and
gives negative answers to parts~(i), (ii), and~(iv) of
Problem~\ref{prob:reflexive}, as well as to the nontrivial-quotient
interpretation of part~(iii). The second construction is free abelian of
rank $\cc$, zero-dimensional, Baire, and Pontryagin reflexive, and has
the same subgroup properties. Thus the Baire and reflexive properties
can also be realized while retaining zero-dimensionality. Both
constructions are in ZFC.

For the particular free group constructed in~\cite[Theorem~3.5]{LMT},
we determine the closed subgroups of every finite power up to an integral
change of coordinates. The resulting normal form proves that every
countable subgroup of every Hausdorff quotient of a finite power is
$h$-embedded and closed, answering both alternatives in
Problem~\ref{prob:powers} affirmatively. The argument also applies to our
free Baire reflexive example; see Corollary~\ref{P:cor:free-powers}.

The duality ingredient used below is the published work of Ferrer,
Hern\'andez, Sep\'ulveda, and Trigos-Arrieta~\cite{FHST} relating the Baire
property to precompact duality. Related work of Peng~\cite{Peng} studies
the densities and weights of quotients of precompact abelian groups.
The present results concern the coexistence of regularity with the
subgroup properties above, and the structure of finite-power quotients.

This paper organized as follows: Section~\ref{sec:prelim} collects the common algebraic, category, and
duality preliminaries. The proof of Theorem~\ref{C:thm:main} occupies
Section~\ref{sec:connected}. Its algebraic recursion assigns separate
countable prescribed domains to the characters, while a fiberwise
criterion and a Cantor-set fusion argument establish connectedness.
Section~\ref{sec:free} begins with three auxiliary lemmas and then
states and proves Theorem~\ref{B:thm:main}. Its proof contains the entire
free-group construction, from independent selections in dense
$G_\delta$-sets to the verification of the subgroup, category, and
duality properties.
Section~\ref{sec:powers} proves Theorem~\ref{P:thm:structure} by
extracting a saturated sublattice of $\Z^k$ from the character rows
that have countable image on a closed subgroup. The answers to
Problems~\ref{prob:regularity}, \ref{prob:reflexive}, and
\ref{prob:powers} are given in Theorem~\ref{C:thm:main},
Theorem~\ref{B:thm:main}, and Theorem~\ref{P:thm:structure},
respectively.
\section{Common preliminaries}\label{sec:prelim}

All groups are abelian and written additively. We put
$\N=\{1,2,\ldots\}$ and $\omega=\{0,1,\ldots\}$. We write
$\Hom(A,\T)$ for abstract homomorphisms, without requiring continuity,
and $\gen{S}$ for the subgroup generated by $S$. For a set $X$, let
$A(X)$ be the abstract free abelian group with basis $X$; every element
has finite support. The letters used locally in
Sections~\ref{sec:connected}, \ref{sec:free}, and~\ref{sec:powers}
are reset in each section; in particular their groups denoted by $G$
are not identified. All topological groups are assumed Hausdorff.
The circle group $\T=\R/\Z$ carries its usual compact topology unless
another topology is explicitly specified.  We put
$\cc=2^{\aleph_0}$ and regard a cardinal as its initial ordinal.
A character of a topological abelian group is a continuous homomorphism
into $\T$.  The word countable includes finite sets.  Closures and interiors are
taken in the displayed ambient space; a superscript is added when
there could be ambiguity.  A Cantor set means a space homeomorphic
to $\{0,1\}^{\N}$.

We use the following standard terminology.  A group is
\emph{precompact} if its completion is compact, or, equivalently,
if it is topologically isomorphic to a subgroup of a compact group.
A space is \emph{Baire} if every countable intersection of dense open
sets is dense. A space is \emph{zero-dimensional} if it has a base
of sets that are both open and closed. A set is nowhere dense if its closure has empty
interior, and is meagre if it is a countable union of nowhere dense
sets.  We use the Baire theorem for compact Hausdorff spaces.
The countable chain condition, abbreviated ccc,
means that every pairwise disjoint family of nonempty open sets is
countable.  Background on topological groups and their completions
can be found in~\cite{AT}.  In particular, every Hausdorff topological
group is Tychonoff, that is, completely regular and Hausdorff.
The compact abelian duality facts used
below are treated in~\cite{Morris}.

Let  $\GL_k(\Z)=\{A\in M_k(\Z):\det A=\pm1\},$
where $M_k(\Z)$ is the set of all $k\times k$ integer matrices,
and the group operation is matrix multiplication.
For any abelian topological group $A$ and any matrix
$U=(u_{ij})\in\GL_k(\Z)$, write
\[
 \Phi_U:A^k\longrightarrow A^k,\qquad
 \Phi_U(x_1,\ldots,x_k)=
 \left(\sum_{j=1}^k u_{ij}x_j\right)_{i=1}^k.
\]
This is a topological automorphism with inverse $\Phi_{U^{-1}}$.
We use the convention $A^0=\{0\}$.

An abelian group $A$ is {\it divisible} if $nA=A$ for every positive
integer $n$. We will use the extension property of divisible groups
repeatedly, including when the homomorphisms are not continuous.

\begin{lemma}\cite[Theorem 1.1.2]{DPS}\label{C:lem:divisible}
Let $B$ be a subgroup of an abelian group $A$, and let $L$ be a
divisible abelian group.  Every homomorphism $u:B\to L$ extends to
a homomorphism $A\to L$.
\end{lemma}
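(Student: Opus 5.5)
The plan is the standard Zorn's lemma argument. I will apply it to partial extensions of $u$, using divisibility of $L$ to push any proper partial extension one step further. Consider the set $\mathcal P$ of all pairs $(C,v)$ with $B\le C\le A$ a subgroup and $v:C\to L$ a homomorphism satisfying $v\res B=u$. Order $\mathcal P$ by $(C,v)\le(C',v')$ iff $C\le C'$ and $v'\res C=v$. The set $\mathcal P$ is nonempty, since $(B,u)\in\mathcal P$. For a chain $\{(C_i,v_i)\}$, the union $C=\bigcup_i C_i$ is a subgroup because the $C_i$ are linearly ordered. The maps $v_i$ are compatible, so they glue to a homomorphism $v:C\to L$, and $(C,v)$ is an upper bound. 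Zorn's lemma then gives a maximal element $(C,v)$.

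It remains to show $C=A$, which is the only step with content. Suppose $a\in A\setminus C$ and put $C'=C+\gen{a}$. Let $I=\{n\in\Z: na\in C\}$; this is a subgroup of $\Z$, so $I=m\Z$ for some $m\ge0$. I will choose $\ell\in L$ as follows:
\begin{itemize}
\item if $m=0$, take $\ell=0$ (any element works);
\item if $m>0$, use divisibility of $L$ to pick $\ell\in L$ with $m\ell=v(ma)$.
\end{itemize}
Then define $v'(c+na)=v(c)+n\ell$ for $c\in C$ and $n\in\Z$.

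The main point is that $v'$ is well defined. Suppose $c+na=c'+n'a$. Then $(n-n')a=c'-c\in C$, so $n-n'=km$ for some integer $k$ (and $n=n'$, $c=c'$ when $m=0$). Hence
\[
v(c')-v(c)=v\bigl(k\,ma\bigr)=k\,v(ma)=k\,m\ell=(n-n')\ell,
\]
which is exactly the required equality $v(c)+n\ell=v(c')+n'\ell$.

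Once well defined, $v'$ is clearly a homomorphism extending $v$. So $(C',v')\in\mathcal P$ strictly dominates $(C,v)$, contradicting maximality. Therefore $C=A$ and $v$ is the desired extension. No continuity is involved anywhere, consistent with the statement's use for abstract homomorphisms.
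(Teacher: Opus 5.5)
Your argument is correct. It is the standard Zorn's lemma proof that divisible groups are injective: a maximal partial extension is pushed across one more cyclic subgroup by solving $m\ell=v(ma)$ in $L$. The paper gives no proof of its own and simply quotes this classical fact from its reference (DPS, Theorem~1.1.2), so your proof supplies the usual argument behind that citation.
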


Let $D=t(\T)=\Q/\Z$ be the torsion subgroup of $\T$.
Both $D$ and $\T$ are divisible.  Extend $\id_D$ to a retraction
$\T\to D$ and fix the resulting algebraic decomposition
\begin{equation}\label{C:eq:circle-decomposition}
 \T=D\oplus W.
\end{equation}
Here $W$ is divisible and torsion-free, hence a vector space over
$\Q$.  Its dimension is $\cc$: its cardinality is $\cc$, and an
infinite-dimensional vector space over the countable field $\Q$
has cardinality equal to the maximum of its dimension and
$\aleph_0$.  Throughout the construction, expressions such as $qx$
with $q\in\Q$ and $x\in W$ refer to this fixed vector space
structure. We do not choose a single-valued division operation on
all of $\T$.

If $E\leq\T$ is divisible and contains $D$, then
\begin{equation}\label{C:eq:divisible-domain}
 E=D\oplus(E\cap W),
\end{equation}
and $E\cap W$ is a $\Q$-subspace of $W$.
To see the assertion about rational multiples, choose in $E$ an
$n$th root of $x\in E\cap W$.  Its difference from the unique
$n$th root of $x$ in $W$ lies in $D\subseteq E$.
It follows in particular that every countable subgroup of $\T$
is contained in a countable divisible subgroup containing $D$:
take the $\Q$-span of its $W$-components and adjoin $D$.

For a subgroup $I\leq\T$, write
\[
 \Sat(I)=\{t\in\T:nt\in I\text{ for some integer }n\geq1\}.
\]
If $I$ is divisible, then
\begin{equation}\label{C:eq:saturation}
 \Sat(I)=I+D.
\end{equation}
Indeed, if $nt\in I$, choose $a\in I$ with $na=nt$; then
$t-a\in D$.  The reverse inclusion is immediate.  Thus, for a
divisible $I$, the condition $t\notin I+D$ says exactly that
$t+I$ has infinite order in $\T/I$.

\begin{lemma}\label{lem:closed}
If every countable subgroup of a Hausdorff abelian group $A$ is
$h$-embedded, then every countable subgroup of $A$ is closed.
\end{lemma}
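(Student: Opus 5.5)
Let $C\le A$ be a countable subgroup and $x\in\cl_A C$. The goal is to show $x\in C$. The idea is to find a countable subgroup that contains both $C$ and $x$, use $h$-embeddedness to separate $x$ from $C$ by a single character, and then let continuity contradict $x\in\cl C$.

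First put $C'=C+\gen{x}$. This subgroup is still countable, so by hypothesis it is $h$-embedded in $A$. Now suppose $x\notin C$. Then the quotient $C'/C$ is a nontrivial cyclic group generated by $x+C$.

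Next we need an abstract homomorphism $\psi:C'/C\to\T$ with $\psi(x+C)\neq0$.
\begin{itemize}
\item If $x+C$ has infinite order, send it to any nonzero point of $\T$.
\item If $x+C$ has order $n\ge2$, send it to $1/n+\Z$.
\end{itemize}
This homomorphism is well defined because $\T$ contains elements of every order. Alternatively, define it on the cyclic group as above and extend using Lemma~\ref{C:lem:divisible}, since $\T$ is divisible.

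Compose $\psi$ with the quotient map $C'\to C'/C$ to get $u\in\Hom(C',\T)$. By construction $u\res C=0$ and $u(x)\neq0$. Since $C'$ is $h$-embedded, $u$ extends to a continuous character $\chi$ of $A$. Its kernel $\chi^{-1}(0)$ is closed and contains $C$, hence contains $\cl_A C$, and so contains $x$. This contradicts $\chi(x)=u(x)\neq0$. Therefore $x\in C$, and $C$ is closed.

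The argument is short. The one step needing care is that $C'=C+\gen{x}$ must itself be countable, so that $h$-embeddedness applies to it. Hausdorffness of $A$ plays no role beyond the standing convention.
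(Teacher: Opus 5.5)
Your proof is correct and is essentially the paper's argument: the paper likewise adjoins $x\notin D$ to the countable subgroup, takes a homomorphism of the cyclic quotient $(D+\Z x)/D$ into $\T$ that is nonzero at $x+D$, and extends its pullback to a continuous character whose closed kernel contains $D$ but not $x$. Your version via $x\in\cl_A C$ and a contradiction is only a rephrasing of this direct separation argument.
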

\begin{proof}
Let $D\leq A$ be countable and let $x\in A\setminus D$. The cyclic
group $(D+\Z x)/D$ admits a homomorphism into $\T$ which is nonzero
at $x+D$. Its pullback to $D+\Z x$ extends to a continuous character
of $A$. The kernel of this character contains $D$ and excludes $x$.
Thus $D$ is closed.
\end{proof}

\begin{lemma}\label{lem:quotient-obstruction}
Let $A$ be an uncountable Hausdorff abelian group whose countable
subgroups are closed and whose uncountable subgroups are dense. Then
every nontrivial Hausdorff quotient of $A$ is nonseparable.
\end{lemma}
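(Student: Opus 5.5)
Let $\pi:A\to A/N$ be a nontrivial Hausdorff quotient, so $N$ is a closed proper subgroup of $A$, and suppose, for a contradiction, that $A/N$ is separable. Fix a countable dense set $\{\pi(x_n):n\in\N\}$ in $A/N$ and put $C=\gen{x_n:n\in\N}$, a countable subgroup of $A$. The plan is to show that $N+C$ is dense in $A$, and then to derive a contradiction by a case analysis on whether $N$ is countable or uncountable.

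First, $\pi(C)$ contains the dense set $\{\pi(x_n)\}$, so it is dense in $A/N$. Because $\pi$ is open, the preimage of a dense set is dense: a nonempty open $U\subseteq A$ has open image $\pi(U)$, which meets $\pi(C)$, so $U$ meets $\pi^{-1}(\pi(C))=N+C$. Thus $N+C$ is dense in $A$.

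\emph{Case 1: $N$ is countable.} Then $N+C$ is countable, hence closed by hypothesis. Being also dense, it equals $A$, which contradicts the uncountability of $A$.

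\emph{Case 2: $N$ is uncountable.} Then $N$ is an uncountable subgroup, hence dense. It is also closed, so $N=A$, and the quotient is trivial, a contradiction.

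Both cases fail, so $A/N$ is nonseparable. There is no real obstacle here. The only points needing care are that the openness of quotient maps (guaranteed by the paper's convention) makes preimages of dense sets dense, and that Hausdorffness of the quotient forces $N$ to be closed, which is what allows the ``uncountable implies dense'' hypothesis to give $N=A$.
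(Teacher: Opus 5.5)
Your proposal is correct and is essentially the paper's argument. The paper first notes that the proper closed subgroup $N$ must be countable, since an uncountable one would be dense and hence all of $A$; it then takes a countable dense subgroup $D$ of the quotient and shows that $q^{-1}(D)$ is countable, closed and dense. Your case split and your $N+C$ are the same reasoning arranged slightly differently.
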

\begin{proof}
Let $q:A\to H$ be an open continuous surjection with $H\neq\{0\}$.
The proper closed subgroup $N=\ker q$ is countable. If $H$ were
separable, it would contain a countable dense subgroup $D$. Then
$q^{-1}(D)$ would be countable, hence closed in $A$. It is also dense,
since every nonempty open subset of $A$ has nonempty open image meeting
$D$. This would imply $q^{-1}(D)=A$, contrary to the uncountability
of $A$.
\end{proof}
\begin{lemma}\label{B:lem:countable}
Let $A$ be an abelian topological group in which every countable subgroup
is $h$-embedded. Then every compact subset of $A$ is finite.
\end{lemma}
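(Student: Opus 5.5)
Let $K\subseteq A$ be compact; we argue by contradiction, assuming $K$ is infinite. The plan is to find a countably infinite subset of $K$ which is closed and discrete in $A$. Such a set cannot lie in a compact set, since a closed subset of $K$ is compact and an infinite discrete space is not compact.

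First we choose a suitable countable set. Since $K$ is infinite, pick distinct points $x_1,x_2,\ldots\in K$. Let $D=\gen{x_n:n\in\N}$, which is a countable subgroup. By Lemma~\ref{lem:closed}, $D$ is closed in $A$. Hence $K\cap D$ is a compact subset of $D$. So it suffices to show that every compact subset of $D$ is finite, and we may now work inside $D$.

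The key step uses $h$-embeddedness. The topology of $A$ restricted to $D$ is at least as fine as the topology of $D$ induced by all of $\Hom(D,\T)$. The reason is that every abstract homomorphism $D\to\T$ extends to a continuous character of $A$, so it is continuous on $D$. Thus the identity map from $D$ (with the subspace topology) to $D^{\#}$ is continuous. Here $D^{\#}$ denotes $D$ with the Bohr topology, i.e.\ the weak topology generated by $\Hom(D,\T)$. Consequently the compact set $K\cap D$ remains compact in $D^{\#}$.

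It therefore remains to invoke or prove that compact subsets of a countable group in its Bohr topology are finite. This is the main obstacle. The general statement (Glicksberg, or van Douwen's theorem that $D^{\#}$ has no infinite compact sets) is standard, but the countable case can be done directly, as follows. An infinite compact subset $C$ of the countable Hausdorff space $D^{\#}$ would be a countable compact Hausdorff space, hence metrizable. It would then contain a nontrivial convergent sequence $y_n\to y$ of distinct points. Translating, we get distinct $z_n=y_n-y\neq0$ with $z_n\to0$ in $D^{\#}$. We then construct a homomorphism $\chi:D\to\T$ with $\chi(z_{n_k})\notin(-1/4,1/4)$ along a subsequence, which contradicts convergence.

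For the construction, pass to a subsequence so that each $z_{n_{k+1}}\notin\gen{z_{n_1},\ldots,z_{n_k}}$. This is possible because each finitely generated subgroup contains only finitely many of the $z_n$ up to a subsequence issue. If instead infinitely many $z_n$ lie in one finitely generated subgroup $F$, use the fact that the Bohr topology of $F\cong\Z^r\oplus$(finite) is not that of a group with nontrivial convergent sequences to distinct nonzero points; this can be checked via characters of $\Z^r$ by a similar argument. Then define $\chi$ recursively on $F_k=\gen{z_{n_1},\ldots,z_{n_k}}$. The point $z_{n_{k+1}}+F_k$ is nonzero in $F_{k+1}/F_k$, so it has order $m\geq2$ or infinite order. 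In either case the extension of $\chi$ can be chosen so that $\chi(z_{n_{k+1}})$ lies in the arc $[1/4,3/4]$. The coset of possible values is $\chi$-determined up to $\tfrac1m\Z$-shifts, or is free, and when $m=2$ we can hit $1/2$ or adjust. Finally extend $\chi$ to all of $D$ by Lemma~\ref{C:lem:divisible}. The delicate point is the arc choice when $m$ is small; I expect to handle it by allowing the arc $\T\setminus(-1/6,1/6)$ in place of $[1/4,3/4]$.
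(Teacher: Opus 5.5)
There is a genuine gap, in the case you deferred. Your reduction is the same as the paper's. You pass to the countable subgroup $D$ and use Lemma~\ref{lem:closed} to keep $K\cap D$ compact. You note that $h$-embeddedness makes every element of $\Hom(D,\T)$ continuous on $D$, and you extract distinct $z_n\neq0$ with $h(z_n)\to0$ for every $h\in\Hom(D,\T)$.

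The gap is in refuting this last statement. Your sentence that a finitely generated subgroup contains only finitely many $z_n$ is false; take $D=\Z$ and any sequence of distinct integers. So the case you split off carries the real content of the lemma: infinitely many $z_n$ lying in a finitely generated $F\cong\Z^r\oplus T$. It already contains the basic instance $D=\Z$, and it is not settled by ``a similar argument.'' Your recursion works only because each new $z_{n_{k+1}}$ escapes $\gen{z_{n_1},\ldots,z_{n_k}}$, which lets you choose $\chi$ there. A homomorphism on $F$ is fixed by finitely many values, so that freedom disappears and an analytic input is needed.

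The paper supplies this input uniformly for every countable $D$, with no case split. On the compact group $L=\Hom(D,\T)$ with Haar measure $\mu$, each map $h\mapsto\exp(2\pi i h(z_n))$ is a nontrivial character, so its integral is $0$. Dominated convergence, however, forces these integrals to tend to $1$.

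To keep your route instead, handle $F$ directly:
\begin{enumerate}
\item Pick a $\Z^r$-coordinate that is unbounded along those $z_n$; one exists because $T$ is finite.
\item Pass to a subsequence whose values $a_k$ in that coordinate satisfy $|a_{k+1}|\geq4|a_k|>0$.
\item Use nested intervals of length $1/(2|a_k|)$ to find a real $\theta$ with $a_k\theta\in[1/4,3/4]+\Z$ for all $k$.
\item Extend the homomorphism $F\to\T$ given by $\theta$ times that coordinate to $D$ by Lemma~\ref{C:lem:divisible}.
\end{enumerate}
Citing Glicksberg's theorem outright would also close the gap, but then your direct argument is unnecessary.

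The point you flagged as delicate is not a problem. When $z_{n_{k+1}}+F_k$ has order $m\geq2$, the admissible values of $\chi(z_{n_{k+1}})$ form a coset of $\frac1m\Z/\Z$. The closed arc $[1/4,3/4]$ has length $1/2\geq1/m$, so it always meets that coset. Your escaping case therefore works as written with the original arc.
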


\begin{proof}
Every countable subgroup of $A$ is closed by Lemma~\ref{lem:closed}.

Suppose that $C\subseteq A$ is infinite and compact. Choose a countably
infinite subset $S\subseteq C$ and put $D=\gen{S}$. The space
$C\cap D$ is infinite, countable and compact Hausdorff. Every countable
compact Hausdorff space is metrizable: choose one continuous real-valued
function separating each pair of distinct points and use the resulting
embedding into $[0,1]^\N$. Hence $C\cap D$ contains distinct points
$d_n$ converging to a point $d\in D$. After omitting at most one term,
we have $e_n=d_n-d\neq0$ for every $n$.

Consider the compact group
\[
 L=\Hom(D,\T)\subseteq\T^D
\]
with its pointwise topology and normalized Haar measure $\mu$.
Every $h\in L$ is continuous on $D$, since $D$ is $h$-embedded in
$A$. It follows that $h(e_n)\to0$ for every $h\in L$.
On the other hand, the function
$h\mapsto\exp(2\pi i h(e_n))$ is a nontrivial character of $L$.
Haar orthogonality and dominated convergence give
\[
 0=\lim_{n\to\infty}\int_L\exp(2\pi i h(e_n))\,d\mu(h)
   =\int_L1\,d\mu(h)=1,
\]
a contradiction.
\end{proof}

For a set $I$, let $\Z^{(I)}$ denote the group of finitely supported
integer families indexed by $I$.  A family $s=(s_i)\in\Z^{(I)}$
defines a character of $\T^I$ by
\begin{equation}\label{C:eq:integer-character}
 \chi_s(z)=\sum_{i\in I}s_i z_i.
\end{equation}
Every continuous character of $\T^I$ has this form.  One way to
see the finite-support assertion is to choose an arc about zero in
$\T$ containing no nontrivial subgroup.  Continuity at zero implies
that the image of a coordinate tail subgroup is contained in this
arc, and therefore is zero.  On the remaining finite product one
uses the familiar fact that the continuous characters of $\T$
are its integer multiples.

For a subgroup $M\leq\T^I$, compact abelian duality gives
\begin{equation}\label{C:eq:annihilator}
 \cl_{\T^I}M
 =\bigcap\{\ker\chi_s:s\in\Z^{(I)},\ \chi_s(M)=\{0\}\}.
\end{equation}
This follows by applying character separation to the compact
quotient by $\cl M$.  In particular, $M$ is dense precisely when
no nonzero integer character vanishes on it.  A continuous
character of a dense subgroup extends uniquely to the compact
completion: a continuous homomorphism is uniformly continuous,
and $\T$ is complete.  These observations explain why estimates
for all finite integer combinations of our coordinates will
control every continuous character of the group constructed below.

A nonzero $s\in\Z^{(I)}$ is \emph{primitive} if the greatest common
divisor of its coefficients is one.  An integer change of its
finitely many active coordinates sends a primitive character to a
coordinate projection.  Consequently, $\ker\chi_s$ is connected
for primitive $s$, and its annihilator is $\Z s$.  In general, a
fiber of a nonzero integer character is a finite disjoint union of
fibers of a primitive character.  Every nonzero integer character
is an open surjection onto $\T$; its fibers are closed and nowhere
dense.

\begin{lemma}\label{C:lem:finite-cover}
Let $I=I'\cup\{m\}$, where $m\notin I'$, let $s\in\Z^{(I)}$ satisfy $s_m\neq0$.
For $t\in\T$, put $L=\chi_s^{-1}(t)$, where $\chi_s$ is
defined by~\eqref{C:eq:integer-character}. The restriction to $L$ of the coordinate
projection $\pi:\T^I\to\T^{I'}$ is a finite covering map.
If $F$ is closed and nowhere dense in $L$, then $\pi(F)$ is closed
and nowhere dense in $\T^{I'}$.
\end{lemma}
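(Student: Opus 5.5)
The plan is to describe $L$ explicitly as a graph-like set over $\T^{I'}$ and read both assertions off that description. Write $z=(z',z_m)$ with $z'\in\T^{I'}$. Put $s'=s\res I'$ and $n=s_m\neq0$. Then
\[
 L=\{(z',z_m): n z_m = t-\chi_{s'}(z')\}.
\]
For each $z'$ the fiber $\pi^{-1}(z')\cap L$ consists of the $|n|$ solutions $z_m$ of $nz_m=t-\chi_{s'}(z')$. These form a coset of the cyclic group $\frac1{|n|}\Z/\Z$. So $\pi\res L$ is surjective with fibers of constant cardinality $|n|$.

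To show it is a covering map, I will produce local sections. Fix $z'_0$ and choose a neighbourhood $V$ of $z'_0$ in $\T^{I'}$ on which the continuous map $z'\mapsto t-\chi_{s'}(z')$ takes values in a small open arc $J\subseteq\T$ of length less than $1/2$. Since $\chi_{s'}$ depends on finitely many coordinates, $V$ may be taken to be a basic open set. Over the arc $J$, the map $y\mapsto ny$ admits $|n|$ continuous inverse branches $\sigma_0,\dots,\sigma_{|n|-1}$ with pairwise disjoint images. Lifting $J$ to an interval in $\R$, these are $y\mapsto (\tilde y+k)/n$. Hence
\[
 L\cap\pi^{-1}(V)=\bigsqcup_{k}\{(z',\sigma_k(t-\chi_{s'}(z'))):z'\in V\}.
\]
Each piece is open in $L\cap\pi^{-1}(V)$, because the images of the $\sigma_k$ are disjoint open arcs. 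Each piece is mapped homeomorphically onto $V$ by $\pi$, with inverse given by the continuous section. This proves that $\pi\res L$ is a finite covering map.

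For the second assertion, first note that $L$ is compact, so $F$ is compact and $\pi(F)$ is compact, hence closed. For nowhere density, suppose $\pi(F)$ contains a nonempty open set $U$. Shrink $U$ to lie inside some evenly covered $V$ as above. Then
\[
 U\subseteq \pi(F)\cap U=\bigcup_k \pi\bigl(F\cap S_k\bigr),
\]
where $S_k$ are the sheets over $U$. Each $\pi(F\cap S_k)$ is closed in $U$, since $\pi\res S_k$ is a homeomorphism onto $U$ and $F\cap S_k$ is closed in $S_k$. By the Baire theorem for the compact-open setting, or simply because a finite union of closed sets with empty interior has empty interior, some $\pi(F\cap S_k)$ contains a nonempty open $U'\subseteq U$. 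Pulling back through the homeomorphism $\pi\res S_k$, the set $F$ contains the nonempty open subset $(\pi\res S_k)^{-1}(U')$ of $S_k$. This set is open in $L$, which contradicts the assumption that $F$ is nowhere dense in $L$.

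The only mildly delicate step is producing the local branches of division by $n$ together with the disjointness of the sheets. That step is handled by lifting to $\R$ on an arc of length less than $1/2$; the rest is routine.
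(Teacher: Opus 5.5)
Your proof is correct and follows essentially the same route as the paper. Both arguments use the $|s_m|$ local inverse branches of $y\mapsto s_my$ on a small arc to make the sheets graphs over an evenly covered neighbourhood, and then get closedness of $\pi(F)$ from compactness and empty interior from the fact that a finite union of closed nowhere dense sheet images has empty interior.
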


\begin{proof}
Writing $a(z)=\sum_{i\in I'}s_i z_i$, the equation defining $L$ is
$s_m z_m=t-a(z)$.  Near any point of $\T^{I'}$, the right-hand side
lies in a sufficiently small arc of $\T$, on which the map
$y\mapsto s_m y$ admits $|s_m|$ disjoint continuous local inverses.
Their graphs give an evenly covered neighborhood.

The image $\pi(F)$ is closed because $F$ is compact.  In an evenly
covered neighborhood its image is the union of finitely many
closed nowhere dense sets, one from each covering sheet.  Such a
union has empty interior.  Thus $\pi(F)$ has empty interior
everywhere.
\end{proof}

We use only the following cardinal arithmetic facts:
\begin{equation}\label{C:eq:cardinals}
 \cc^{\aleph_0}=\cc,
 \qquad \operatorname{cf}(\cc)>\aleph_0.
\end{equation}
Here $\operatorname{cf}(\kappa)$ denotes the least cardinality of an
unbounded subset of the ordinal $\kappa$.  The second assertion is a
consequence of K\"onig's theorem; see~\cite{Jech}.
Thus every countable subset of the ordinal $\cc$ is bounded in
$\cc$.  A countable product of circles has a countable base and
at most $\cc$ closed subsets.  The set of pairs consisting of a
countable subgroup $C\leq\T$ and a homomorphism $C\to\T$ also has
cardinality $\cc$.  There are at most $\cc$ such pairs by the first
identity in~\eqref{C:eq:cardinals}, and already an infinite cyclic
subgroup has $\cc$ homomorphisms into $\T$.
No regularity of $\cc$ is assumed.

A compact group is ccc.  Indeed, every nonempty open set has
positive Haar measure, since finitely many of its translates cover
the group.  A pairwise disjoint family of positive-measure open
sets must be countable.

We say that $A\subseteq\T^I$ \emph{depends on} $J\subseteq I$ if
$A=p_J^{-1}(A')$ for some $A'\subseteq\T^J$.  A basic open cylinder
depends on finitely many coordinates.  Coordinate projections
are open, and hence taking the closure or the interior of a
cylinder preserves its coordinate support.  An open set $O$ is regular open
if $O=\Int\cl O$; a set is regular closed if it is the closure of
an open set.

\begin{lemma}\label{C:lem:countable-support}
Let $K=\T^I$.
\begin{enumerate}
\item Every regular open subset of $K$ depends on countably many
coordinates.
\item Every dense open subset of $K$ contains a dense open subset
which is a countable union of basic open cylinders.
\item If $G\leq K$ is dense and every countable coordinate projection
of $G$ is connected, then $G$ is connected.
\end{enumerate}
\end{lemma}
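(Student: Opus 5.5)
For (i) I plan to use the ccc property of $K$ (compact groups are ccc, as noted above) in the standard way. Let $O$ be regular open. Zorn's lemma gives a maximal pairwise disjoint family $\mathcal B$ of basic open cylinders contained in $O$. By ccc, $\mathcal B$ is countable, so the set $J$ of coordinates on which some member of $\mathcal B$ depends is countable. Put $U=\bigcup\mathcal B$; it depends on $J$. By maximality, $U$ is dense in $O$: a basic cylinder inside $O\setminus\cl U$ could be added to $\mathcal B$. Hence $\cl O=\cl U$ and $O=\Int\cl O=\Int\cl U$. Closure and interior preserve coordinate support, since the projections are open. Thus $\Int\cl U$ depends on $J$, and so does $O$.

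For (ii), take a dense open $V$ and run the same argument, with a maximal disjoint family $\mathcal B$ of basic cylinders inside $V$. The family is countable by ccc. Its union is open and contained in $V$. By maximality it is dense in $V$, hence dense in $K$. This countable union of basic cylinders is the required subset.

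For (iii), suppose toward a contradiction that $G=P\cup Q$ with $P,Q$ disjoint, nonempty, and relatively open. Write $P=G\cap O_1$ and $Q=G\cap O_2$ with $O_1,O_2$ open in $K$. Since $G$ is dense and $O_1\cap O_2\cap G=\emptyset$, we get $O_1\cap O_2=\emptyset$. Replace $O_i$ by $\Int\cl O_i$. These are still disjoint: if $\Int\cl O_1$ met $O_2$, then the open set $O_2$ would meet $\cl O_1$, hence meet $O_1$. So $\Int\cl O_1\cap\cl O_2$ has empty interior, and an open set with that property meets the open $\Int\cl O_2$ trivially. They still contain $P$ and $Q$, and they are regular open. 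By (i), both depend on a common countable $J\subseteq I$: $\Int\cl O_i=p_J^{-1}(O_i')$ with $O_i'$ open and disjoint in $\T^J$, using openness of $p_J$. Then $p_J(G)\subseteq p_J(P)\cup p_J(Q)\subseteq O_1'\cup O_2'$ with both pieces nonempty. This disconnects $p_J(G)$, contradicting the hypothesis.

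The main point to check is that $G\subseteq\Int\cl O_1\cup\Int\cl O_2$, which is immediate since $O_i\subseteq\Int\cl O_i$. The expected obstacle is the disjointness of the regularizations. That is the step handled above by density of $G$ and the regular-open argument.
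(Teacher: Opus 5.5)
Your proposal is correct and follows the paper's proof: (i) and (ii) come from a maximal disjoint family of basic cylinders, ccc, and the fact that closure and interior preserve coordinate support, and (iii) regularizes the separating open sets, puts them on a common countable $J$ via (i), and separates $p_J(G)$. By density of $G$, your sets $\Int\cl O_1$ and $\Int\cl O_2$ coincide with the paper's $K\setminus\cl B$ and $K\setminus\cl A$ (your $P,Q$ playing the roles of $A,B$); the only difference is that the paper gets their disjointness in one line from $\cl A\cup\cl B=\cl G=K$.
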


\begin{proof}
Inside an open set $O$, choose a maximal pairwise disjoint family
of nonempty basic open cylinders.  This family is countable by
ccc.  Its union $V$ is dense in $O$, since otherwise an additional
basic cylinder could be inserted.  If $O$ is regular open, then
$O=\Int_K\cl_K V$, which proves (1).  If $O$ is dense in $K$,
then $V$ is dense in $K$, proving (2).

For (3), suppose $G=A\cup B$ such that $A\cap B=\emptyset$ and $A, B$ are nonempty
clopen sets in $G$.  Put
\[
 U=K\setminus\cl_K B,
 \qquad V=K\setminus\cl_K A.
\]
Density of $G$ gives $U\cap V=\varnothing$, and relative openness
gives $U\cap G=A$ and $V\cap G=B$.  In particular,
$G\subseteq U\cup V$.  If $A=G\cap O$ with $O$ open in $K$, then
$\cl_K A=\cl_K O$; thus $\cl_K A$ is regular closed.  The same holds
for $B$, so $U,V$ are regular open.  By (1), both depend on one
common countable set $J$ of coordinates.  Their projections give
a nontrivial separation of $p_J(G)$, a contradiction.
\end{proof}

\begin{lemma}\label{B:lem:baire}
Let $\Gamma$ be nonempty and let $A$ be a dense subgroup of
$K=\T^\Gamma$. Suppose that
\begin{equation}\label{B:eq:category}
 p_J(A)\cap R\neq\varnothing,
\end{equation}
 whenever $J\subseteq\Gamma$ is nonempty and countable and
$R\subseteq\T^J$ is a dense $G_\delta$-set, where $p_J:\T^\Gamma\to\T^J$ is the
coordinate projection. Then $A$ is Baire.
\end{lemma}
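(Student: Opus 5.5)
We will show directly that a countable family of dense open subsets of $A$ has dense intersection. Let $(O_n)_{n\in\N}$ be dense open subsets of $A$, and let $V$ be a nonempty open subset of $A$; we must find a point of $A\cap V\cap\bigcap_n O_n$.

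The first step is to pass to the compact group. Each $O_n=A\cap U_n$ with $U_n$ open in $K$. Since $A$ is dense in $K$ and $O_n$ is dense in $A$, the set $U_n$ is dense in $K$. By Lemma~\ref{C:lem:countable-support}(2), $U_n$ contains a dense open set $U_n'$ that is a countable union of basic open cylinders. Hence $U_n'$ depends on a countable set $J_n\subseteq\Gamma$, so $U_n'=p_{J_n}^{-1}(W_n)$ with $W_n=p_{J_n}(U_n')$ open. Shrinking $V$ if necessary, we may take $V=A\cap p_{J_0}^{-1}(W_0)$ for a basic cylinder $W_0$ on a finite set $J_0$. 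If everything happens to be finite and empty we simply add one coordinate, which is harmless because $\Gamma\neq\varnothing$.

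Next put $J=\bigcup_{n\ge0}J_n$. This set is countable and nonempty, and we write $U_n'=p_J^{-1}(W_n')$ with $W_n'$ open in $\T^J$. Each $W_n'$ is dense in $\T^J$, because $p_J$ is an open surjection and the preimage of $W_n'$ is dense. The set $R=W_0'\cap\bigcap_{n\ge1}W_n'$ is a $G_\delta$ in $\T^J$, but it is not dense, since $W_0'$ is only open.

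This non-density is the main obstacle. To deal with it, we set
\[
R'=\bigcap_{n\ge1}W_n'\cap\bigl(W_0'\cup(\T^J\setminus\cl W_0')\bigr).
\]
This is a dense $G_\delta$-set, because $W_0'\cup(\T^J\setminus\cl W_0')$ is dense open. The hypothesis~\eqref{B:eq:category} applies only to this global set, so we still have to localize the point we obtain into $W_0'$. For that we use the group structure and translation. Since $A$ is dense, we can pick $a\in A$ with $p_J(a)\in W_0'$. Choose an open $N\ni 0$ in $\T^J$ such that $p_J(a)+N\subseteq W_0'$. Now consider the countably many translates of $R'$ by elements $p_J(b)$, with $b$ ranging over a countable subset $B\subseteq A$ such that $p_J(B)$ is dense in $\T^J$. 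Such a $B$ exists because $p_J(A)$ is dense in the separable metrizable space $\T^J$.

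The remaining point is to choose the dense $G_\delta$-set cleverly so that any hit forces membership in the translated window. Take
\[
R''=\bigcap_{b\in B}\bigl(R'-p_J(b)\bigr)\cap\bigcap_{b\in B}\bigl(p_J(b)+\ldots\bigr).
\]
More concretely, let
\[
S=\bigcap_{b\in B}\Bigl(\bigcap_{n\ge1}(W_n'-p_J(b))\Bigr),
\]
which is a dense $G_\delta$, and apply~\eqref{B:eq:category} to $S\cap\bigl((-p_J(a)+N)\cup(\T^J\setminus\cl(-p_J(a)+N))\bigr)$. If we only get a point $x\in A$ with $p_J(x)$ outside the window, we select $b\in B$ such that $p_J(x)+p_J(b)$ is close to $p_J(a)$; this is possible by the density of $p_J(B)$. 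Then $y=x+b\in A$ satisfies $p_J(y)\in W_0'$ and $p_J(y)\in W_n'$ for all $n$, by the choice of $S$. Hence $y\in V\cap\bigcap_n O_n$, which proves that $A$ is Baire.
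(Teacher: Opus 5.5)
Your argument is correct once you strip out the dead ends, but it takes a different route from the paper.

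\textbf{What to delete.} The sets $R'$ and $R''$ (the latter left with an ellipsis) are never used. The same goes for the auxiliary $a$ and $N$, the intersection with $(-p_J(a)+N)\cup(\T^J\setminus\cl(-p_J(a)+N))$, and the case split ``if $p_J(x)$ is outside the window.'' A point with $p_J(x)\in -p_J(a)+N$ is not thereby in $W_0'$, so as written that case is left dangling.

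\textbf{What actually carries the proof.} The set $S=\bigcap_{b\in B}\bigcap_{n\ge1}(W_n'-p_J(b))$ is a countable intersection of dense open subsets of the compact metrizable space $\T^J$, hence a dense $G_\delta$-set. The hypothesis gives $x\in A$ with $p_J(x)\in S$. For \emph{every} such $x$, the set $W_0'-p_J(x)$ is nonempty and open. Density of $p_J(B)$ then gives $b\in B$ with $p_J(x+b)\in W_0'$. By the definition of $S$, also $p_J(x+b)\in W_n'$ for all $n$. Hence $x+b\in V\cap\bigcap_n O_n$, and no case distinction is needed.

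\textbf{Comparison with the paper.} The paper uses the hypothesis only to show that $A$ is not meagre in $K$, with the same reduction to a countable $J$ via Lemma~\ref{C:lem:countable-support}(ii). It then argues by contradiction. A nonempty open $U\subseteq A$ that is meagre in $A$ would be meagre in $K$, since nowhere dense subsets of the dense set $A$ are nowhere dense in $K$. Precompactness lets finitely many translates of $U$ cover $A$, which would make $A$ meagre in $K$.

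You instead localize directly. You build invariance under countably many translates, by a countable $B\subseteq A$ with $p_J(B)$ dense, into the $G_\delta$-set before invoking the hypothesis. Your version is self-contained and constructive. It needs only separability of $\T^J$ and the group operation, not precompactness or the transfer of nowhere dense sets from $A$ to $K$. The paper's version isolates a standard, reusable fact: for precompact groups, the Baire property is equivalent to non-meagreness in the completion, as cited from Bruguera--Tkachenko.
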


\begin{proof}
First we show that $A$ is not meagre in $K$. Let $F_n$, $n\in\N$,
be closed nowhere dense subsets of $K$. By
Lemma~\ref{C:lem:countable-support}(ii), each $K\setminus F_n$ contains
a dense open set $W_n$ that is a countable union of basic cylinders.
Together, these cylinders involve only countably many coordinates.
Choose a nonempty countable $J$ containing
these coordinates. We may write
\[
 W_n=p_J^{-1}(V_n),
\]
where each $V_n$ is dense and open in $\T^J$. By
\eqref{B:eq:category}, some point of $A$ belongs to
$p_J^{-1}(\bigcap_n V_n)$ and hence avoids $\bigcup_n F_n$.
Since every meagre set is contained in a countable union of closed
nowhere dense sets, $A$ is not meagre in $K$.

If $A$ were not Baire, it would have a nonempty open subset $U$ which
is meagre in $A$. Since $A$ is dense in $K$, the closure in $K$ of
each nowhere dense subset of $A$ is nowhere dense in $K$. Consequently
$U$ would be meagre in $K$. Precompactness implies that finitely many
translates of $U$ cover $A$, making $A$ meagre in $K$, a contradiction.
\end{proof}

The last paragraph is the usual equivalence between the Baire property
of a precompact group and non-meagreness in its completion; see
Bruguera and Tkachenko~\cite{BT}. Notice that
\eqref{B:eq:category} concerns dense $G_\delta$-sets. Meeting all
nonempty $G_\delta$-sets would be a substantially stronger requirement.

Let $\widehat A$ denote the group of continuous characters $A\to\T$.
We write $\widehat A_p$ for this group with the topology of pointwise
convergence on $A$, and $\widehat A_c$ for the compact-open topology,
that is, the topology of uniform convergence on compact subsets of $A$.
The group $A$ is \emph{Pontryagin reflexive} if the evaluation map
\[
 \alpha_A:A\longrightarrow\widehat{\widehat A_c}_c,
 \qquad \alpha_A(a)(\chi)=\chi(a),
\]
is a topological isomorphism.

For a precompact abelian group, pointwise duality gives a canonical
topological isomorphism
\begin{equation}\label{B:eq:pointwise-duality}
 A\cong\widehat{\widehat A_p}_p;
\end{equation}
see Comfort and Ross~\cite{CR}, and the explicit account in
\cite{FHST}. This identity alone is not a statement
of Pontryagin reflexivity, because compact-open and pointwise topologies
need not agree. We use the following theorem in precisely that distinction.

\begin{theorem}[Ferrer--Hern\'andez--Sep\'ulveda--Trigos-Arrieta
{\cite{FHST}}]\label{B:thm:FHST}
Let $A$ be a precompact abelian group. If $\widehat A_p$ is Baire,
then every compact subset of $A$ is finite.
\end{theorem}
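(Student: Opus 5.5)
The plan is to argue by contradiction. Suppose $C\subseteq A$ is an infinite compact set. As in the proof of Lemma~\ref{B:lem:countable}, pass to a countable piece. Choose a countably infinite $S\subseteq C$. Then $C\cap\cl\gen{S}$ is still infinite and compact, and inside the countable subgroup one extracts distinct points $d_n\to d$. Put $e_n=d_n-d\neq0$. These are distinct elements of $A$ with $e_n\to0$, and $K=\{0\}\cup\{e_n:n\in\N\}$ is a compact subset of $A$. Let $X=\widehat A_p$ and consider the evaluation map
\[
 X\times K\to\T,\qquad (\chi,x)\mapsto\chi(x).
\]
It is continuous in $\chi$ for fixed $x$, by the definition of the pointwise topology. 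It is continuous in $x$ for fixed $\chi$, since each $\chi$ is a continuous character. So it is separately continuous on the product of a Baire space and a compact space.

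\textbf{Key step (main obstacle).} We need joint continuity of this map at some point $(\chi_0,0)$. I would obtain it from a Namioka-type theorem. For separately continuous maps on $X\times K$ with $X$ Baire and $K$ compact metrizable (here $K$ is countable and compact), there is a dense $G_\delta$ set of $\chi_0\in X$ at which the map is jointly continuous at every $(\chi_0,x)$. This is the version due to Christensen and Saint-Raymond, with metrizable $K$. I would verify that this version applies, or reprove the needed special case directly: for each $\varepsilon$ and $m$, the sets $\{\chi:|\chi(e_n)-\chi(e_{n'})|\le\varepsilon\ \text{for all } n,n'\ge m\}$ are closed and cover $X$, so Baire yields interior points. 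Next, translate by $-\chi_0$ in the group $X$ and use additivity. For every $\varepsilon>0$ this gives a pointwise neighbourhood
\[
 U_\varepsilon=\{\chi:\|\chi(f)\|<\delta\ \text{for } f\in F_\varepsilon\},\qquad F_\varepsilon\subseteq A\text{ finite},
\]
and an index $n_\varepsilon$ such that $\|\chi(e_n)\|\le\varepsilon$ for all $\chi\in U_\varepsilon$ and $n\ge n_\varepsilon$. Thus $U_\varepsilon$ is uniformly small on the tail of $(e_n)$.

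\textbf{Finishing by Haar measure.} By \eqref{B:eq:pointwise-duality}, the completion of $\widehat A_p$ is the compact group $L=\Hom(A,\T)$ with the pointwise topology; let $\mu$ be its Haar measure. Let $P_\varepsilon$ be the closure of $U_\varepsilon$ in $L$. It contains a nonempty open set of $L$, so $\mu(P_\varepsilon)>0$. By pointwise continuity of evaluation at $e_n$, we still have $\|h(e_n)\|\le\varepsilon$ for all $h\in P_\varepsilon$ and $n\ge n_\varepsilon$. The maps $h\mapsto\exp(2\pi i h(e_n))$ are pairwise distinct characters of $L$, because the $e_n$ are distinct. By Bessel's inequality (Riemann--Lebesgue on $L$),
\[
 \int_L \mathbf 1_{P_\varepsilon}(h)\,\exp\bigl(2\pi i h(e_n)\bigr)\,d\mu(h)\longrightarrow0 .
\]
However, for $n\ge n_\varepsilon$ the real part of this integral is at least $\mu(P_\varepsilon)\cos(2\pi\varepsilon)$. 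Taking $\varepsilon=1/8$ gives a contradiction. Hence $C$ is finite.

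The only delicate point is therefore the joint-continuity step. Everything else is a routine extension of the argument in Lemma~\ref{B:lem:countable}, with the compact group $L$ replacing $\Hom(D,\T)$ and a positive-measure set replacing the whole group.
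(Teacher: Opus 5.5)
\textbf{The gap is in your first step, the extraction of distinct points $d_n\to d$.} Everything afterwards depends on having a nontrivial null sequence in $A$, and nothing in the hypotheses supplies one.

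In Lemma~\ref{B:lem:countable} this extraction works only because $\gen{S}$ is closed there, which is a consequence of $h$-embeddedness. Hence $C\cap\gen{S}$ is a countable compact space and therefore metrizable. Theorem~\ref{B:thm:FHST} has no such hypothesis. The set $C\cap\gen{S}$ is countable but need not be compact, and $C\cap\cl\gen{S}$ is compact but need not be countable or metrizable.

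Nor can a convergent sequence be found by other means. An infinite compact subset of a precompact abelian group may contain no nontrivial convergent sequence at all. For example, $\beta\N$ is zero-dimensional of weight $\cc$, so it embeds in the compact group $(\Z/2\Z)^{\cc}$, and $\beta\N$ has no nontrivial convergent sequences. (The paper gives no proof of this theorem; it is quoted from~\cite{FHST}.)

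What your argument does prove, correctly, is a weaker statement: if $\widehat A_p$ is Baire, then $A$ has no nontrivial convergent sequence. For that you do not need Namioka's theorem. The closed sets $\{\chi:\|\chi(e_n)\|\le\varepsilon\text{ for all }n\ge m\}$ already cover $\widehat A_p$, so one of them has nonempty interior. Your Bessel argument in $\Hom(A,\T)$ then finishes.

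This weaker statement is not enough for Corollary~\ref{B:cor:reflexive}. There, finiteness of \emph{every} compact subset is needed to identify the compact-open and pointwise topologies.

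To close the gap you need an ingredient that handles compact sets which are not sequentially compact. Your Baire step relies on a countable closed cover indexed by the tails of a sequence. Joint-continuity results of Namioka type with a non-metrizable compact factor are not available for arbitrary Baire spaces.

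The obvious reductions also pull in opposite directions:
\begin{itemize}
\item Passing to the closed subgroup $\cl\gen{S}$ keeps the dual Baire, since $\widehat A_p$ maps openly onto it. But it does not make $C$ metrizable.
\item Passing to the quotient of $A$ by $\bigcap_{\lambda\in\Lambda}\ker\lambda$, for a countable set $\Lambda$ of characters, makes the image of $C$ metrizable. But the dual then becomes the closure of $\gen{\Lambda}$ in $\widehat A_p$, a closed subgroup with no reason to be Baire.
\end{itemize}
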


The following consequence of Theorem~\ref{B:thm:FHST} gives the
reflexivity criterion needed below; see also~\cite{FHST}. We include
the argument to specify both topologies involved.

\begin{corollary}\label{B:cor:reflexive}
A precompact Baire abelian group without infinite compact subsets is
Pontryagin reflexive.
\end{corollary}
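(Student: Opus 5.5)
The plan is to compare the two topologies on the dual and on the bidual directly. Let $A$ be precompact, Baire, and without infinite compact subsets. Since every compact subset of $A$ is finite, uniform convergence on compact subsets of $A$ is the same as convergence on finite subsets of $A$. Hence the compact-open and pointwise topologies on $\widehat A$ coincide:
\[
 \widehat A_c=\widehat A_p .
\]
This is the first step, and it is immediate from the definitions.

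Next I would apply Theorem~\ref{B:thm:FHST} with the roles of $A$ and its dual interchanged. The group $\widehat A_p$ is precompact, since it is a subgroup of $\T^A$. By the pointwise duality \eqref{B:eq:pointwise-duality}, its pointwise dual $\widehat{(\widehat A_p)}_p$ is canonically topologically isomorphic to $A$, via $\alpha_A$. Because $A$ is Baire by hypothesis, Theorem~\ref{B:thm:FHST} applied to the precompact group $B=\widehat A_p$ yields that every compact subset of $\widehat A_p$ is finite. This is the key step, and the main point to check is that the identification \eqref{B:eq:pointwise-duality} really carries the Baire property of $A$ to $\widehat B_p$. This holds because \eqref{B:eq:pointwise-duality} is a topological isomorphism, and the Baire property is a topological invariant.

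The same reasoning as in the first step, now applied to $B=\widehat A_c=\widehat A_p$, shows that the compact-open and pointwise topologies on $\widehat B$ agree. Therefore
\[
 \widehat{\widehat A_c}_c=\widehat{(\widehat A_p)}_p
\]
as topological groups, including equality of the underlying groups of continuous characters, because the two topologies on $\widehat A$ are identical. Under this identification the evaluation map $\alpha_A$ is exactly the canonical map of \eqref{B:eq:pointwise-duality}, which is a topological isomorphism. Hence $A$ is Pontryagin reflexive.

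The only real obstacle is invoking Theorem~\ref{B:thm:FHST} for the dual group rather than for $A$ itself. This requires knowing that $\widehat A_p$ is precompact and that its pointwise dual is $A$. Both facts are supplied by the standard pointwise duality cited from Comfort and Ross and from~\cite{FHST}, so I expect no further difficulty.
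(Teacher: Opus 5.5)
Your proposal is correct and follows the paper's proof step for step. Finiteness of compact subsets of $A$ gives $\widehat A_c=\widehat A_p$. Theorem~\ref{B:thm:FHST} applied to the precompact group $B=\widehat A_p$, whose pointwise dual is the Baire group $A$ by~\eqref{B:eq:pointwise-duality}, makes all compact subsets of $B$ finite. Hence $\widehat B_c=\widehat B_p$, which identifies $\alpha_A$ with the pointwise duality isomorphism.
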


\begin{proof}
Let $A$ be such a group and put $B=\widehat A_p$. Since all compact
subsets of $A$ are finite, $\widehat A_c=\widehat A_p=B$.
The group $B$ is precompact, and
$\widehat B_p\cong A$ by~\eqref{B:eq:pointwise-duality}. In particular,
$\widehat B_p$ is Baire. Theorem~\ref{B:thm:FHST}, applied to $B$,
shows that every compact subset of $B$ is finite. Thus
$\widehat B_c=\widehat B_p$, and~\eqref{B:eq:pointwise-duality}
identifies $\alpha_A$ with a topological isomorphism
$A\cong\widehat{\widehat A_c}_c$.
\end{proof}

\begin{lemma}\label{C:lem:cantor-selection}
Let $X$ be a nonempty compact metrizable space, and let
$\varphi_n:X\to Y_n$ $(n\geq1)$ be continuous maps into metrizable
spaces.  If $X$ cannot be covered by countably many fibers of these
maps, then it contains a Cantor set $P$ on which every $\varphi_n$
is one-to-one.
\end{lemma}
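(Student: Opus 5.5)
We build the Cantor set by a standard fusion (Cantor scheme) argument. Fix a compatible metric on $X$ and on each $Y_n$. Call a nonempty closed set $F\subseteq X$ \emph{big} if it cannot be covered by countably many fibers of the maps $\varphi_n$, that is, by countably many sets of the form $\varphi_n^{-1}(y)$. By hypothesis $X$ is big.

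The key splitting step is the following claim. If $F$ is big and $n\geq1$, then there are points $x\neq x'$ in $F$ with $\varphi_n(x)\neq\varphi_n(x')$ such that every neighbourhood of each of them meets $F$ in a big set. To prove it, let $F^*$ be the set of points $x\in F$ all of whose neighbourhoods meet $F$ in a big set. Since $X$ is second countable, $F\setminus F^*$ is covered by countably many basic open sets $B$ with $B\cap F$ not big. Hence $F\setminus F^*$ is covered by countably many fibers, so $F^*$ is nonempty. Moreover, $F^*$ is not contained in a single fiber of $\varphi_n$, for otherwise $F$ would again be covered by countably many fibers. This yields the required $x,x'$.

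Given $x,x'$, continuity of $\varphi_n$ lets us choose disjoint closed neighbourhoods $N,N'$ of diameter $<2^{-k}$ whose $\varphi_n$-images have disjoint closures. The sets $F\cap N$ and $F\cap N'$ are big closed sets. Iterating the splitting for all $n\leq k$ at stage $k$ (applying the claim successively to each piece, shrinking pieces as needed), we obtain a system $F_s$, $s\in\{0,1\}^{<\omega}$, of big closed sets with the following properties:
\begin{itemize}
\item $F_{s0},F_{s1}\subseteq F_s$ are disjoint, and their diameters tend to $0$;
\item for every $n\leq|s|+1$, the closures of $\varphi_n(F_{s0})$ and $\varphi_n(F_{s1})$ are disjoint.
\end{itemize}
One convenient bookkeeping is to achieve the separation for the map $\varphi_n$ with $n\leq |s|+1$ by successively refining both children; refinement preserves separation already achieved. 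Only finitely many $n$ are handled at each level, which is the point to check.

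Put $P=\bigcap_k\bigcup_{|s|=k}F_s$. By compactness and the diameter condition, $P$ is homeomorphic to $\{0,1\}^{\N}$. Injectivity of $\varphi_n$ on $P$ is verified as follows. Two distinct points of $P$ branch at some node $s$. Going down further levels, we may take $|s|\geq n$, since distinct branches separated at $s$ lie in the children of $s$ at every deeper level. The separation condition at the branching node then forces different $\varphi_n$-values only once $n\leq|s|+1$. To handle the case $n>|s|+1$, at stage $k$ we impose separation of the $\varphi_n$-images, for all $n\leq k$, between \emph{every} pair of distinct nodes of length $k$, not only between siblings. This is still finitely many conditions, achieved by refining one pair at a time with the splitting argument applied inside each piece. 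The main obstacle is exactly this uniform bookkeeping: each refinement must keep the pieces big, which holds because by the claim the refined neighbourhoods are taken around points of $F^*$.
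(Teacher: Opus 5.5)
Your proposal is correct and follows essentially the paper's route. Both arguments use the $\sigma$-ideal of sets covered by countably many fibers, a kernel obtained from second countability, and a Cantor scheme in which, at level $k$, all pairs of distinct nodes (not just siblings) have separated $\varphi_n$-images for every $n\le k$. The only difference is cosmetic: the paper passes once to the kernel $F=X\setminus O$, where every fiber is relatively nowhere dense, whereas you recompute the kernel $F^*$ inside each piece. For two distinct pieces $A,B$, you pick $b\in B^*$ and then $a\in A^*$ off the fiber $\varphi_n^{-1}(\varphi_n(b))$, which your claim's proof permits.
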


\begin{proof}
Let $\mathcal I$ consist of all subsets of $X$ which are contained
in a countable union of fibers of the maps $\varphi_n$.  This is
a $\sigma$-ideal.  Let $O$ be the union of all open members of
$\mathcal I$.  Since $X$ has a countable base, $O$ is itself in
$\mathcal I$.  Thus $F=X\setminus O$ is a nonempty compact set.

Every set $F\cap\varphi_n^{-1}(y)$ is closed and nowhere dense
in $F$.  Otherwise there is an open $V\subseteq X$ such that
\[
 \varnothing\neq F\cap V\subseteq\varphi_n^{-1}(y).
\]
Then $V\subseteq O\cup\varphi_n^{-1}(y)$, so $V\in\mathcal I$,
contrary to $F\cap V\neq\varnothing$.  In particular, $F$ has no
isolated points.

We construct a binary family of nonempty relatively open subsets of $F$ by induction.
For each $k \ge 1$, suppose for all $s \in \{0,1\}^{k-1}$ we have constructed $F_s$ such that:
\begin{enumerate}
    \item[(i)] if $s \neq t$, then $\overline{F_s} \cap \overline{F_t} = \emptyset$;
    \item[(ii)] $\text{diam}(F_s) \le 1/(k-1)$ for $k > 1$;
    \item[(iii)] $\varphi_i(\overline{F_s})$ are pairwise disjoint for all $i \le k-1$.
\end{enumerate}
For each $s \in \{0,1\}^{k-1}$, we choose two distinct points $x_{s0}, x_{s1} \in F_s$. Since $\varphi_i^{-1}(y) \cap F$ is closed and nowhere dense in $F$, and $F$ has no isolated points, $F_s$ cannot be covered by finitely many such fibers. Thus, at each choice only finitely many closed nowhere dense fibers must be avoided, allowing us to select all new points so that their images under \(\varphi_1, \dots, \varphi_k\) are pairwise distinct.

Since $Y_i$ are Hausdorff spaces and $\varphi_i(x_{s0}) \neq \varphi_i(x_{s1})$, there exist disjoint open neighborhoods $U_{i,0}, U_{i,1} \subset Y_i$ containing $\varphi_i(x_{s0})$ and $\varphi_i(x_{s1})$, respectively. By continuity of $\varphi_i$, the preimages $\varphi_i^{-1}(U_{i,0})$ and $\varphi_i^{-1}(U_{i,1})$ are disjoint open neighborhoods of $x_{s0}$ and $x_{s1}$. We choose $F_{s0} \subset F_s \cap \varphi_i^{-1}(U_{i,0})$ and $F_{s1} \subset F_s \cap \varphi_i^{-1}(U_{i,1})$ to be nonempty relatively open subsets of $F$ such that:
\begin{itemize}
    \item [(1)] $\overline{F_{s0}} \cap \overline{F_{s1}} = \emptyset$;
    \item [(2)] $\text{diam}(F_{s0}), \text{diam}(F_{s1}) \le 1/k$;
    \item [(3)] $\varphi_i(\overline{F_{s0}}) \cap \varphi_i(\overline{F_{s1}}) = \emptyset$ for all $i \le k$.
\end{itemize}

For any infinite binary sequence $\sigma = (\sigma_1, \sigma_2, \dots) \in \{0,1\}^{\mathbb{N}}$, consider the nested sequence of closed sets $\{\overline{F_{\sigma|_k}}\}_{k=1}^\infty$. Since $F$ is compact and the diameters tend to zero, Cantor's Intersection Theorem implies that $\bigcap_{k=1}^\infty \overline{F_{\sigma|_k}}$ is a singleton. Let $x_\sigma$ denote this unique point, and define
$$
P = \{x_\sigma : \sigma \in \{0,1\}^{\mathbb{N}}\} \subset F.
$$
Since $P$ is compact, has no isolated points, and totally disconnected, it is a Cantor set.

Finally, we verify that each $\varphi_n$ is one-to-one on $P$. Let $x_\sigma, x_\tau \in P$ with $x_\sigma \neq x_\tau$. For any fixed $n \ge 1$, since $\sigma \neq \tau$, there exists some level $k \ge n$ such that $\sigma|_k \neq \tau|_k$. By construction, $\overline{F_{\sigma|_k}}$ and $\overline{F_{\tau|_k}}$ are disjoint, and $\varphi_n(\overline{F_{\sigma|_k}}) \cap \varphi_n(\overline{F_{\tau|_k}}) = \emptyset$. Therefore, $\varphi_n(x_\sigma) \neq \varphi_n(x_\tau)$. Since $n$ was arbitrary, every $\varphi_n$ is one-to-one on $P$.
\end{proof}

For a nonempty countable set $J$, call a Cantor set
$P\subseteq\T^J$ \emph{character-injective} if every nonzero
integer character of $\T^J$ is one-to-one on $P$.
This expression will only be used with this explicit meaning.

\begin{corollary}\label{C:cor:fiber-cover}
Suppose $H\subseteq\T^J$ meets every character-injective Cantor
set, where $J$ is nonempty and countable.  Every compact set
disjoint from $H$ is covered by countably many fibers of nonzero
primitive integer characters.
\end{corollary}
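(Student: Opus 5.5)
We argue by contraposition, using Lemma~\ref{C:lem:cantor-selection}. Let $C\subseteq\T^J$ be compact and disjoint from $H$. If $C=\varnothing$ there is nothing to prove, so assume $C$ is nonempty. Since $J$ is countable, $\T^J$ is compact metrizable, and so is $C$.

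The nonzero integer characters of $\T^J$ are indexed by the nonzero elements of $\Z^{(J)}$. This index set is countable, so we may enumerate them as $\varphi_1,\varphi_2,\ldots$ and regard each as a continuous map $C\to\T$ into the metrizable space $\T$. If the list happened to be finite, we would repeat a character; in fact it is infinite.

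Suppose, toward a contradiction, that $C$ cannot be covered by countably many fibers of the maps $\varphi_n\res C$. Lemma~\ref{C:lem:cantor-selection} then gives a Cantor set $P\subseteq C$ on which every $\varphi_n$ is one-to-one. Thus every nonzero integer character of $\T^J$ is injective on $P$, so $P$ is character-injective. By hypothesis $H$ meets $P$. This is impossible, since $P\subseteq C$ and $C\cap H=\varnothing$.

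Hence $C$ is covered by countably many fibers $\chi_s^{-1}(t)\cap C$ with $s\neq0$. It remains to replace these by fibers of primitive characters. Write $s=d\,s'$, where $d\geq1$ is the greatest common divisor of the coefficients of $s$ and $s'$ is primitive. Then
\[
 \chi_s^{-1}(t)=\chi_{s'}^{-1}\bigl(\{u\in\T:du=t\}\bigr).
\]
The set $\{u\in\T:du=t\}$ has exactly $d$ elements. So each fiber of $\chi_s$ is a union of $d$ fibers of the primitive character $\chi_{s'}$, as already noted in the preliminaries. Replacing each fiber in the countable cover by these finitely many primitive fibers still gives a countable cover of $C$.

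The only point needing care is the application of Lemma~\ref{C:lem:cantor-selection}. Its hypothesis concerns fibers of the restricted maps on $X=C$, and these fibers are exactly the traces on $C$ of the fibers of the characters on $\T^J$. Once this is observed, the remaining steps are bookkeeping.
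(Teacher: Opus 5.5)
Your proof is correct and is essentially the paper's argument: apply Lemma~\ref{C:lem:cantor-selection} to the restrictions of the countably many nonzero integer characters to the compact set, note that the resulting Cantor set would be character-injective and disjoint from $H$, and then split each nonprimitive fiber into finitely many primitive fibers. Your explicit restriction to nonzero characters is a small point the paper leaves implicit. It matters because the zero character's fiber over $0$ is all of $\T^J$.
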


\begin{proof}
There are only countably many integer characters on $\T^J$.
Apply Lemma~\ref{C:lem:cantor-selection} to their restrictions to the
compact set.  If it were not covered by countably many fibers,
it would contain a character-injective Cantor set disjoint from
$H$.  Finally split each nonprimitive character fiber into finitely
many primitive character fibers.
\end{proof}

\begin{lemma}\label{C:lem:cantor-gdelta}
Let $J$ be nonempty and countable.  If $O\subseteq\T^J$ is nonempty
and open and $V_n$ $(n\geq1)$ are dense open subsets of $\T^J$,
then $O\cap\bigcap_n V_n$ contains a character-injective Cantor set.
\end{lemma}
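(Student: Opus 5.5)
The plan is to apply Lemma~\ref{C:lem:cantor-selection} to the compact metrizable space $X=\cl_{\T^J}O'\cap\bigcap_n\cl V_n'$, with suitably chosen sets $O'$ and $V_n'$. It is easier to work inside a closed set $X$ that is itself a dense $G_\delta$ in a closed ball than to build the Cantor set directly, so I would proceed as follows.

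\emph{Setup.} Fix a compatible metric on $\T^J$, which is metrizable because $J$ is countable. The integer characters $\chi_s$ with $s\in\Z^{(J)}\setminus\{0\}$ form a countable family; enumerate them as $\varphi_1,\varphi_2,\dots$. Set $G=O\cap\bigcap_n V_n$. This set is a dense $G_\delta$ in the open set $O$, so with its relative topology it is a Polish space; in particular it is Baire.

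\emph{Key step.} I would show that $G$ is not covered by countably many fibers of the $\varphi_n$. Each fiber $\chi_s^{-1}(t)$ with $s\neq0$ is closed and nowhere dense in $\T^J$, as noted after~\eqref{C:eq:annihilator}. Its trace on $G$ is therefore closed and nowhere dense in $G$, because $G$ is dense in the open set $O$. Since $G$ is nonempty and Baire, countably many such traces cannot cover it.

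\emph{Getting a compact space.} Lemma~\ref{C:lem:cantor-selection} requires a compact domain, whereas $G$ is only Polish. I see two routes.
\begin{enumerate}
\item Rerun the fusion argument of Lemma~\ref{C:lem:cantor-selection} inside $G$ directly. Take $F$ to be the perfect kernel obtained there, but insist at stage $k$ that the closures of the sets $F_s$ lie in $V_k\cap O$. This is possible because the points $x_{s0},x_{s1}$ can be chosen in $G$ and then shrunk to small closed balls. Each point $x_\sigma$ then lies in every $V_k$ and in $O$.
\item Use the standard fact that a nonempty Polish space in which every point is a condensation point outside any given countable union of fibers contains such a Cantor set.
\end{enumerate}
I would take route (1) and run the construction with $X$ replaced by $G$. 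The $\sigma$-ideal argument still goes through, since $G$ is second countable: removing the open members of the ideal leaves a nonempty set $F\subseteq G$ that is closed in $G$, and in $F$ every fiber is nowhere dense.

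\emph{Fusion construction.} At each stage I would choose relatively open sets $F_s$ of $F$ whose closures in $\T^J$ satisfy three conditions: they are contained in $O\cap V_1\cap\cdots\cap V_k$, they have diameter at most $1/k$, and their images under $\varphi_1,\dots,\varphi_k$ are pairwise disjoint. The point $x_\sigma$ is the unique point of $\bigcap_k\cl F_{\sigma|k}$ (closures in the compact space $\T^J$), and it lies in $O\cap\bigcap V_k$. Injectivity of every $\varphi_n$ on the resulting set $P$ follows exactly as in Lemma~\ref{C:lem:cantor-selection}.

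\emph{Main obstacle.} The delicate point is that $P$ must sit inside $G$ while the nested closures are taken in the compact ambient space $\T^J$. I would handle this by requiring $\cl F_{s}\subseteq V_{k}\cap O$ at stage $k$, which is possible since each $V_k$ is open. With that requirement in place, $P$ is a character-injective Cantor set contained in $O\cap\bigcap_n V_n$, as the lemma asserts.
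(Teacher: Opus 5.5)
Your route (1) is correct and is essentially the paper's proof: a binary fusion in $\T^J$ in which the new points at each level avoid finitely many closed nowhere dense character fibers, and the closures at level $k$ lie in $O\cap V_1\cap\cdots\cap V_k$. The $\sigma$-ideal/perfect-kernel step and the Baire argument in the Polish space $G$ are superfluous, since the kernel is all of $G$ anyway and the paper simply works in the nonempty open sets $O\cap V_1\cap\cdots\cap V_k$; you should also drop your opening plan with closures $\cl V_n'$, which would not place the Cantor set inside the $V_n$.
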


\begin{proof}
Enumerate the nonzero integer characters.  Carry out a binary
construction inside $O$, requiring the closures chosen at level
$k$ to lie in $V_1\cap\cdots\cap V_k$ and to have diameter at most
$2^{-k}$.  Choose the finitely many new points at that level so
that their values under each of the first $k$ characters are
pairwise distinct.  This is possible because character fibers
are closed and nowhere dense.  Shrink the neighborhoods to preserve
these distinctions on their closures.  The resulting Cantor set
has all the required properties.
\end{proof}

\section{A connected Baire reflexive example}\label{sec:connected}

We use the notation and common results of Section~\ref{sec:prelim}.

Let $\theta$ be a nonzero countable ordinal.  For
$1\leq\alpha\leq\theta$ write
\[
 K_\alpha=\T^{[0,\alpha)},
 \qquad
 \pi_\alpha^\beta:K_\beta\longrightarrow K_\alpha
 \quad(\alpha\leq\beta\leq\theta)
\]
for the coordinate projections.  For $H\subseteq K_\theta$, put
$H_\alpha=\pi_\alpha^\theta(H)$.

We first control the projections of compact sets missed by $H$;
then a fusion lemma handles countable limit stages.

\begin{lemma}\label{C:lem:obstacle}
Let $0<m<\theta$. Suppose that $H_{m+1}$ meets every
character-injective Cantor subset of $K_{m+1}$ and that, for every
primitive $s\in\Z^{([0,m+1))}$ with $s_m\neq0$ and every $t\in\T$,
the set $H_{m+1}\cap\chi_s^{-1}(t)$ is dense in $\chi_s^{-1}(t)$.
If $F\subseteq K_{m+1}$ is compact and
$F\cap H_{m+1}=\varnothing$, then $\pi_m^{m+1}(F)$ is closed and
nowhere dense in $K_m$.
\end{lemma}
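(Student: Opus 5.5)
Closedness of $\pi_m^{m+1}(F)$ is immediate, since $F$ is compact and the projection is continuous. The work is in showing that the image is nowhere dense. For this I will first cover $F$ by countably many character fibers, using that $F$ is a compact set disjoint from $H_{m+1}$ and that $H_{m+1}$ meets every character-injective Cantor subset of $K_{m+1}$. Corollary~\ref{C:cor:fiber-cover}, applied with $J=[0,m+1)$, gives nonzero primitive $s^{(n)}\in\Z^{([0,m+1))}$ and $t_n\in\T$ with
\[
 F\subseteq\bigcup_{n}\chi_{s^{(n)}}^{-1}(t_n).
\]
Put $F_n=F\cap\chi_{s^{(n)}}^{-1}(t_n)$. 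Each $F_n$ is compact, and $\pi_m^{m+1}(F)=\bigcup_n\pi_m^{m+1}(F_n)$.

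The fibers split into two kinds according to whether they involve coordinate $m$.

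\emph{Case $s^{(n)}_m\neq0$.} Write $L_n=\chi_{s^{(n)}}^{-1}(t_n)$. By hypothesis $H_{m+1}\cap L_n$ is dense in $L_n$, and $F_n$ is a closed subset of $L_n$ disjoint from $H_{m+1}$. So $F_n$ has empty interior in $L_n$; that is, it is closed and nowhere dense in $L_n$. Lemma~\ref{C:lem:finite-cover}, with $I'=[0,m)$, then shows that $\pi_m^{m+1}(F_n)$ is closed and nowhere dense in $K_m$.

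\emph{Case $s^{(n)}_m=0$.} Here $s^{(n)}$ is a nonzero character of $K_m$ composed with the projection. Hence $\pi_m^{m+1}(F_n)$ lies in a single fiber of a nonzero integer character of $K_m$, and such a fiber is closed and nowhere dense in $K_m$. Since $m>0$, $K_m$ is a nontrivial torus, so such characters exist and this statement makes sense.

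It follows that $\pi_m^{m+1}(F)$ is a closed set that is a countable union of closed nowhere dense subsets of the compact space $K_m$. By the Baire theorem for compact Hausdorff spaces, a closed meagre set has empty interior. So $\pi_m^{m+1}(F)$ is closed and nowhere dense.

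The main obstacle is the first step: justifying the reduction to countably many fibers and matching the primitive characters produced there with the density hypothesis. Corollary~\ref{C:cor:fiber-cover} already produces primitive characters, and the hypothesis is stated exactly for primitive $s$ with $s_m\neq0$, so the two fit together. One point still needs care: nowhere density of $F_n$ inside $L_n$ must follow from density of $H_{m+1}\cap L_n$ in $L_n$. It does, because $F_n$ is closed in $L_n$ and misses a dense subset of $L_n$.
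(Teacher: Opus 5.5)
Your proof is correct and follows essentially the same route as the paper: you cover $F$ by countably many primitive character fibers via Corollary~\ref{C:cor:fiber-cover}, note that fibers with $s_m=0$ project into a nowhere dense character fiber of $K_m$, handle fibers with $s_m\neq0$ through the density hypothesis and Lemma~\ref{C:lem:finite-cover}, and finish with the Baire theorem applied to the closed meagre image. Your remark that $F_n$ is nowhere dense in $L_n$ because it is closed in $L_n$ and misses a dense subset spells out the step the paper leaves implicit.
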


\begin{proof}
By Corollary~\ref{C:cor:fiber-cover}, there are primitive characters
$\chi_{s_n}$ and points $t_n\in\T$ such that
\[
 F\subseteq\bigcup_{n\geq1}L_n,
 \qquad L_n=\chi_{s_n}^{-1}(t_n).
\]
Put $F_n=F\cap L_n$.  If $(s_n)_m=0$, the projection of $F_n$
is contained in a nonzero character fiber in $K_m$ and is therefore
nowhere dense.  If $(s_n)_m\neq0$, the assumed density on character fibers shows that
$F_n$ is closed and nowhere dense in $L_n$.  By
Lemma~\ref{C:lem:finite-cover}, its projection is again closed and
nowhere dense.  Thus $\pi_m^{m+1}(F)=\bigcup_n\pi_m^{m+1}(F_n)$
is meagre.  It is also compact, hence closed.  Since $K_m$ is
Baire, this closed meagre set has empty interior.
\end{proof}

Let $s = (s_1, s_2, \dots, s_n) \in \{0,1\}^n$  and let $i \in \{0,1\}$. We denote by $s^\frown i$
the sequence of length $n+1$ obtained by appending $i$ to the end of $s$, namely
$$
s^\frown i = (s_1, s_2, \dots, s_n, i) \in \{0,1\}^{n+1}.
$$
Here, the symbol $\frown$ denotes the \emph{concatenation} operation on finite sequences.
The next lemma gives the limit-stage argument.  Its statement
involves only open subsets of products of circles.

\begin{lemma}\label{C:lem:limit-fusion}
Let $U,V\subseteq K_\theta$ be disjoint nonempty open sets.  Set
\begin{equation}\label{C:eq:overlaps}
 U_\alpha=\pi_\alpha^\theta(U),\qquad
 V_\alpha=\pi_\alpha^\theta(V),\qquad
 W_\alpha=U_\alpha\cap V_\alpha
 \quad(1\leq\alpha\leq\theta).
\end{equation}
Suppose $W_1\neq\varnothing$ and
\begin{equation}\label{C:eq:successor-dense-overlap}
 \pi_\alpha^{\alpha+1}(W_{\alpha+1})
 \text{ is dense in }W_\alpha
 \quad(1\leq\alpha<\theta).
\end{equation}
Then some countable limit ordinal $\beta\leq\theta$ admits a
character-injective Cantor set
\begin{equation}\label{C:eq:cantor-obstacle}
 P\subseteq K_\beta\setminus(U_\beta\cup V_\beta).
\end{equation}
\end{lemma}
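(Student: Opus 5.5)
The plan is to pick $\beta$ as the first place where the density hypothesis fails ``globally'', and then run a binary fusion along a sequence cofinal in $\beta$. The fusion keeps every restriction inside the overlaps $W_{\alpha_n}$, which forces the limit points out of $U_\beta\cup V_\beta$.

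\emph{Choice of $\beta$.} Call $\gamma\le\theta$ \emph{bad} if there are $\alpha<\gamma$ and a nonempty open $O\subseteq W_\alpha$ with $\pi_\alpha^\gamma(W_\gamma)\cap O=\varnothing$. Since $U\cap V=\varnothing$ we have $W_\theta=\varnothing$, and $W_1\neq\varnothing$. Hence $\theta$ is bad (take $\alpha=1$, $O=W_1$; if $\theta=1$, the hypothesis $W_1\neq\varnothing$ is already contradicted). Let $\beta$ be the least bad ordinal, with witnesses $\alpha$ and $O$.

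I claim $\beta$ is a limit. Suppose instead $\beta=\gamma+1$ with $\gamma\ge\alpha$.
\begin{itemize}
\item If $\gamma=\alpha$, then \eqref{C:eq:successor-dense-overlap} gives $\pi_\alpha^{\alpha+1}(W_{\alpha+1})\cap O\neq\varnothing$, contradicting badness.
\item If $\gamma>\alpha$, minimality gives that $\pi_\alpha^\gamma(W_\gamma)$ is dense in $W_\alpha$. So $O'=W_\gamma\cap(\pi_\alpha^\gamma)^{-1}(O)$ is a nonempty open subset of $W_\gamma$ (each $W_\delta$ is open, since coordinate projections are open). Applying \eqref{C:eq:successor-dense-overlap} at $\gamma$, $W_{\gamma+1}$ meets $(\pi_\gamma^{\gamma+1})^{-1}(O')$, again a contradiction.
\end{itemize}
Thus $\beta$ is a countable limit ordinal. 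Moreover, by minimality, for all $\alpha'\le\gamma<\beta$ the set $\pi_{\alpha'}^{\gamma}(W_\gamma)$ is dense in $W_{\alpha'}$. This density between any two levels below $\beta$ is exactly what the fusion needs.

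\emph{Fusion.} Fix $\alpha=\alpha_0<\alpha_1<\cdots$ cofinal in $\beta$, a compatible metric on $K_\beta$, and an enumeration $(\chi_k)$ of the nonzero integer characters of $K_\beta$. Each $\chi_k$ has finite support, so it factors through some $K_{\alpha_n}$. By recursion on $k$, choose for each $s\in\{0,1\}^k$ a nonempty open set $O_s\subseteq K_{\alpha_{n_k}}$, with $n_k$ increasing so that $\chi_1,\dots,\chi_k$ factor through $K_{\alpha_{n_k}}$, satisfying:
\begin{enumerate}
\item $\cl O_s\subseteq W_{\alpha_{n_k}}$ and $\pi_\alpha(\cl O_s)\subseteq O$;
\item $\pi(\cl O_{s^\frown i})\subseteq O_s$;
\item the cylinders over the $\cl O_s$ have diameter at most $2^{-k}$;
\item for $s\neq t$ of length $k$, the cylinders over $\cl O_s$ and $\cl O_t$ are disjoint, and so are the images of $\cl O_s$ and $\cl O_t$ under each $\chi_j$, $j\le k$.
\end{enumerate}
The extension step works as follows. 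By the density of $\pi(W_{\alpha_{n_{k+1}}})$ in $W_{\alpha_{n_k}}$, the set $W_{\alpha_{n_{k+1}}}\cap\pi^{-1}(O_s)$ is nonempty open. There we pick two points for each $s$, with all chosen points having pairwise distinct values under $\chi_1,\dots,\chi_{k+1}$; this is possible because character fibers are closed and nowhere dense, as in Lemma~\ref{C:lem:cantor-gdelta}. Then we shrink to small neighborhoods. The resulting intersection points form a Cantor set $P\subseteq K_\beta$, and $P$ is character-injective by (4).

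\emph{Avoidance.} Let $x\in P$. Then $x|_{\alpha_n}\in W_{\alpha_n}$ for every $n$, and $x|_\alpha\in O$. Suppose $x\in U_\beta$. Since $U_\beta$ is open, some basic cylinder $B\ni x$ with $B\subseteq U_\beta$ depends only on coordinates below some $\alpha_n$. Because $x|_{\alpha_n}\in V_{\alpha_n}=\pi_{\alpha_n}^\beta(V_\beta)$, there is $v\in V_\beta$ with $v|_{\alpha_n}=x|_{\alpha_n}$. Then $v\in B\subseteq U_\beta$, so $v\in W_\beta$. But also $v|_\alpha=x|_\alpha\in O$, contradicting $\pi_\alpha^\beta(W_\beta)\cap O=\varnothing$. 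The case $x\in V_\beta$ is symmetric, which proves \eqref{C:eq:cantor-obstacle}.

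The main obstacle is the choice of $\beta$: naively one would take the least $\beta$ with $W_\beta=\varnothing$, but the density needed in the fusion can fail through limit stages. Defining $\beta$ as the least bad ordinal supplies that density between any two levels below $\beta$, and the witness $O$ replaces the emptiness of $W_\beta$ in the avoidance step.
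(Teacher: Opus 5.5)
Your proposal is correct and follows essentially the paper's own argument. You take the least stage at which density of the projected overlaps fails, rule out successor stages using the successor density hypothesis, run a character-injective binary fusion along a sequence cofinal in $\beta$, and use the witness $O$ to keep $P$ out of $U_\beta\cup V_\beta$; your avoidance step is a slightly more direct form of the paper's closure argument.

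One detail is missing. For condition (3), $n_k$ must also be chosen large enough that the coordinates in $[\alpha_{n_k},\beta)$ contribute less than $2^{-k}$ to the diameter, since shrinking inside $K_{\alpha_{n_k}}$ cannot achieve this by itself. The paper arranges it by requiring the first $n$ enumerated coordinates to lie below $\alpha_n$ and using a weighted metric.
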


\begin{proof}
All the sets in~\eqref{C:eq:overlaps} are open, and
\begin{equation}\label{C:eq:overlap-monotone}
 \pi_\alpha^\beta(W_\beta)\subseteq W_\alpha
 \quad(\alpha\leq\beta).
\end{equation}
The final overlap $W_\theta$ is empty.  Since $W_1$ is nonempty,
there is a least ordinal $\beta\leq\theta$ such that, for some
$1\leq\gamma<\beta$,
\begin{equation}\label{C:eq:first-failure}
 \pi_\gamma^\beta(W_\beta)
 \text{ is not dense in }W_\gamma.
\end{equation}
Choose a nonempty open set $O\subseteq W_\gamma$ with
\begin{equation}\label{C:eq:forbidden-overlap}
 O\cap\cl_{K_\gamma}\bigl(\pi_\gamma^\beta(W_\beta)\bigr)
 =\varnothing.
\end{equation}

The ordinal $\beta$ cannot be a successor.  Indeed, if
$\beta=\delta+1$, then~\eqref{C:eq:successor-dense-overlap} gives
density of $\pi_\delta^\beta(W_\beta)$ in $W_\delta$.
When $\gamma<\delta$, minimality of $\beta$ also gives density of
$\pi_\gamma^\delta(W_\delta)$ in $W_\gamma$.  A continuous image
of a dense subset is dense in the image of the whole space, so
composition contradicts~\eqref{C:eq:first-failure}.  The case
$\gamma=\delta$ contradicts~\eqref{C:eq:successor-dense-overlap}
directly.  Thus $\beta$ is a countable limit ordinal.

For $\gamma\leq\alpha<\beta$, put
\[
 W_\alpha^O=W_\alpha\cap(\pi_\gamma^\alpha)^{-1}(O).
\]
These sets are nonempty and open.  Moreover, minimality of $\beta$
implies that
\begin{equation}\label{C:eq:localized-density}
 \pi_\alpha^{\alpha'}(W_{\alpha'}^O)
 \text{ is dense in }W_\alpha^O
 \quad(\gamma\leq\alpha<\alpha'<\beta).
\end{equation}
Indeed, first use the density of
$\pi_\alpha^{\alpha'}(W_{\alpha'})$ in $W_\alpha$, and then
restrict to the open cylinder above $O$.

Enumerate the nonzero integer characters of $K_\beta$ as
$\ell_1,\ell_2,\ldots$, and enumerate its coordinates as
$\xi_1,\xi_2,\ldots$.  Choose a strictly increasing cofinal
sequence
\[
 \gamma<\alpha_1<\alpha_2<\cdots<\beta
\]
such that $[0,\alpha_n)$ contains the supports of
$\ell_1,\ldots,\ell_n$ and the coordinates
$\xi_1,\ldots,\xi_n$.  Equip $K_\beta$ with the metric
\[
 d(z,z')=\sum_{r\geq1}2^{-r}\rho(z_{\xi_r},z'_{\xi_r}),
\]
where $\rho$ is a compatible metric on $\T$ bounded by one.

We construct nonempty open sets
$V_s\subseteq K_{\alpha_n}$ for $s\in\{0,1\}^n$ so that:
\begin{enumerate}
\item $\cl V_s\subseteq W_{\alpha_n}^O$, and, for $i\in\{0,1\}$,
\begin{equation}\label{C:eq:fusion-nesting}
 \cl V_{s^\frown i}
 \subseteq W_{\alpha_{n+1}}^O
 \cap(\pi_{\alpha_n}^{\alpha_{n+1}})^{-1}(V_s);
\end{equation}
\item for each $k\leq n$, the sets $\ell_k(\cl V_s)$,
$s\in\{0,1\}^n$, are pairwise disjoint;
\item the sets
$(\pi_{\alpha_n}^{\beta})^{-1}(\cl V_s)$ have $d$-diameter
at most $2^{1-n}$.
\end{enumerate}
Here a character whose support lies below $\alpha_n$ is also
regarded as a character on $K_{\alpha_n}$.

To carry out a step,~\eqref{C:eq:localized-density} ensures that each
parent set has a nonempty open inverse image in
$W_{\alpha_{n+1}}^O$.  Choose two points above each parent.  Choose
all the points at this level so that their values under each of
the first $n+1$ characters are pairwise distinct.  This requires
avoiding only finitely many closed nowhere dense character fibers
at each choice.  Next choose sufficiently small neighborhoods of
the selected points.  Regularity and continuity give (i) and (ii).
For (iii), make the coordinate diameters on
$\xi_1,\ldots,\xi_{n+1}$ at most $2^{-(n+1)}$; the total weight
of the remaining coordinates is at most $2^{-(n+1)}$.
The initial level is obtained by the same argument inside
$W_{\alpha_1}^O$.

Define
\begin{equation}\label{C:eq:fusion-cantor}
 P=\bigcap_{n\geq1}\
   \bigcup_{s\in\{0,1\}^n}
    (\pi_{\alpha_n}^{\beta})^{-1}(\cl V_s).
\end{equation}
The sets along any branch form a decreasing sequence of nonempty
compact sets with diameters tending to zero.  Their intersection
is one point.  Distinct branches give distinct points by (ii),
and the resulting map from $\{0,1\}^{\N}$ to $P$ is continuous.
Consequently $P$ is a Cantor set.  For a fixed $k$, two distinct
branches are separated at some level $n\geq k$, where (ii) shows
that their $\ell_k$-values are distinct.  Thus $P$ is
character-injective.

It remains to locate $P$.  If $z\in P$, then
$\pi_{\alpha_n}^\beta(z)\in W_{\alpha_n}$ for every $n$.
Every basic neighborhood of $z$ is determined by finitely many
coordinates, all lying below some $\alpha_n$.  Since that prefix
of $z$ belongs to both $U_{\alpha_n}$ and $V_{\alpha_n}$, the
neighborhood meets both $U_\beta$ and $V_\beta$.  Hence
\begin{equation}\label{C:eq:both-closures}
 z\in\cl U_\beta\cap\cl V_\beta.
\end{equation}
Also $\pi_\gamma^\beta(z)\in O$.  By
\eqref{C:eq:forbidden-overlap}, the open cylinder
$C=(\pi_\gamma^\beta)^{-1}(O)$ contains no point of
$U_\beta\cap V_\beta$.  If $z$ belonged to $U_\beta$, then the
open neighborhood $U_\beta\cap C$ of $z$ would meet $V_\beta$ by
\eqref{C:eq:both-closures}, a contradiction.  The same argument
excludes $z\in V_\beta$.  This proves~\eqref{C:eq:cantor-obstacle}.
\end{proof}

\begin{lemma}\label{C:thm:criterion}
A dense subgroup $H\leq K_\theta$ is connected if:
\begin{enumerate}
\item $H_1=\T$;
\item for every $1\leq\alpha\leq\theta$, the set $H_\alpha$ meets
every character-injective Cantor subset of $K_\alpha$;
\item whenever $0<m<\theta$ and $s\in\Z^{([0,m+1))}$ is primitive
with $s_m\neq0$, the set
\[
 H_{m+1}\cap\chi_s^{-1}(t)
\]
is dense in $\chi_s^{-1}(t)$ for every $t\in\T$.
\end{enumerate}
\end{lemma}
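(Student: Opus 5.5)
Suppose, for contradiction, that $H=A\cup B$ with $A,B$ disjoint nonempty clopen subsets of $H$. As in the proof of Lemma~\ref{C:lem:countable-support}(iii), put $U=K_\theta\setminus\cl B$ and $V=K_\theta\setminus\cl A$. Density of $H$ gives $U\cap V=\varnothing$, and we also have $U\cap H=A$, $V\cap H=B$ and $H\subseteq U\cup V$. Define $U_\alpha,V_\alpha,W_\alpha$ by~\eqref{C:eq:overlaps}. We will apply Lemma~\ref{C:lem:limit-fusion}. Its conclusion yields a countable limit ordinal $\beta\le\theta$ and a character-injective Cantor set $P\subseteq K_\beta\setminus(U_\beta\cup V_\beta)$. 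Since $H\subseteq U\cup V$, we have $H_\beta\subseteq U_\beta\cup V_\beta$, so $P$ misses $H_\beta$. This contradicts hypothesis~(ii). It therefore suffices to verify the two hypotheses of Lemma~\ref{C:lem:limit-fusion}.

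For $W_1\neq\varnothing$: the sets $U_1$ and $V_1$ are open in $\T$ and cover $H_1=\T$ by~(i). Both are nonempty, since $A,B\neq\varnothing$. Connectedness of $\T$ then forces $U_1\cap V_1\neq\varnothing$.

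For the successor density~\eqref{C:eq:successor-dense-overlap}, fix $1\le m<\theta$. Suppose $O\subseteq W_m$ is nonempty and open and misses $\pi_m^{m+1}(W_{m+1})$. Put $F=\bigl(\pi_m^{m+1}\bigr)^{-1}(\cl O')\setminus(U_{m+1}\cup V_{m+1})$ for a nonempty open $O'$ with $\cl O'\subseteq O$. Then $F$ is compact. It is disjoint from $H_{m+1}$, because $H_{m+1}\subseteq U_{m+1}\cup V_{m+1}$. By Lemma~\ref{C:lem:obstacle}, whose hypotheses are (ii) at $\alpha=m+1$ and (iii), the projection $\pi_m^{m+1}(F)$ is closed and nowhere dense in $K_m$. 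So we can pick a point $y\in O'$ outside it.

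The fiber $\{y\}\times\T$ is connected and lies above $O'\subseteq U_m\cap V_m$. It avoids $F$, so it is covered by $U_{m+1}\cup V_{m+1}$. Above $O$ these two sets are disjoint, since $O$ misses the projection of $W_{m+1}$. Hence connectedness puts the whole fiber inside $U_{m+1}$, say.

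The remaining task is to show that the fiber also meets $V_{m+1}$, which then gives the contradiction. Here $y\in V_m$, so $V$ contains a point whose first $m$ coordinates equal $y$. Its projection to $K_{m+1}$ lies in $V_{m+1}$ and on the fiber $\{y\}\times\T$. Thus the fiber meets $V_{m+1}$ as well, and the contradiction follows.

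In this formulation the step is clean. I expect the main obstacle to be the bookkeeping in the case $m=0$ against the indexing $0<m$ in hypothesis~(iii), together with checking that ``dense in $W_m$'' is exactly what Lemma~\ref{C:lem:limit-fusion} uses. Since $\alpha\ge1$ throughout, hypothesis~(iii) with $0<m$ covers every needed successor step $m\to m+1$.
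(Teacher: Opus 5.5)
Your proof is correct and follows the paper's argument. It uses the same sets $U,V,W_\alpha$, the same combination of Lemma~\ref{C:lem:obstacle} with connectedness of the circle fibre to obtain~\eqref{C:eq:successor-dense-overlap}, and the same appeal to Lemma~\ref{C:lem:limit-fusion} against hypothesis~(ii). The differences are cosmetic: you prove the density step by contradiction inside $\cl O'$, where the paper shows $W_\alpha\setminus M_\alpha\subseteq\pi_\alpha^{\alpha+1}(W_{\alpha+1})$ directly. Also, for $\theta=1$, which the paper treats separately, your derivation of $W_1\neq\varnothing$ already contradicts $W_1=U\cap V=\varnothing$.
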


\begin{proof}
The case $\theta=1$ is immediate.  Suppose $\theta>1$ and
$H=A\sqcup B$ is a separation into nonempty relatively clopen sets.
In $K_\theta$, let
\[
 U=K_\theta\setminus\cl B,
 \qquad V=K_\theta\setminus\cl A.
\]
These are disjoint nonempty open sets with $H\subseteq U\cup V$.
Define $U_\alpha,V_\alpha,W_\alpha$ by~\eqref{C:eq:overlaps}.
Hypothesis (i) says that the nonempty open sets $U_1,V_1$ cover
$\T$.  Since $\T$ is connected, $W_1\neq\varnothing$.

For $1\leq\alpha<\theta$, set
\[
 F_{\alpha+1}
 =K_{\alpha+1}\setminus(U_{\alpha+1}\cup V_{\alpha+1}).
\]
This is a compact set disjoint from $H_{\alpha+1}$.  By
Lemma~\ref{C:lem:obstacle},
$M_\alpha=\pi_\alpha^{\alpha+1}(F_{\alpha+1})$ is closed and
nowhere dense.  If $y\in W_\alpha\setminus M_\alpha$, the fiber
$(\pi_\alpha^{\alpha+1})^{-1}(y)$ is a circle, is covered by
$U_{\alpha+1},V_{\alpha+1}$, and meets both of them.  Connectedness
of that circle forces the two sets to overlap there.  Thus
\[
 W_\alpha\setminus M_\alpha
 \subseteq\pi_\alpha^{\alpha+1}(W_{\alpha+1}),
\]
which proves~\eqref{C:eq:successor-dense-overlap}.

Lemma~\ref{C:lem:limit-fusion} now gives a character-injective Cantor
set $P\subseteq K_\beta\setminus(U_\beta\cup V_\beta)$ for some
$\beta\leq\theta$.  Since $H_\beta\subseteq U_\beta\cup V_\beta$,
this contradicts hypothesis (ii).
\end{proof}

\begin{remark}\label{C:rem:inverse-limit}
The fusion argument takes place in the compact products
$K_\alpha$, not in an asserted inverse-limit representation of
$H$.  Its limit points are deliberately placed outside $H_\beta$.
What yields the contradiction is that they form an entire
character-injective Cantor set.  The argument therefore does not
assume that compatible points of the finite projections belong
to the actual subgroup.
\end{remark}

\begin{corollary}\label{C:cor:global-criterion}
Let $\kappa$ be a nonzero ordinal, and let $G\leq\T^\kappa$ be dense.
Suppose $p_{\{0\}}(G)=\T$, and suppose:
\begin{enumerate}
\item for every countable $J\subseteq\kappa$ containing $0$,
$p_J(G)$ meets every character-injective Cantor subset of $\T^J$;
\item for every nonzero primitive $s\in\Z^{(\kappa)}$ with
$m=\max\supp s>0$, and every $t\in\T$, the set
\[
 p_{[0,m]}(G)\cap\chi_s^{-1}(t)
\]
is dense in $\chi_s^{-1}(t)\subseteq\T^{[0,m]}$.
\end{enumerate}
Then $G$ is connected.
\end{corollary}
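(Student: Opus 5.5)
We reduce Corollary~\ref{C:cor:global-criterion} to Lemma~\ref{C:thm:criterion} via Lemma~\ref{C:lem:countable-support}(3). Since $G$ is dense in $\T^\kappa$, it suffices to show that $p_J(G)$ is connected for every countable $J\subseteq\kappa$. Enlarging $J$ does not hurt, because $p_J(G)$ is a continuous image of $p_{J'}(G)$ whenever $J\subseteq J'$. So we may assume $0\in J$. We may also assume $J$ is closed under the operation $m\mapsto[0,m]\cap J$, in the sense needed below. Concretely, enumerate $J$ in increasing order (it is a countable set of ordinals, hence well ordered) and let $\theta$ be its order type, a nonzero countable ordinal. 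The order isomorphism $e:[0,\theta)\to J$ identifies $\T^J$ with $K_\theta$, sends coordinate $0$ to coordinate $0$ (since $0=\min J$), and sends the initial segments $[0,\alpha)$ to initial segments of $J$. Put $H=p_J(G)$, viewed in $K_\theta$. It is a subgroup, and it is dense because $G$ is dense.

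We then verify hypotheses (i)--(iii) of Lemma~\ref{C:thm:criterion}. Hypothesis (i) is exactly $p_{\{0\}}(G)=\T$. For (ii), take $1\leq\alpha\leq\theta$. Then $H_\alpha=p_{J_\alpha}(G)$, where $J_\alpha=e([0,\alpha))$ is a countable subset of $\kappa$ containing $0$. So hypothesis (i) of the corollary gives that $H_\alpha$ meets every character-injective Cantor subset of $\T^{J_\alpha}=K_\alpha$.

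Hypothesis (iii) is the step needing care. Take $0<m<\theta$ and a primitive $s\in\Z^{([0,m+1))}$ with $s_m\neq0$, and transport it to a primitive $s'\in\Z^{(\kappa)}$ supported in $J_{m+1}$. Its maximal support point is $\mu=e(m)>0$. Hypothesis (ii) of the corollary gives density of $p_{[0,\mu]}(G)\cap\chi_{s'}^{-1}(t)$ in $\chi_{s'}^{-1}(t)\subseteq\T^{[0,\mu]}$. We must push this down to $\T^{J_{m+1}}$, where $J_{m+1}=J\cap[0,\mu]$. Let $\rho:\T^{[0,\mu]}\to\T^{J_{m+1}}$ be the coordinate projection. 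Since $s'$ is supported in $J_{m+1}$, the fiber $\chi_{s'}^{-1}(t)$ in $\T^{[0,\mu]}$ is the full preimage $\rho^{-1}$ of the corresponding fiber $L=\chi_s^{-1}(t)$ in $\T^{J_{m+1}}$. Hence $\rho$ maps this fiber onto $L$, and $\rho$ restricted to it is an open map, being a product projection $L\times\T^{[0,\mu]\setminus J_{m+1}}\to L$. A continuous surjection carries dense sets to dense sets, so
\[
\rho\bigl(p_{[0,\mu]}(G)\cap\chi_{s'}^{-1}(t)\bigr)\subseteq H_{m+1}\cap L
\]
is dense in $L$. This gives (iii).

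Lemma~\ref{C:thm:criterion} then shows that $H=p_J(G)$ is connected. Since this holds for all countable $J\ni0$, and hence for all countable $J$, Lemma~\ref{C:lem:countable-support}(3) yields that $G$ is connected. The only real obstacle is the bookkeeping in (iii): matching the interval $[0,\max\supp s]$ of $\kappa$ with the initial segment of $J$ after reindexing, and observing that the fiber projects onto the fiber. Everything else is direct translation.
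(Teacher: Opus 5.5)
Your proposal is correct and is essentially the paper's argument: you reindex a countable $J\ni 0$ by its order type $\theta$, check the hypotheses of Lemma~\ref{C:thm:criterion}, and conclude with Lemma~\ref{C:lem:countable-support}(iii); for hypothesis (iii) you project the full fiber $\chi_{s'}^{-1}(t)\subseteq\T^{[0,\mu]}$ onto the prefix fiber, which works because $\operatorname{supp}s'\subseteq J\cap[0,\mu]$. Your preliminary assumption that $J$ is ``closed under $m\mapsto[0,m]\cap J$'' is vacuous, since $[0,m]\cap J\subseteq J$ always holds, and your argument never uses it, so you can drop it.
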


\begin{proof}
Give a countable $J$ containing $0$ its inherited order, whose
order type is a nonzero countable ordinal $\theta$.
Every prefix satisfies (i).  If a character involves the last
coordinate $m$ of a successor prefix, then $m$ is its largest
actual support coordinate.  Projecting (ii) to this prefix gives
hypothesis (iii) of Lemma~\ref{C:thm:criterion}; the entire character
fiber projects onto the fiber on the prefix because the support
is retained.  Thus $p_J(G)$ is connected.  Projections not
containing $0$ are continuous images of ones that do.  Apply
Lemma~\ref{C:lem:countable-support}(iii).
\end{proof}

The underlying group for these lemmas is the abstract circle group,
with the fixed decomposition~\eqref{C:eq:circle-decomposition}.
Fix, for every $0<i<\cc$, a countable divisible subgroup
$E_i\leq\T$ containing $D$ and a homomorphism $v_i:E_i\to\T$.
Only the map with index $i$ is prescribed on $E_i$.
The maps need not be continuous for the usual topology of $\T$.
For a family $(f_i)_{i<\cc}$ and $s\in\Z^{(\cc)}$, write
$f_s=\sum_i s_i f_i$; independence of integer vectors means
independence over $\Q$ in $\Q^{(\cc)}$.

At an intermediate step let
\[
 E=D\oplus U,\qquad U\leq W,\qquad |E|<\cc,
\]
be a common divisible domain.  In addition to the identity map,
we maintain homomorphisms
\begin{equation}\label{C:eq:partial-data}
 g_i:E+E_i\longrightarrow\T,
 \qquad g_i\res E_i=v_i \quad(0<i<\cc).
\end{equation}
Initially $E=D$ and $g_i=v_i$.  For a formal integer vector
$s\in\Z^{(\cc)}$, put
\[
 g_s=\sum_i s_i(g_i\res E),\qquad I_s=g_s(E),
\]
where $g_0$ is always the identity.  Each $I_s$ is divisible and
has cardinality at most $|E|$.  At this point a nonzero formal
vector may define the zero map on $E$; this causes no difficulty
because the initial common domain is countable.  After choosing
values on a new direction, we use $g_s$ also for the resulting sum
on the enlarged domain; $I_s$ continues to denote the old image
within that extension step.

Suppose $x\in W\setminus U$ is to be added to the common domain.
Define the set of \emph{forced coordinates} by
\begin{equation}\label{C:eq:forced-set}
 B(E,x)=\{i>0:x\in E+E_i\}.
\end{equation}
For $i\in B(E,x)$ the values on all of $\Q x$ are already fixed
by~\eqref{C:eq:partial-data}.  For $i\notin B(E,x)$ we have
\begin{equation}\label{C:eq:free-direct-sum}
 (E+E_i)\cap\Q x=\{0\},
\end{equation}
so any homomorphism on $\Q x$ can be adjoined by a direct-sum
extension.  Indeed, $E+E_i$ is divisible and contains $D$; by
\eqref{C:eq:divisible-domain}, its intersection with $W$ is a
$\Q$-subspace.  Formula~\eqref{C:eq:free-direct-sum} follows.

The set $B(E,x)$ can be large.  We will not try to control its
cardinality.  Instead we control, for each fixed coordinate, how
often it can be forced during the whole construction.

\begin{lemma}\label{C:lem:forced-count}
Consider an increasing transfinite sequence of common divisible
domains, each nontrivial successor step having the form
$E\subseteq E+\Q x$ with $x\in W\setminus E$.
For a fixed $i>0$, the condition $i\in B(E,x)$ holds at only
countably many steps.
\end{lemma}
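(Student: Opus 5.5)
The plan is to attach to each forcing step an element of the countable set $E_i\cap W$, and to show that distinct forcing steps receive distinct elements. Countability of $E_i$ then finishes the proof. The whole argument takes place in the fixed decomposition~\eqref{C:eq:circle-decomposition} and uses~\eqref{C:eq:divisible-domain} for the divisible groups containing $D$.

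Fix $i>0$. Every domain in the sequence has the form $E=D\oplus U$ with $U\leq W$, and $E_i=D\oplus(E_i\cap W)$ by~\eqref{C:eq:divisible-domain}. Suppose that at some nontrivial successor step $E\subseteq E+\Q x$, with $x\in W\setminus E$, we have $i\in B(E,x)$, that is, $x\in E+E_i$. Then
\[
 x=d+u+d'+w,\qquad d,d'\in D,\ u\in U,\ w\in E_i\cap W.
\]
Comparing components in $\T=D\oplus W$, and using $x\in W$, gives $d+d'=0$ and $x=u+w$. Hence $w=x-u$. I assign this element $w=w(\text{step})\in E_i\cap W$ to the step. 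It has two properties:
\begin{enumerate}
\item $w\notin E$, since otherwise $x=u+w\in E$;
\item $w\in E+\Q x$, since $w=x-u$.
\end{enumerate}

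Next I show the assignment is injective. Take two forcing steps at stages $\sigma<\sigma'$. The sequence of domains is increasing; at limit stages the domain is the union, and in any case each domain contains all earlier ones. Hence the domain at stage $\sigma'$ contains the enlarged domain $E+\Q x$ produced at stage $\sigma$. That enlarged domain contains $w(\sigma)$ by property (ii). On the other hand, property (i) applied at stage $\sigma'$ says $w(\sigma')$ is not in the domain at stage $\sigma'$. Therefore $w(\sigma)\neq w(\sigma')$.

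So the forcing steps for $i$ inject into $E_i\cap W$, which is countable because $E_i$ is. The only point requiring care is the monotonicity of the domains across limit stages, which the hypothesis of an increasing transfinite sequence provides. The main step is the componentwise decomposition isolating $w$; once it is in hand, the counting is immediate.
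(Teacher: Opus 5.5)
Your proof is correct and is essentially the paper's argument. The paper writes $x=e+c$ with $e\in E$ and $c\in E_i$, notes $c\in\bigl(E_i\cap(E+\Q x)\bigr)\setminus E$, and concludes that $E_i\cap E$ strictly increases at each forcing step; your explicit injection into $E_i$ is the same count made concrete. Your passage to $W$-components via~\eqref{C:eq:divisible-domain} is harmless but unnecessary, since $c$ itself already has both properties you use.
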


\begin{proof}
At such a step write $x=e+c$ with $e\in E$ and $c\in E_i$.
Since $x\notin E$, we have $c\notin E$, whereas
$c=x-e\in E+\Q x$.  Thus
\[
 E_i\cap E\subsetneq E_i\cap(E+\Q x).
\]
The intersections with the fixed countable group $E_i$ increase
throughout the construction.  Every strict increase uses a new
element of $E_i$, so there can be only countably many of them.
\end{proof}

Consequently, for a fixed finitely supported vector $s$, the
condition
\begin{equation}\label{C:eq:forced-support}
 \supp s\cap B(E,x)\neq\varnothing
\end{equation}
occurs at only countably many steps.  We are allowed to enlarge
$\ker g_s$ at those steps.  This is the point at which simultaneous
extension of all the prescribed maps becomes compatible with
countable kernels.

We will repeatedly use the following elementary test.  If values
on $\Q x$ have been chosen so that
\begin{equation}\label{C:eq:no-new-kernel}
 g_s(qx)\notin I_s \qquad(q\in\Q\setminus\{0\}),
\end{equation}
then the extension of $g_s$ to $E+\Q x$ has no new kernel elements.
Indeed, $g_s(e+qx)=0$ would imply $g_s(qx)\in I_s$ unless $q=0$.
Even when new kernel elements are allowed, their quotient by the
old kernel embeds in
\begin{equation}\label{C:eq:kernel-quotient}
 (E+\Q x)/E\cong\Q.
\end{equation}
Thus a step adds at most countably many cosets of the old kernel. Indeed,
restriction of the quotient map $E+\Q x\to(E+\Q x)/E$ to the new
kernel has kernel equal to the old kernel. The first isomorphism theorem
gives the asserted embedding into $\Q$.

\begin{lemma}\label{C:lem:ordinary-extension}
Given the data~\eqref{C:eq:partial-data} and $x\in W\setminus U$,
there are extensions with common domain $E+\Q x$ which preserve
all the prescribed maps and add no kernel elements for any
nonzero $s$ satisfying
\begin{equation}\label{C:eq:unforced}
 \supp s\cap B(E,x)=\varnothing.
\end{equation}
\end{lemma}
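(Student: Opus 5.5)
The plan is to keep the forced coordinates as they are and to choose the values $g_i(x)$ for the unforced coordinates $i\notin B(E,x)$ by a transfinite recursion along $\cc$, always staying inside $W$ so that the fixed $\Q$-vector space structure determines $g_i$ on all of $\Q x$.

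\emph{Forced coordinates.} For $i\in B(E,x)$ we have $x\in E+E_i$, so $E+\Q x\subseteq E+E_i$ and $g_i$ is already defined on the new common domain. We keep it unchanged, and $g_i\res E_i=v_i$ persists.

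\emph{Unforced coordinates.} For $i\notin B(E,x)$, formula~\eqref{C:eq:free-direct-sum} gives a direct sum $(E+E_i)\oplus\Q x$. Given any $y_i\in W$, we define $g_i(e+qx)=g_i(e)+qy_i$, using the fixed $\Q$-structure of $W$. This is a well-defined homomorphism extending the old $g_i$, so~\eqref{C:eq:partial-data} is preserved. Put $y_0=x$, matching $g_0=\id$. It remains to choose the $y_i$ so that~\eqref{C:eq:no-new-kernel} holds for every nonzero $s$ with $\supp s\cap B(E,x)=\varnothing$.

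\emph{Reduction to a single condition.} For such $s$ we have $g_s(qx)=q\,w_s$ with $w_s=\sum_i s_iy_i\in W$. Since $I_s$ is divisible, \eqref{C:eq:saturation} shows that $qw_s\in I_s$ for some $q\neq0$ exactly when $w_s\in I_s+D$. So the single requirement is
\[
 w_s\notin I_s+D .
\]

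\emph{The case $\supp s=\{0\}$.} Here $I_s=s_0E=E$, because $E$ is divisible. The requirement becomes $s_0x\notin E$, which holds because $E\cap W=U$ is a $\Q$-subspace and $x\in W\setminus U$.

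\emph{The case $\supp s\not\subseteq\{0\}$.} Let $i=\max\supp s>0$. The requirement reads
\[
 s_iy_i\notin -\sum_{j<i}s_jy_j+I_s+D ,
\]
and the right-hand side involves only values chosen earlier. We recurse on $0<i<\cc$, $i\notin B(E,x)$, and choose $y_i$ after all $y_j$ with $j<i$.

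\emph{Counting.} For each admissible $s$ with $\max\supp s=i$, the set of forbidden $y\in W$ is the preimage of a set of size at most $|E|+\aleph_0$ under the map $y\mapsto s_iy$. This map is a bijection of $W$, so the preimage also has size at most $|E|+\aleph_0$. The number of such $s$ is at most $|i|+\aleph_0$. Hence the total forbidden set has cardinality at most
\[
 \max(|i|,|E|,\aleph_0)<\cc ,
\]
since both $|i|<\cc$ and $|E|<\cc$. Because $|W|=\cc$, an admissible $y_i$ exists.

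This counting step is the heart of the argument, and I expect it to be the main obstacle. The naive approach, choosing all the $y_i$ independently modulo the union of all the $I_s$, fails: that union can have size $\cc$. Ordering the conditions by the largest support coordinate is what keeps each individual step below $\cc$ without any regularity assumption on $\cc$. After the recursion, every nonzero $s$ satisfying~\eqref{C:eq:unforced} obeys~\eqref{C:eq:no-new-kernel}, so its extension adds no kernel elements, as required.
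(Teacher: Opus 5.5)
Your proof is correct and takes essentially the same route as the paper. Forced coordinates are kept unchanged, and each free coordinate receives a one-parameter extension on $\Q x$. Each no-new-kernel requirement is assigned to the largest index in its support, so fewer than $\cc$ values are forbidden at each step.

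The differences are cosmetic. You take the parameter $y_i$ in $W$, using its $\Q$-structure, where the paper uses a real representative $r_i$. You also use \eqref{C:eq:saturation} to collapse the conditions over all $q\neq0$ into the single condition $w_s\notin I_s+D$, where the paper counts each $q$ separately.
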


\begin{proof}
Retain all the values at forced coordinates.  For a free coordinate
$i>0$, choose a real parameter $r_i$ and set
\[
 g_i(qx)=q r_i+\Z\qquad(q\in\Q).
\]
The zeroth coordinate remains $g_0(qx)=qx$.
For each $s$ satisfying~\eqref{C:eq:unforced} and having a nonzero
coefficient outside coordinate $0$, we require
\begin{equation}\label{C:eq:ordinary-forbidden}
 s_0qx+\sum_{i>0}s_i q r_i+\Z\notin I_s
 \quad(q\neq0).
\end{equation}
Choose the parameters at free coordinates in their ordinal order.
When $r_i$ is chosen, impose the requirements whose largest
nonzero nonzeroth coefficient has index $i$.  At this stage there
are at most $\max(\aleph_0,|i|)$ relevant vectors $s$.
For each such vector, nonzero $q\in\Q$, and prescribed value in
$I_s$, the equation forbidden in~\eqref{C:eq:ordinary-forbidden}
has at most countably many real solutions for $r_i$, because its
coefficient $s_iq$ is nonzero.  Therefore the total number of
forbidden parameters is at most
\begin{equation}\label{C:eq:parameter-bound}
 \max(\aleph_0,|i|,|E|)<\cc.
\end{equation}
A parameter remains available.

Use~\eqref{C:eq:free-direct-sum} to extend each free coordinate.
The forced coordinates already have the required domain.
Condition~\eqref{C:eq:ordinary-forbidden} is precisely the test
\eqref{C:eq:no-new-kernel} for the vectors under consideration.
A nonzero integer multiple of $g_0$ has its kernel in $D\subseteq E$
and does not acquire new kernel elements either.
\end{proof}

\begin{remark}
The conclusion of Lemma~\ref{C:lem:ordinary-extension} intentionally
omits vectors whose support meets $B(E,x)$.  Their prescribed
values need not leave a parameter available for
\eqref{C:eq:ordinary-forbidden}.  Lemma~\ref{C:lem:forced-count}, not
an additional avoidance condition, controls those possible kernel
increases.
\end{remark}

We next describe surjectivity, zero-image, and Cantor-set extensions.  A
surjectivity requirement imposes one linear equation on a new
direction.  A Cantor-set requirement prescribes countably many
coordinate values at a carefully chosen point.  In both cases the
remaining free coordinates are chosen as in
Lemma~\ref{C:lem:ordinary-extension}.

Fix $\tau=(n_i)\in\Z^{(\cc)}\setminus\Z e_0$, where $e_0$
is the unit vector at coordinate $0$.  Before treating this
requirement, assume that
\begin{equation}\label{C:eq:target-ready}
 E_i\subseteq E\quad(i\in\supp\tau,\ i>0).
\end{equation}
Thus every coordinate used by $\tau$, other than $0$, will be free
for every $x\in W\setminus U$.
Let $t\in\T\setminus I_\tau$, and choose a homomorphism
$b:\Q\to\T$ with $b(1)=t$.  For example, a real representative
of $t$ defines such a map.  Choose a pivot index $j>0$ with
$n_j\neq0$.  After choosing the other coordinate values on
$\Q x$, define
\begin{equation}\label{C:eq:pivot}
 g_j(qx)=b(q/n_j)-n_0(q/n_j)x
       -\sum_{i\neq0,j}n_i g_i((q/n_j)x).
\end{equation}
This is a homomorphism on $\Q x$, and it gives
\begin{equation}\label{C:eq:target-achieved}
 g_\tau(qx)=b(q),\qquad g_\tau(x)=t.
\end{equation}

\Needspace{5\baselineskip}
For an extension from $E$ to $E+\Q x$, write
\[
 I_\tau^{\mathrm{new}}:=g_\tau(E+\Q x),
\]
where $g_\tau$ denotes the extended map, while $I_\tau$
continues to denote its old image on $E$.

\begin{lemma}\label{C:lem:onto-extension}
Under~\eqref{C:eq:target-ready}, the point $x\in W\setminus U$ and
the extensions in~\eqref{C:eq:pivot} can be chosen so that:
\begin{enumerate}
\item  $t\in I_\tau^{\mathrm{new}}$;
\item if $s$ is independent of $\tau$ and satisfies
$\supp s\cap B(E,x)=\varnothing$, then $g_s$ acquires no new
kernel elements;
\item if $0\neq s\in\Z^{(\cc)}$ is proportional to $\tau$ and $t+I_\tau$ has infinite
order, then $g_s$ acquires no new kernel elements.
\end{enumerate}
\end{lemma}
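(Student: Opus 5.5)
The plan is to make the choices in a fixed order. First choose the new direction $x\in W\setminus U$. Then choose the real parameters $r_i$ at the free coordinates $i\neq j$, in ordinal order, exactly as in Lemma~\ref{C:lem:ordinary-extension}. Finally define the pivot coordinate $g_j$ by~\eqref{C:eq:pivot}.

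By~\eqref{C:eq:target-ready}, every nonzero coordinate of $\tau$ is free for every choice of $x$, so the pivot formula is legitimate. The free coordinates are extended by the direct sums~\eqref{C:eq:free-direct-sum}, and the forced coordinates keep their values. Assertion (i) is then immediate from~\eqref{C:eq:target-achieved}.

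\emph{Assertion (iii).} Let $s$ be proportional to $\tau$, so that $Ns=M\tau$ for nonzero integers $M,N$; hence $Ng_s=Mg_\tau$ on the new domain. Suppose $g_s(e+qx)=0$ with $e\in E$ and $q\neq0$. Multiplying by $N$ gives
\[
Mg_\tau(e)+b(Mq)=0,
\]
so $b(Mq)\in I_\tau$. Write $q=u/v$. Multiplying by $v$ gives $uMt\in I_\tau$, because $vb(Mq)=b(uM)=uMt$. This contradicts the hypothesis that $t+I_\tau$ has infinite order in $\T/I_\tau$. Hence $q=0$, and by~\eqref{C:eq:no-new-kernel} there is no new kernel element.

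\emph{Assertion (ii).} Fix $s$ independent of $\tau$ whose support avoids $B(E,x)$, so all its nonzero coordinates are free. Substituting~\eqref{C:eq:pivot} gives
\[
g_s(qx)=(s_j/n_j)\,b(q)+s'_0\,qx+\sum_{i\neq0,j}s'_i\,q r_i,
\qquad s'=s-(s_j/n_j)\tau .
\]
Here $s'\in\Q^{(\cc)}$ is nonzero by independence and vanishes at $j$; rescale $s$ if needed to clear denominators. We must ensure $g_s(qx)\notin I_s$ for all $q\neq0$. There are two cases.

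\begin{enumerate}
\item \emph{$s'$ has a nonzero coefficient at some $i\neq0,j$.} Take $i$ to be the largest such index. Impose the requirement when $r_i$ is chosen, as in Lemma~\ref{C:lem:ordinary-extension}. The forbidden equation has nonzero coefficient $s'_iq$, so it has only countably many real solutions for each $q$ and each value in $I_s$. This yields the bound~\eqref{C:eq:parameter-bound}, so a parameter remains available. The dependence on $x$ and $b$ is only through already fixed quantities.
\item \emph{$s'=c e_0$ with $c\neq0$.} Then $s=s_je_j/n_j\cdot n_j/s_j\ldots$ is determined by the two numbers $s_j$ and $c$, so there are only countably many such $s$. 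The requirement becomes
\[
c\,qx\notin I_s-(s_j/n_j)\,b(q)\qquad(q\neq0),
\]
with $|I_s|\leq|E|<\cc$. Multiplication by a nonzero rational is injective on $W$. So for each such $s$ and $q$, fewer than $\cc$ points $x\in W$ are excluded, and it is enough to compare $W$-components against the right-hand side.
\end{enumerate}

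Since $|W\setminus U|=\cc$ and $\operatorname{cf}(\cc)>\aleph_0$, some admissible $x$ remains, and this $x$ is chosen first. With both cases handled, the test~\eqref{C:eq:no-new-kernel} gives (ii).

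The main obstacle is the bookkeeping that makes the choice of $x$ precede everything else, even though $B(E,x)$ depends on $x$. This is harmless for two reasons. The case-(2) constraints on $x$ refer only to $s$ and the countably many pairs $(s_j,c)$, not to $B(E,x)$. And the hypothesis $\supp s\cap B(E,x)=\varnothing$ is checked only after $x$ has been fixed.
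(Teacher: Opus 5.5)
Your proposal is correct and follows essentially the same route as the paper. Your $s'=s-(s_j/n_j)\tau$ is the paper's $\delta$ divided by $n_j$; Case~2 is its choice of $x$ against the countably many vectors in $\spanQ\{e_0,\tau\}$, Case~1 is its successive choice of the $r_i$, and your direct argument for (iii) from $Ns=M\tau$ replaces the paper's preliminary step $I_s=I_\tau$ equivalently. Two small repairs: the garbled formula in Case~2 should read $s=(s_j/n_j)\tau+c\,e_0$, and in Case~1 the bound of at most $\max(\aleph_0,|i|)$ vectors per index relies on their support lying in $[0,i]\cup\supp\tau$, not merely in $[0,i]$ as in Lemma~\ref{C:lem:ordinary-extension}.
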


\begin{proof}
For $s=(m_i)$ set
\[
 \delta_i=n_jm_i-m_jn_i.
\]
Multiplying the expression for $g_s(qx)$ by $n_j$ gives
\begin{equation}\label{C:eq:eliminated-pivot}
 n_jg_s(qx)=m_jb(q)+\delta_0qx
             +\sum_{i\neq0,j}\delta_i g_i(qx).
\end{equation}
First consider the independent vectors $s$ for which
$\delta_i=0$ at every $i\neq0,j$.  Such vectors belong to
\[
 \spanQ\{e_0,\tau\}\cap\Z^{(\cc)},
\]
a countable set, and $\delta_0\neq0$.  Choose $x\in W\setminus U$
so that for all of these vectors and all $q\in\Q\setminus\{0\}$,
\begin{equation}\label{C:eq:choose-direction}
 m_jb(q)+\delta_0qx\notin I_s.
\end{equation}
For fixed $s,q$, the map $x\mapsto\delta_0qx$ is one-to-one on
$W$, so at most $|I_s|$ choices of $x$ are forbidden.
In total fewer than $\cc$ choices are excluded.  Notice also that
the support of each of these $s$ is contained in
$\{0\}\cup\supp\tau$; by~\eqref{C:eq:target-ready} it contains no
forced coordinate.

Having chosen $x$, retain the forced coordinate values.  At every
other free coordinate $i\neq j$, put
$g_i(qx)=q r_i+\Z$ and choose $r_i$ successively.  For each
independent $s$ whose support avoids $B(E,x)$ and which has a
nonzero coefficient $\delta_i$ with $i\neq0,j$, require
\begin{equation}\label{C:eq:free-delta-avoidance}
 m_jb(q)+\delta_0qx
       +\sum_{i\neq0,j}\delta_i q r_i+\Z\notin I_s
 \quad(q\neq0).
\end{equation}
Assign each requirement to the largest index of a nonzero free
$\delta$-coefficient. At index $i$, any vector involved has support
in the free coordinates up to and including $i$, together with
the fixed finite set $\{0\}\cup\supp\tau$. The number of vectors is therefore at
most $\max(\aleph_0,|i|)$, and the same bound
\eqref{C:eq:parameter-bound} on forbidden real parameters applies.
There are no nonzero forced terms in~\eqref{C:eq:eliminated-pivot}
for these vectors: both $s$ and $\tau$ avoid $B(E,x)$.

Finally define the pivot by~\eqref{C:eq:pivot}.  It is a free
coordinate by~\eqref{C:eq:target-ready}, so the definition is
compatible with its prescribed map.  Part (1) follows from
\eqref{C:eq:target-achieved}. Conditions
\eqref{C:eq:choose-direction} and~\eqref{C:eq:free-delta-avoidance}
show that $n_jg_s(qx)\notin I_s$ in part (2).  Since $I_s$ is a
subgroup, this implies $g_s(qx)\notin I_s$, and
\eqref{C:eq:no-new-kernel} applies.

For (3), write $a s=c\tau$ with nonzero integers $a,c$.
The images of the old divisible domain are divisible. Multiplication by
a nonzero integer is therefore onto each old image, and $ag_s=cg_\tau$
gives
\begin{equation}\label{C:eq:proportional-images}
 I_s=aI_s=cI_\tau=I_\tau.
\end{equation}
If $g_s(qx)\in I_s$ and $q=p/l\neq0$, then
$a g_s(qx)=c b(q)\in I_\tau$, and multiplication by $l$ gives
$cp\,t\in I_\tau$.  This contradicts the infinite order of
$t+I_\tau$.
\end{proof}

Only one kind of surjectivity requirement can therefore enlarge
the kernels in its own rational line: the case
\begin{equation}\label{C:eq:torsion-growth}
 t\notin I_\tau,\qquad t+I_\tau\text{ has finite order}.
\end{equation}
Whenever this happens, choose a positive $k$ with $kt\in I_\tau$
and choose $a\in I_\tau$ with $ka=kt$.  Then
\begin{equation}\label{C:eq:new-torsion-image}
 t-a\in D\setminus I_\tau,
 \qquad t-a\in I_\tau^{\mathrm{new}}.
\end{equation}
Thus the intersection of the common image in that rational line
with the countable group $D$ strictly increases.  This will bound
the total number of such steps for any fixed line.

For a primitive vector $\tau\notin\Z e_0$, the next lemma uses
the extension formula of Lemma~\ref{C:lem:onto-extension} with $b=0$.
It produces a nonzero rational direction in the target kernel.

\begin{lemma}\label{C:lem:zero-extension}
Let $\tau\in\Z^{(\cc)}\setminus\Z e_0$ be primitive. Suppose $\alpha<\cc$, $\supp\tau\subseteq[0,\alpha]$, and
$E_i\subseteq E$ for every $0<i\leq\alpha$.
There is an extension on a direction $x\in W\setminus U$ such
that $g_\tau(\Q x)=\{0\}$ and, for every $s$ independent of $\tau$
whose support avoids $B(E,x)$,
\begin{equation}\label{C:eq:zero-independent}
 g_s(x)\notin D,
\end{equation}
while $g_s$ acquires no new kernel elements for all these $s$.
In particular,~\eqref{C:eq:zero-independent} holds for every such
$s$ supported in $[0,\alpha]$.
\end{lemma}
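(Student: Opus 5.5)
We mirror the proof of Lemma~\ref{C:lem:onto-extension} with $b=0$. Write $\tau=(n_i)$, fix a pivot $j>0$ with $n_j\neq0$, and for $s=(m_i)$ put $\delta_i=n_jm_i-m_jn_i$. The hypothesis $E_i\subseteq E$ for $0<i\leq\alpha$ guarantees, as in~\eqref{C:eq:target-ready}, that every coordinate in $\supp\tau\setminus\{0\}$ is free for every direction $x\in W\setminus U$. After the other values on $\Q x$ are chosen, define the pivot by~\eqref{C:eq:pivot} with $b=0$. By~\eqref{C:eq:target-achieved} this gives $g_\tau(qx)=0$ for all $q$, so $g_\tau(\Q x)=\{0\}$. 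For independent $s$ whose support avoids $B(E,x)$, formula~\eqref{C:eq:eliminated-pivot} becomes
\[
 n_jg_s(qx)=\delta_0qx+\sum_{i\neq0,j}\delta_ig_i(qx),
\]
with no forced terms. Here we use that neither $s$ nor $\tau$ meets $B(E,x)$.

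The strengthened requirement is that $n_jg_s(qx)\notin I_s+D$ for all $q\neq0$. This condition gives both conclusions. First, since $I_s+D$ is a group, $g_s(qx)\notin I_s$, so~\eqref{C:eq:no-new-kernel} applies and no new kernel elements appear. Second, taking $q=1$ gives $g_s(x)\notin D$, because $n_jg_s(x)\in D$ would follow from $g_s(x)\in D$.

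The set $I_s+D$ has cardinality $\le\max(|E|,\aleph_0)<\cc$. We proceed in two stages.
\begin{enumerate}
\item \emph{The countably many $s$ with $\delta_i=0$ for all $i\neq0,j$.} These lie in $\spanQ\{e_0,\tau\}$, and independence of $s$ from $\tau$ forces $\delta_0\neq0$. We choose $x\in W\setminus U$ so that $\delta_0qx\notin I_s+D$ for each such $s$ and each $q\neq0$. Each pair $(s,q)$ forbids at most $|I_s+D|$ values of $x$, since multiplication by $\delta_0q$ is injective on $W$. The total is fewer than $\cc=|W\setminus U|$. This is where we use that $\tau\notin\Z e_0$; primitivity is not needed, only that $n_j\neq0$.
\item \emph{All remaining $s$.} At the other free coordinates we set $g_i(qx)=qr_i+\Z$ and choose the reals $r_i$ by transfinite recursion. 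Each requirement is assigned to the largest index of a nonzero free $\delta$-coefficient. The counting is the same as in~\eqref{C:eq:parameter-bound}, with $I_s$ replaced by $I_s+D$. For fixed $s$, fixed $q$, and a fixed target value, the forbidden set of $r_i$ is countable because $\delta_iq\neq0$. So fewer than $\cc$ parameters are excluded at each index, and one remains available.
\end{enumerate}

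The final sentence of the lemma follows because $B(E,x)\cap[0,\alpha]=\varnothing$. Indeed, for $0<i\leq\alpha$ we have $E_i\subseteq E$, so $E+E_i=E\not\ni x$. Hence every $s$ supported in $[0,\alpha]$ avoids $B(E,x)$. This also covers $\tau$ itself.

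The one genuinely new point compared with Lemma~\ref{C:lem:onto-extension} is replacing $I_s$ by $I_s+D$ while keeping the cardinality bound below $\cc$; this is the step to verify with care. We also check that extending $g_0$ as the identity creates no kernel, since $\ker g_0$ lies in $D\subseteq E$.
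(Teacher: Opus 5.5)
Your proposal is correct and follows essentially the paper's route: rerun the proof of Lemma~\ref{C:lem:onto-extension} with $b=0$, with the same two classes of vectors $s$, the same pivot formula, and the same parameter counting. The only difference is that you impose one strengthened condition $n_jg_s(qx)\notin I_s+D$ for all $q\neq0$. The paper instead keeps the old condition $\notin I_s$ and adds the requirement $\notin D$ only at $q=1$. For the first class, $\delta_0x\in W\setminus\{0\}$ is automatically of infinite order, so the paper needs nothing extra there.
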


\begin{proof}
Repeat the proof of Lemma~\ref{C:lem:onto-extension} with $b=0$.
For the vectors with all free $\delta$-coefficients zero,
\eqref{C:eq:eliminated-pivot} gives
$n_jg_s(x)=\delta_0x$.  The right-hand side is a nonzero element
of $W$ and has infinite order.  Thus~\eqref{C:eq:zero-independent}
holds automatically for these vectors.

For the remaining vectors, in addition to
\eqref{C:eq:free-delta-avoidance} require the right-hand side of
\eqref{C:eq:eliminated-pivot} at $q=1$ not to belong to $D$.
This only adds countably many forbidden values for each vector,
so the parameter bound is unchanged.  If $g_s(x)$ were torsion,
then $n_jg_s(x)$ would be torsion, which has just been excluded.
The old avoidance conditions still prevent new kernel elements.
Finally, every nonzeroth coordinate at most $\alpha$ is free
because its prescribed domain is contained in $E$.
\end{proof}

For any full family $(f_i)_{i<\cc}$ extending the maps furnished by
Lemma~\ref{C:lem:zero-extension}, put $f(x)=(f_i(x))_{i<\cc}$.
The witness from that lemma satisfies
\begin{equation}\label{C:eq:prefix-density}
 \cl_{\T^{[0,\alpha]}}
 \{p_{[0,\alpha]}(f(qx)):q\in\Q\}
 =\ker\chi_\tau.
\end{equation}
Indeed, the integer characters vanishing on the displayed
subgroup are exactly $\Z\tau$: independent ones are excluded
by~\eqref{C:eq:zero-independent}, and primitive proportional ones
are the integer multiples of $\tau$.  Now apply
\eqref{C:eq:annihilator}.
Only this bounded prefix of coordinates is involved.  No density
in the kernel on the full final product is asserted.

\begin{lemma}\label{C:lem:cantor-extension}
Let $J\subseteq\cc$ be countable with $0\in J$, and let
$P\subseteq\T^J$ be a character-injective Cantor set.  Suppose
$E_i\subseteq E$ for every $i\in J\setminus\{0\}$.
There is an extension with common domain $E+\Q x$, for some
$x\in W\setminus U$, whose evaluation on this domain has a
$J$-projection meeting $P$.  Every nonzero vector $s\in\Z^{(\cc)}$ whose support avoids $B(E,x)$ acquires no new kernel elements.
\end{lemma}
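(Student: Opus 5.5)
The plan is to choose the new direction $x$ from a point $z\in P$, and to set the values at the coordinates of $J$ so that the evaluation of some element of $E+\Q x$ is exactly $z$. The kernel conditions for vectors supported in $J$ are handled by the choice of $z$. Those involving coordinates outside $J$ are handled by the successive parameter choice of Lemma~\ref{C:lem:ordinary-extension}.

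\emph{Step 1: choosing $z$.} Since $P$ is character-injective, every nonzero integer character $\chi_s$ with $\supp s\subseteq J$ is one-to-one on $P$. This includes $\chi_{e_0}=p_{\{0\}}$. As a Cantor set, $P$ has cardinality $\cc$. We choose $z\in P$ with:
\begin{enumerate}
\item $z_0\notin E$;
\item $\chi_s(z)\notin I_s+D$ for every nonzero $s\in\Z^{(J)}$ with $s\notin\Z e_0$.
\end{enumerate}
There are countably many such $s$. Each set $I_s+D$ has cardinality at most $\max(\aleph_0,|E|)<\cc$. By injectivity, each condition excludes fewer than $\cc$ points of $P$. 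In total fewer than $\cc$ points are excluded, so a suitable $z$ exists.

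\emph{Step 2: the direction and the values on $J$.} Write $z_0=d+x$ according to~\eqref{C:eq:circle-decomposition}, with $d\in D$ and $x\in W$. Since $D\subseteq E$ and $z_0\notin E$, we get $x\in W\setminus U$ and $E+\Q x=E+\Q z_0$. For $i\in J\setminus\{0\}$ we have $E_i\subseteq E$, so $x\notin E+E_i$. Hence $B(E,x)\cap J=\varnothing$, and every such coordinate is free. For each such $i$, pick a real representative $r_i$ of $z_i-g_i(d)$, where $g_i(d)$ is already defined because $d\in E$. Set $g_i(qx)=qr_i+\Z$ and extend by~\eqref{C:eq:free-direct-sum}. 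Then $g_i(z_0)=z_i$, and $g_0(z_0)=z_0$. So the evaluation at $z_0\in E+\Q x$ has $J$-projection $z\in P$.

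\emph{Step 3: no new kernel elements for $s$ supported in $J$.} By the test~\eqref{C:eq:no-new-kernel}, it suffices to show $g_s(qx)\notin I_s$ for $q=p/l\neq0$. If instead $g_s(qx)\in I_s$, multiplying by $l$ gives $p\,g_s(x)\in I_s$. Now
\[
g_s(x)=\chi_s(z)-g_s(d),\qquad g_s(d)\in I_s,
\]
so $p\,\chi_s(z)\in I_s$. By~\eqref{C:eq:saturation} this means $\chi_s(z)\in I_s+D$, contradicting condition (2) of Step~1. For $s\in\Z e_0$ the kernel lies in $D\subseteq E$, as noted at the end of the proof of Lemma~\ref{C:lem:ordinary-extension}.

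\emph{Step 4: the remaining vectors.} The coordinates in $B(E,x)$ keep their prescribed values. The free coordinates outside $J$ receive values $g_i(qx)=qr_i+\Z$, with the $r_i$ chosen successively in ordinal order exactly as in Lemma~\ref{C:lem:ordinary-extension}. Each vector whose support avoids $B(E,x)$ and meets the complement of $J$ is assigned to its largest coordinate outside $J$. The relevant vectors at index $i$ are supported in the countable set $J$ together with free indices $\le i$, so there are at most $\max(\aleph_0,|i|)$ of them. The values on $J$ are already fixed, so they contribute only constants. Since the coefficient $s_iq$ is nonzero, each forbidden relation excludes only countably many reals. The bound~\eqref{C:eq:parameter-bound} then leaves a parameter available.

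The main obstacle is Step~1. The issue is that $P$ need not meet $W$, and fixing a point of $P$ rigidly fixes the values at the coordinates of $J$. Passing to the $W$-component of $z_0$, and using saturation to turn the kernel condition into avoiding $I_s+D$, resolves this.
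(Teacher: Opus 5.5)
Your proof is correct and follows the paper's own argument. You choose $z\in P$ off the fewer than $\cc$ points with $\chi_s(z)\in I_s+D$, split $z_0=d+x$ along $\T=D\oplus W$, and set the $J$-coordinates on $\Q x$ so that the evaluation at $z_0$ has $J$-projection $z$. You then use \eqref{C:eq:saturation} for vectors supported in $J$ and pick the remaining free parameters in ordinal order as in Lemma~\ref{C:lem:ordinary-extension}, with only cosmetic differences from the paper: you treat $s\in\Z e_0$ separately rather than reading $z_0\notin E$ off the case $s=e_0$, and you define the $J$-coordinates by explicit real representatives instead of invoking Lemma~\ref{C:lem:divisible}.
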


\begin{proof}
For each nonzero $s\in\Z^{(J)}$, injectivity on $P$ gives
\[
 |P\cap\chi_s^{-1}(I_s+D)|\leq\max(\aleph_0,|E|).
\]
There are only countably many such vectors.  Since $|P|=\cc$,
choose $z\in P$ with
\begin{equation}\label{C:eq:cantor-choice}
 \chi_s(z)\notin I_s+D
 \quad(0\neq s\in\Z^{(J)}).
\end{equation}
For $s=e_0$ we have $I_{e_0}=E$ and $D\subseteq E$, so $z_0\notin E$.
Write $z_0=d+x$ with $d\in D$ and $x\in W\setminus U$.
All coordinates in $J\setminus\{0\}$ are free for this direction.
For each of them choose a homomorphism $h_i:\Q x\to\T$ with
\[
 h_i(x)=z_i-g_i(d),
\]
using Lemma~\ref{C:lem:divisible} on the cyclic subgroup $\Z x$.
Adjoin these maps to the respective $g_i$.  The evaluation at
$z_0=d+x$ now has $J$-projection $z$.

For any nonzero $s$ supported in $J$,
\[
 g_s(x)+I_s=\chi_s(z)+I_s.
\]
By~\eqref{C:eq:saturation} and~\eqref{C:eq:cantor-choice}, this coset
has infinite order.  If an element $e+qx$, with $e\in E$ and
$q=p/l\neq0$, belonged to the new kernel, then
\[
 p g_s(x)=l g_s(qx)\in I_s,
\]
a contradiction.  Hence these kernels do not increase.

At the forced coordinates outside $J$, keep the old values.
At the remaining coordinates outside $J$, put
$g_i(qx)=q r_i+\Z$.  Choose the parameters in ordinal order,
imposing~\eqref{C:eq:no-new-kernel} for the vectors whose support
avoids $B(E,x)$ but is not contained in $J$.  Assign a vector to
its largest nonzero coefficient outside $J$.  Its coefficients
on $J$ contribute fixed constants.  At index $i$ there are at most
$\max(\aleph_0,|J|,|i|)$ vectors to consider, so again fewer than
$\cc$ real parameters are forbidden.  This proves the remaining
kernel assertions and preserves all the prescribed maps.
\end{proof}

As in the ordinary extension, the conclusion does not claim that
\emph{every} kernel is unchanged at a Cantor-set step.  The possible
exceptions are exactly the vectors meeting $B(E,x)$; each fixed
vector has only countably many such exceptions over the entire
construction.

\begin{theorem}[Connected Baire reflexive example]\label{C:thm:main}
There exists in ZFC a dense subgroup $G\leq\T^{\cc}$ such that $G$ is
connected, Baire, and Pontryagin reflexive, every countable subgroup of
$G$ is $h$-embedded and closed, and every uncountable subgroup of $G$
is dense. Moreover, $G$ can be chosen algebraically isomorphic to $\T$,
every compact subset of $G$ is finite, every nonzero continuous character
of $G$ is surjective with countable kernel, and every nontrivial Hausdorff
quotient of $G$ is nonseparable.
\end{theorem}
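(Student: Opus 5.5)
We will build $G$ as the image of the abstract circle group under an evaluation map $f=(f_i)_{i<\cc}:\T\to\T^{\cc}$, where $f_0=\id$ and each $f_i$ ($i>0$) is an abstract homomorphism $\T\to\T$. We obtain the $f_i$ as unions of the partial maps $g_i$ along a transfinite recursion of length $\cc$. The common divisible domain $E=D\oplus U$ grows by one rational direction $\Q x$ at a time, using the extension lemmas of this section, and exhausts $\T$ at the end. Then $G=f(\T)$ is algebraically isomorphic to $\T$, since $f_0=\id$ makes $f$ injective.

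Before the recursion, we fix bookkeeping enumerations of length $\cc$, using \eqref{C:eq:cardinals}, of four kinds of tasks, each listed cofinally often where needed:
\begin{enumerate}
\item all points of $W$, to be absorbed into $E$;
\item all pairs $(C,v)$ with $C\leq\T$ countable divisible, $C\supseteq D$, and $v:C\to\T$, assigned as prescribed data $(E_i,v_i)$ to indices $i>0$ so that every pair occurs;
\item all pairs $(\tau,t)$ with $\tau\notin\Z e_0$ primitive and $t\in\T$;
\item all pairs $(J,P)$ with $J$ countable containing $0$ and $P\subseteq\T^J$ a character-injective Cantor set, together with all primitive $\tau$ and the countably many basic open sets in the relevant prefix.
\end{enumerate}
Since $\operatorname{cf}(\cc)>\aleph_0$, each task is handled at a stage after which \eqref{C:eq:target-ready}, or the hypothesis $E_i\subseteq E$ of Lemmas~\ref{C:lem:zero-extension} and~\ref{C:lem:cantor-extension}, holds. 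At such a stage we first adjoin $E_i$ by ordinary extensions (Lemma~\ref{C:lem:ordinary-extension}). At task stages we apply the corresponding lemma:
\begin{enumerate}
\item Lemma~\ref{C:lem:onto-extension} to put $t$ into the image of $g_\tau$;
\item Lemma~\ref{C:lem:cantor-extension} to make $p_J(G)$ meet $P$;
\item Lemma~\ref{C:lem:zero-extension} together with \eqref{C:eq:prefix-density}, to produce points of $G$ lying in a prescribed fiber of $\chi_\tau$ on a prefix $[0,m]$ and near a prescribed point of it.
\end{enumerate}
For the last item, translate the subgroup $\{p_{[0,m]}f(qx)\}$, which is dense in $\ker\chi_\tau$, by a point of $G$ whose $\chi_\tau$-value is $t$; this gives density of $p_{[0,m]}(G)\cap\chi_\tau^{-1}(t)$. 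The main accounting claim is that every $f_s$ with $s\neq0$ has countable kernel. Kernels grow only at stages where $\supp s$ meets $B(E,x)$, which happens countably often by Lemma~\ref{C:lem:forced-count}, or at torsion-growth steps \eqref{C:eq:torsion-growth}, which happen countably often per line because $D$ is countable, by \eqref{C:eq:new-torsion-image}. Each such step adds only countably many cosets by \eqref{C:eq:kernel-quotient}. Also, $f_s\not\equiv0$ once the domain is large.

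The verifications then run as follows.
\begin{enumerate}
\item \emph{$h$-embedding.} A countable subgroup of $G$ is $f(C_0)$ with $C_0$ countable. Given $h:f(C_0)\to\T$, extend it to a countable divisible $C\supseteq C_0+D$ (Lemma~\ref{C:lem:divisible}). Some index $i$ has $(E_i,v_i)=(C,h\circ f)$, and $f_i$ restricted to $C$ is $v_i$, so the coordinate projection $p_i$ extends $h$. Closedness follows from Lemma~\ref{lem:closed}.
\item \emph{Density and characters.} Every nonzero continuous character has the form $\chi_s$ by \eqref{C:eq:annihilator} and has countable kernel on $G$. Hence an uncountable subgroup is annihilated by no nonzero $\chi_s$, which gives density. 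Surjectivity comes from the target tasks, applied to the primitive part of $s$ and then using divisibility of $\T$.
\item \emph{Connectedness.} This is Corollary~\ref{C:cor:global-criterion}, whose hypotheses are exactly the Cantor and fiber-density tasks, together with $p_{\{0\}}(G)=\T$.
\item \emph{Baire.} Apply Lemma~\ref{B:lem:baire}. A dense $G_\delta$ set $R\subseteq\T^J$ contains a character-injective Cantor set by Lemma~\ref{C:lem:cantor-gdelta}, and $p_J(G)$ meets it. Projections onto $J$ not containing $0$ are handled by enlarging $J$.
\item \emph{Compactness, reflexivity, quotients.} Compact subsets are finite by Lemma~\ref{B:lem:countable}. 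Reflexivity follows from Corollary~\ref{B:cor:reflexive}, and nonseparability of nontrivial quotients from Lemma~\ref{lem:quotient-obstruction}.
\end{enumerate}

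The main obstacle is the kernel bookkeeping. Each vector $s$ must suffer only countably many kernel-enlarging steps over the whole construction, even though $B(E,x)$ is uncontrolled in size and surjectivity tasks force values. The remaining danger is ensuring that the target tasks for $\tau$ with $t$ of finite order modulo $I_\tau$ do not recur uncountably often in one rational line. We handle this through \eqref{C:eq:new-torsion-image}: in such a step the image of the line meets $D$ in a strictly larger set, and since $D$ is countable this can happen only countably often per line.
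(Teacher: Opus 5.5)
Your plan follows the paper's construction almost step for step: the same recursion on rational directions, the same extension lemmas and ready-stage scheduling, and the same final verifications. However, the kernel accounting, which you rightly call the crux, misses one source of growth.

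You claim that $\ker g_s$ grows only when $\supp s$ meets $B(E,x)$ or at a torsion-growth surjectivity step. The zero-image extension of Lemma~\ref{C:lem:zero-extension}, which you use for fiber density, is a third source. It is designed so that $g_\tau(\Q x)=\{0\}$, so every $s\in\Z\tau$ acquires the whole new line $\Q x$ in its kernel. This happens even though $\supp\tau$ avoids $B(E,x)$ and no surjectivity is involved. Countability of $\ker f_\tau$ therefore requires that such steps occur only countably often on each rational line. The paper performs exactly one per line, for a fixed primitive representative, and counts it as the third growth event in Step~3.

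Your scheduling does not guarantee this. Suppose the $\tau$-tasks of your item~(4) are among those ``listed cofinally often.'' Then $\ker f_\tau$ contains $\cc$ linearly independent lines $\Q x$. So $f(\ker f_\tau)$ is an uncountable subgroup of $G$ lying in the proper closed subgroup $\ker\chi_\tau$. This contradicts both the density of uncountable subgroups and the countable-kernel claim.

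The repair is simple. As your own translation argument shows, one witness line for $\tau$ already gives density in every fiber $\chi_\tau^{-1}(t)$ of $\T^{[0,\max\supp\tau]}$, so enumerating basic open sets is unnecessary. Perform one zero-image step per line and add these steps to your list of kernel-growth events.

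You should also make explicit the bound $|E|<\cc$ that every extension lemma requires. It follows, as in \eqref{C:eq:domain-size-bound}, if only $E_\alpha$ and one point of $W$ are imported at stage $\alpha$ and each requirement $T$ waits until stage $\max\{\xi(T),\sup J_T\}$. It is not automatic if all $E_i$ with $0<i\le\max\supp\tau$ are adjoined on demand when a task comes up. For singular $\cc$, fewer than $\cc$ tasks could then import every $E_i$ and exhaust $\T$. With these two adjustments your argument coincides with the paper's proof.
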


\begin{proof}
The construction has three tasks. Prescribed extensions on countable
subgroups will give $h$-embeddedness. Surjective integer combinations
with countable kernels will ensure density of uncountable subgroups.
Cantor-set intersections and rational kernel witnesses will provide
connectedness; the same Cantor-set intersections will give the Baire
property. We carry out all these tasks in a single recursion and then
apply the compactness and duality results of Section~\ref{sec:prelim}.

\medskip\noindent\textbf{Step 1. Prescribed domains and target properties.}

Enumerate all pairs $(C_i,u_i)$, where $C_i\leq\T$ is countable and
$u_i:C_i\to\T$ is a homomorphism, by the indices $0<i<\cc$.
For each $i$, choose a countable divisible subgroup $E_i$ containing
$C_i\cup D$, and extend $u_i$ to $v_i:E_i\to\T$ by
Lemma~\ref{C:lem:divisible}. These are the data for the extension
lemmas. Each $v_i$ is prescribed only for its own coordinate; prescribing
all coordinates on the union of the $E_i$ would leave no room for the
geometric requirements.

We shall construct homomorphisms $f_i:\T\to\T$ with the following
properties:
\begin{enumerate}
\item\label{C:req:extension} $f_0=\id$ and $f_i\res E_i=v_i$ for $i>0$;
\item\label{C:req:kernels} every $f_s$ with $0\neq s\in\Z^{(\cc)}$ is surjective and
has a countable kernel;
\item\label{C:req:cantor} if $J\subseteq\cc$ is countable and contains $0$, then
\[
 \{(f_i(x))_{i\in J}:x\in\T\}
\]
meets every character-injective Cantor subset of $\T^J$;
\item\label{C:req:witness} if $\tau\in\Z^{(\cc)}$ is primitive and
$m=\max\supp\tau>0$, there is an $x_\tau\in W\setminus\{0\}$
such that $\Q x_\tau\subseteq\ker f_\tau$ and
\begin{equation}\label{C:eq:prefix-witness}
 f_s(x_\tau)\notin D
\end{equation}
for every $s\in\Z^{([0,m])}$ independent of $\tau$.
\end{enumerate}

\medskip\noindent\textbf{Step 2. Scheduling and the transfinite recursion.}

The scheduling ensures that the prescribed domains needed for each
geometric requirement are already in the common domain and that every
intermediate common domain has cardinality less than $\cc$.
Choose one primitive representative, with either sign fixed, from
each rational line other than $\Q e_0$.

Fix an enumeration $(w_\alpha)_{\alpha<\cc}$ of $W$.
In addition to importing every $E_i$ and every $w_\alpha$, use the
following requirements:
\begin{enumerate}
\item for every $\tau\in\Z^{(\cc)}\setminus\Z e_0$ and every
$t\in\T$, put $t$ in the image of $f_\tau$;
\item for each rational line other than $\Q e_0$, perform one
zero-image extension for its chosen primitive representative;
\item for every countable $J\subseteq\cc$ containing $0$ and every
character-injective Cantor set $P\subseteq\T^J$, make the
$J$-projection of the evaluation image meet $P$.
\end{enumerate}
There are at most $\cc$ requirements.  For the last type, there
are $\cc$ possible countable coordinate sets by
\eqref{C:eq:cardinals}, and each countable product has at most $\cc$
closed sets.  Assign distinct serial numbers $\xi(T)<\cc$ to the
requirements $T$.  Let $J_T$ be the finite support of the target
vector for the first two types and the countable coordinate set
for the third type.  Declare $T$ ready at stage
\begin{equation}\label{C:eq:ready-stage}
 r(T)=\max\{\xi(T),\sup J_T\}<\cc.
\end{equation}
The inequality follows from $\operatorname{cf}(\cc)>\aleph_0$.

Start with common domain $D$ and the maps $v_i$ on their individual
domains $E_i$.  At an outer stage $\alpha<\cc$, proceed as follows.
First, if $\alpha>0$, import all elements of $E_\alpha$ into the
common domain.  This requires at most countably many applications
of Lemma~\ref{C:lem:ordinary-extension}: write an element as $d+x$
using~\eqref{C:eq:circle-decomposition}, and add $\Q x$ if necessary.
Second, import $w_\alpha$ by the same lemma if it is not already
present.  Third, process all requirements $T$ with $r(T)=\alpha$,
in any fixed well-order.

For an image requirement, do nothing if its value already belongs
to the target image; otherwise use
Lemma~\ref{C:lem:onto-extension}.  For a zero-image requirement, use
Lemma~\ref{C:lem:zero-extension} once.  For a Cantor-set requirement,
use Lemma~\ref{C:lem:cantor-extension}.
At limit stages, including limits inside an outer stage, take
unions of the common domains and of the compatible coordinate
maps.  The domain of the $i$th map remains $E+E_i$, because
unions commute with adjoining the fixed subgroup $E_i$.

When a requirement is processed at stage $\alpha$, all the groups
$E_i$ with $0<i\leq\alpha$ have entered the common domain.
Thus~\eqref{C:eq:target-ready} and the hypotheses of both geometric
extension lemmas hold.  A zero-image requirement processed at
this stage gives the prefix witness for all coordinates at most
$\alpha$, and in particular for those at most the largest support
coordinate of its target.

There are at most $\max(\aleph_0,|\alpha|)$ requirements ready at
stage $\alpha$, because their distinct serial numbers are at most
$\alpha$.  The same bound applies to the total number of elementary
domain extensions performed through this stage: we have imported
at most $|\alpha+1|$ countable groups and processed requirements
whose serial numbers are at most $\alpha$.  Consequently, at
every intermediate step of this stage,
\begin{equation}\label{C:eq:domain-size-bound}
 |E|\leq\max(\aleph_0,|\alpha|)<\cc.
\end{equation}
Each elementary extension adds only one $\Q$-direction.  Choosing
all its coordinate parameters does not enlarge the common domain
further.  Estimate~\eqref{C:eq:domain-size-bound} verifies the
hypothesis $|E|<\cc$ of every extension lemma, even when $\cc$
is singular.  All requirements are eventually processed.

\medskip\noindent\textbf{Step 3. Countability of every nonzero character kernel.}

The common domains exhaust $\T$ because every element of $W$ is
imported.  Hence the unions of the coordinate maps define
homomorphisms $f_i:\T\to\T$ with $f_i\res E_i=v_i$.
We verify the kernel estimate before drawing any topological
conclusions.

Fix $0\neq s\in\Z^{(\cc)}$.  A nonzero multiple of $e_0$ has
finite kernel contained in $D$, so suppose $s\notin\Z e_0$.
Initially its kernel on the common domain $D$ is countable.
An elementary extension can add kernel elements only in the
following circumstances:
\begin{enumerate}
\item the support of $s$ meets $B(E,x)$;
\item it is a surjectivity extension whose target is proportional
to $s$, and the new target value has nonzero finite order modulo
the old image;
\item it is the single zero-image extension belonging to the
rational line of $s$.
\end{enumerate}
The list is exhaustive by Lemmas~\ref{C:lem:ordinary-extension},
\ref{C:lem:onto-extension}, \ref{C:lem:zero-extension},
and~\ref{C:lem:cantor-extension}.

Steps of the first type are countable in number by
Lemma~\ref{C:lem:forced-count} and the finiteness of $\supp s$.
For the second type, all proportional vectors have the same old
image at every step by~\eqref{C:eq:proportional-images}.  Each such
step strictly increases its intersection with $D$ by
\eqref{C:eq:new-torsion-image}.  These intersections are increasing,
so a fixed rational line has only countably many steps of this
type.  There is only one step of the third type.

It follows that the kernel of this fixed vector can increase at
only countably many elementary steps. Let $\mathcal S_s$ be the
well-ordered set of these steps. For each $\eta\in\mathcal S_s$,
write $K_\eta^-$ and $K_\eta^+$ for the kernels immediately before
and after that step. By~\eqref{C:eq:kernel-quotient},
$K_\eta^+/K_\eta^-$ embeds in $\Q$, so choose a countable set
$R_\eta\subseteq K_\eta^+$ of representatives for its cosets.
Let $K_0$ be the initial kernel on $D$. Then
\[
 \ker f_s=\gen{K_0\cup\bigcup_{\eta\in\mathcal S_s}R_\eta}.
\]
Indeed, transfinite induction over the elementary extensions proves
that every intermediate kernel is contained in the group on the
right: at a growth step, subtract a representative to obtain an
element of the old kernel; at other successor steps the kernel is
unchanged. At a limit step the kernel is the union of the earlier
kernels, since each element already belongs to an earlier domain
and its character value never changes. The reverse inclusion holds
because all chosen elements retain value zero. The displayed
generating set is countable, so the final kernel is countable.

This count is stronger than an estimate at a single stage.
Merely adding countably many kernel elements at each of $\cc$
stages would not give the desired conclusion.  Here the stages
at which a \emph{fixed} kernel can increase are themselves
countable.

\medskip\noindent\textbf{Step 4. Verification of the coordinate requirements.}
The preceding construction preserves all the maps $v_i$, and
$f_0=\id$ throughout, giving property~\ref{C:req:extension}.  Every image requirement is
satisfied either when it is processed or earlier.  Thus each
$f_s$ with $s\notin\Z e_0$ is onto $\T$; nonzero multiples of
$f_0$ are onto as well.  The kernel count proves property~\ref{C:req:kernels}.
All the Cantor-set requirements were processed, giving property~\ref{C:req:cantor}.

Finally, the zero-image requirement for a primitive representative
$\tau$ is processed at some stage
$\alpha\geq\max\supp\tau$.  By
Lemma~\ref{C:lem:zero-extension}, its direction $x_\tau$ is nonzero,
$\Q x_\tau\subseteq\ker f_\tau$, and
\eqref{C:eq:prefix-witness} holds on the entire required prefix.
Later steps never change these values.  The other choice of sign
for a primitive representative uses the same witness.  This proves property~\ref{C:req:witness}.

Define the evaluation map and its image by
\begin{equation}\label{C:eq:evaluation-group}
 f:\T\longrightarrow\T^{\cc},\qquad
 f(x)=(f_i(x))_{i<\cc},\qquad G=f(\T).
\end{equation}
Equip $G$ with the subspace topology.

\medskip\noindent\textbf{Step 5. Density, continuous characters, and subgroups.}

The map $f$ is injective because its zeroth coordinate is the
identity.  Thus the underlying abstract group of $G$ is $\T$ and
$|G|=\cc$.  Every nonzero integer character of $\T^{\cc}$ restricts
to an onto map on $G$, by property~\ref{C:req:kernels}.
In particular no such character vanishes on $G$, so
\eqref{C:eq:annihilator} shows that $G$ is dense in $\T^{\cc}$.
It is therefore precompact with this compact completion.
Every continuous character of $G$ extends to the completion and
is the restriction of an integer character.  Consequently all
its nonzero continuous characters are onto and have countable
kernels.

Let $H\leq G$ be countable and let $u:H\to\T$ be an abstract
homomorphism. Under the algebraic identification $f:\T\to G$, the
pair $(f^{-1}(H),u\circ f|_{f^{-1}(H)})$ is one of the prescribed
requests $(C_i,u_i)$. Property~\ref{C:req:extension} shows that the
continuous coordinate character $p_{\{i\}}|_G$ extends $u$.
Hence every countable subgroup of $G$ is $h$-embedded.

Lemma~\ref{lem:closed} shows that every countable subgroup is closed.

If an uncountable subgroup $L\leq G$ were not dense, its closure
in $\T^{\cc}$ would be a proper closed subgroup.  By
\eqref{C:eq:annihilator}, a nonzero integer character would vanish
on $L$.  This is impossible because its kernel on $G$ is
countable.  Thus every uncountable subgroup of $G$ is dense.

Lemma~\ref{lem:quotient-obstruction} proves that all nontrivial Hausdorff quotients of $G$ are nonseparable.

\medskip\noindent\textbf{Step 6. Connectedness.}

Property~\ref{C:req:cantor} gives the first hypothesis
of Corollary~\ref{C:cor:global-criterion}.  For its second hypothesis,
fix a primitive $\tau$ with $m=\max\supp\tau>0$.
The witness $x_\tau$ in property~\ref{C:req:witness} and the annihilator calculation
in~\eqref{C:eq:prefix-density} give
\begin{equation}\label{C:eq:final-prefix-kernel}
 \cl_{\T^{[0,m]}}p_{[0,m]}(f(\Q x_\tau))
 =\ker\chi_\tau.
\end{equation}
For $t\in\T$, choose $e\in\T$ with $f_\tau(e)=t$.
The projection of the coset $f(e+\Q x_\tau)$ is then dense in
$\chi_\tau^{-1}(t)\subseteq\T^{[0,m]}$ and is contained in the
corresponding level of $p_{[0,m]}(G)$.
Also $p_{\{0\}}(G)=\T$.
Corollary~\ref{C:cor:global-criterion} proves that $G$ is connected.

\medskip\noindent\textbf{Step 7. The Baire property and Pontryagin reflexivity.}
Let $J\subseteq\cc$ be nonempty and countable and let
$R\subseteq\T^J$ be a dense $G_\delta$-set. Put $J'=J\cup\{0\}$.
The inverse image of $R$ under $\T^{J'}\to\T^J$ is dense
$G_\delta$. Lemma~\ref{C:lem:cantor-gdelta} supplies a
character-injective Cantor set in this inverse image.
Property~\ref{C:req:cantor} shows that $p_{J'}(G)$ meets
it, so $p_J(G)$ meets $R$. Lemma~\ref{B:lem:baire} therefore shows
that $G$ is Baire. Since its countable subgroups are $h$-embedded,
Lemma~\ref{B:lem:countable} shows that every compact subset of $G$ is
finite. Corollary~\ref{B:cor:reflexive} now gives Pontryagin reflexivity.
This completes the proof.
\end{proof}

Let $G$ and the coordinate maps $f_i$ be those
constructed in the proof of Theorem~\ref{C:thm:main}.
We record several consequences which clarify the relation
between connectedness, small kernels, and the absence of separable
quotients.

\begin{proposition}\label{C:prop:nonopen}
Every nonzero continuous character of $G$ is a nonopen
surjection.  If $\overline\chi:\T^{\cc}\to\T$ denotes its extension,
then
\begin{equation}\label{C:eq:strict-kernel-closure}
 \cl_{\T^{\cc}}(\ker\chi)\subsetneq\ker\overline\chi.
\end{equation}
\end{proposition}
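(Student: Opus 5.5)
Surjectivity is already recorded in Step 5 of the proof of Theorem~\ref{C:thm:main}: every nonzero continuous character $\chi$ of $G$ is the restriction of a nonzero integer character $\chi_s$ of $\T^{\cc}$, and property~\ref{C:req:kernels} says that $f_s$ is onto $\T$ with countable kernel. I will therefore write $\chi=\chi_s\res G$ and $\overline\chi=\chi_s$ throughout, and first prove the strict inclusion~\eqref{C:eq:strict-kernel-closure}; nonopenness will then follow from it.

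The inclusion $\cl(\ker\chi)\subseteq\ker\overline\chi$ holds because $\ker\overline\chi$ is closed and contains $\ker\chi$. For strictness, I note that $\ker\chi$ is a countable subgroup of $G$, hence closed in $G$ by Lemma~\ref{lem:closed} (every countable subgroup is $h$-embedded). Thus
\[
 \cl_{\T^{\cc}}(\ker\chi)\cap G=\ker\chi ,
\]
whereas $\ker\overline\chi\cap G=\ker\chi$ as well. This alone does not give strictness, so I will argue by weight instead. The closure of a countable subgroup of $\T^{\cc}$ is the annihilator of its annihilator by~\eqref{C:eq:annihilator}. The annihilator of $\ker\chi$ contains the annihilator of $\ker\overline\chi$, which is $\Z s$. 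I must show that it is strictly larger, i.e.\ that some integer character $\chi_{s'}$ with $s'\notin\Q s$ vanishes on $\ker\chi$. Here I use $h$-embeddedness directly: pick any $i$ outside $\supp s$ and $j\neq i$ with the countable subgroup $\ker\chi$. Better: choose $x\in\ker\chi$ nonzero if possible, or more simply apply $h$-embeddedness to the countable group $\ker\chi$ with the zero homomorphism, which extends to a continuous character; that is trivial. Instead I will use the dimension count. Because $\ker\chi$ is countable, it lies in the $\T^{\cc}$-closure of a countable subgroup, and its closure has the form~\eqref{C:eq:annihilator}, which is a compact group of weight at most $\aleph_0$ modulo a subproduct. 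Concretely, $\ker\chi\subseteq\T^{\cc}$ is countable, so its closure $Z$ is separable; on the other hand $\ker\overline\chi$ contains a copy of $\T^{\cc\setminus\supp s}$ (fix the finitely many coordinates in $\supp s$ to $0$), which is a compact group of weight $\cc$. If $Z=\ker\overline\chi$, then $Z$ would be a separable compact group of weight $\cc$, which is possible ($\T^{\cc}$ is separable), so this approach also fails. The decisive argument will be the annihilator one: I take $i\notin\supp s$ and the character $p_{\{i\}}$. Using that $\ker\chi$ is countable, the subgroup $\ker\chi\cap\ker f_i$ has countable index issues; instead I choose the prescribed request $(C,u)$ with $C=f^{-1}(\ker\chi)$ and $u=0$. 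Its index $i$ satisfies $f_i\res C=0$, so $\chi_{e_i}$ vanishes on $\ker\chi$. If $i\in\supp s$ I can choose a different index, since each pair $(C,u)$ is enumerated once, but I can enlarge $C$ by adjoining a countable set. Taking $C'\supseteq C$ with $u'$ zero on $C$ gives $\cc$ indices $i$ with $e_i$ annihilating $\ker\chi$, and one of them lies outside the finite set $\supp s$. Then $e_i\notin\Q s$, so by~\eqref{C:eq:annihilator} the closure of $\ker\chi$ lies in $\ker\chi_s\cap\ker\chi_{e_i}$, which is a proper subgroup of $\ker\chi_s$ because the corresponding fiber argument shows that $\chi_{e_i}$ is nonzero on $\ker\chi_s$.

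For nonopenness, suppose that $\chi$ were open. Then $G/\ker\chi\cong\T$ topologically, so $G$ would have a nontrivial separable Hausdorff quotient, contradicting the final assertion of Theorem~\ref{C:thm:main}. This gives a short proof, and it does not even require~\eqref{C:eq:strict-kernel-closure}. The main obstacle is the strictness step. The version above is the one I would use: find an index $i\notin\supp s$ whose prescribed map vanishes on $f^{-1}(\ker\chi)$, then invoke~\eqref{C:eq:annihilator}.
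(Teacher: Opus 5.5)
Your proof is correct. The nonopenness part is essentially the paper's argument. You invoke the quotient obstruction of Theorem~\ref{C:thm:main}: an open $\chi$ would give $G/\ker\chi\cong\T$. The paper unpacks the proof of Lemma~\ref{lem:quotient-obstruction} directly, noting that the countable proper closed subgroup $\chi^{-1}(D)$ would be dense if $\chi$ were open.

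For~\eqref{C:eq:strict-kernel-closure} your route genuinely differs.

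\emph{The paper's route.} It deduces strictness from nonopenness. If $\cl_{\T^{\cc}}(\ker\chi)=\ker\overline\chi$, every coset $g+\ker\chi$ is dense in the compact fiber $\overline\chi^{-1}(\chi(g))$. Hence $\chi(V\cap G)=\overline\chi(V)$ for every open $V\subseteq\T^{\cc}$, and $\chi$ would be open.

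\emph{Your route.} You exhibit an extra annihilating character directly from the construction. The requests $(C',u')$ with $C'\supseteq f^{-1}(\ker\chi)$ and $u'$ vanishing on $f^{-1}(\ker\chi)$ form $\cc$ distinct pairs. Distinct pairs necessarily carry distinct indices, so there are $\cc$ indices $i$ with $f_i$ zero on $f^{-1}(\ker\chi)$. One of them lies outside the finite set $\supp s$, and then $\cl_{\T^{\cc}}(\ker\chi)\subseteq\ker\chi_s\cap\ker\chi_{e_i}\subsetneq\ker\chi_s$.

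\emph{What each buys.} The paper's argument is shorter. It shows that strictness is a formal consequence of nonopenness plus compactness of the fibers, since equality would force openness. Yours returns to the enumeration in Step~1, but it yields more: $\cc$ coordinate characters, each independent of $s$, annihilate $\ker\chi$. So the closure of the kernel is very far from all of $\ker\overline\chi$.

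\emph{Write-up.} Delete the abandoned attempts, namely the weight count and the ``trivial'' extension of the zero map, since they contain vague or incorrect remarks. Replace ``the corresponding fiber argument'' by the explicit witness: the point of $\T^{\cc}$ whose $i$th coordinate is nonzero and whose other coordinates vanish lies in $\ker\chi_s\setminus\ker\chi_{e_i}$.
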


\begin{proof}
The subgroup $\chi^{-1}(D)$ is countable, because $D$ and
$\ker\chi$ are countable.  It is proper, because $\chi$ is onto,
and it is closed by the main theorem.  If $\chi$ were open, the
density of $D$ in $\T$ would make $\chi^{-1}(D)$ dense in $G$,
a contradiction.

For completeness, equality in~\eqref{C:eq:strict-kernel-closure}
would force $\chi$ to be open.  If $V\subseteq\T^{\cc}$ is open
and $t\in\overline\chi(V)$, choose $g\in G$ with $\chi(g)=t$.
The coset $g+\ker\chi$ would be dense in the compact fiber
$\overline\chi^{-1}(t)$, and so would meet $V$.  Hence
\[
 \chi(V\cap G)=\overline\chi(V).
\]
The right-hand side is open, since a continuous surjective
homomorphism between compact groups is open.  This contradicts
the first assertion.
\end{proof}

\begin{remark}
Formula~\eqref{C:eq:final-prefix-kernel} does not contradict
\eqref{C:eq:strict-kernel-closure}.  A countable kernel can have a
dense projection in the character kernel on an earlier coordinate
prefix, while later coordinates prevent density in the full
character kernel.  The construction only requires the former.
\end{remark}

\begin{proposition}\label{C:prop:finite-kernels}
The nonzero continuous characters of $G$ with finite kernel are
exactly the nonzero integer multiples of its zeroth coordinate.
If two continuous characters are independent over $\Q$, their
kernels are not commensurable.
\end{proposition}

\begin{proof}
For a nonzero integer $n$, the map $nf_0$ has kernel $\T[n]=\{t\in\T:nt=0\}$,
which is finite.  Any other integer vector $s$ is an integer
multiple of a primitive representative $\tau$ with nonzero
nonzeroth support.  Its kernel contains the infinite subgroup
$\Q x_\tau$ and is therefore countably infinite.

Two subgroups are commensurable if their intersection has finite
index in each.  Let $s,r$ be independent vectors.  Choose, say,
$s$ so that its largest support coordinate is at least the largest
support coordinate of $r$, and let $\tau$ be its primitive
representative.  Then $x_\tau\in\ker f_s$, while
$f_r(x_\tau)$ has infinite order by~\eqref{C:eq:prefix-witness}.
The first isomorphism theorem now rules out commensurability:
\[
 [\ker f_s:\ker f_s\cap\ker f_r]=\aleph_0.\qedhere
\]
\end{proof}

\begin{proposition}\label{C:prop:separable-subsets}
Every separable subspace of $G$ is countable.  In particular,
every nonempty separable connected subspace is a singleton, and $G$ has
no nonconstant paths.
\end{proposition}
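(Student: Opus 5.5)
The proof will show that a countable subset $S\subseteq G$ has countable closure in $G$. Separability of a subspace $Y\subseteq G$ then gives a countable dense $S\subseteq Y$, so $Y\subseteq\cl_G S$ is countable.

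\textbf{Closure of a countable set.} Let $S\subseteq G$ be countable and put $H=\gen{S}$. This is a countable subgroup of $G$. By Theorem~\ref{C:thm:main}, every countable subgroup of $G$ is closed, so $H$ is closed in $G$. Since $S\subseteq H$, we get $\cl_G S\subseteq H$, and $\cl_G S$ is countable. This is the whole argument; the only thing used is the closedness of countable subgroups, which was established from $h$-embeddedness via Lemma~\ref{lem:closed}.

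\textbf{Connected subspaces.} Let $Y$ be a nonempty separable connected subspace. By the first part $Y$ is countable. A countable connected Tychonoff space is a singleton. Indeed, if $a\neq b$ in $Y$, take a continuous $h:Y\to[0,1]$ with $h(a)=0$ and $h(b)=1$. Its image is countable, so some $r\in(0,1)$ is omitted. Then $h^{-1}([0,r))$ and $h^{-1}((r,1])$ give a separation of $Y$. Here $G$ is Tychonoff because it is a Hausdorff topological group.

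\textbf{Paths.} The image of a path $[0,1]\to G$ is compact, connected, and separable, being a continuous image of a separable space. It is therefore a singleton, so every path is constant. Alternatively, the image is finite by Theorem~\ref{C:thm:main}, since every compact subset of $G$ is finite, and a finite connected Hausdorff space is a point.

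No step is a real obstacle. The only point needing care is to pass to the generated subgroup $\gen{S}$ rather than work with $S$ itself, because the closedness result applies to subgroups.
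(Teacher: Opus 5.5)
Your proposal is correct and follows essentially the same route as the paper: you pass to the countable, hence closed, subgroup generated by a countable dense set, rule out countable connected Tychonoff spaces with two points, and observe that the image of a path is separable and connected. Your omitted-value separation and your alternative path argument via the finiteness of compact subsets are only cosmetic variations on the paper's argument.
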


\begin{proof}
If $Y\subseteq G$ is separable, choose a countable set $A\subseteq Y$
dense in $Y$.  The subgroup $C=\langle A\rangle$ is countable and
closed in $G$.  Therefore
\[
 Y\subseteq\cl_G A\subseteq C,
\]
so $Y$ is countable.  A connected Tychonoff space with two distinct
points has a continuous real-valued function taking distinct
values at those points; its image contains a nondegenerate interval.
Such a space is uncountable.  Every subspace of a Hausdorff
topological group is Tychonoff, so a nonempty separable connected subspace
of $G$ must be a singleton.  The image of a path is separable and
connected, proving the final assertion.
\end{proof}

\begin{remark}
Theorem~\ref{C:thm:main} answers all three parts of
Problem~\ref{prob:regularity} affirmatively and simultaneously:
the same precompact group is connected, Baire, and Pontryagin
reflexive, with the required properties of countable and uncountable
subgroups. The category and duality results used in the proof are
established in Section~\ref{sec:prelim} independently of either
construction.
\end{remark}

\section{A free Baire Pontryagin-reflexive example}\label{sec:free}
We retain the conventions and definitions of Section~\ref{sec:prelim}.
In particular, $A(X)$ denotes the abstract free abelian group on $X$.
We use the character formula~\eqref{C:eq:integer-character}, the
annihilator formula~\eqref{C:eq:annihilator}, extension to the compact
completion, and Lemma~\ref{C:lem:divisible} without repeating their proofs.

Let $P\leq\T$. A labelled family $(t_j)_{j\in J}$ is
\emph{independent modulo $P$} if
\begin{equation}\label{B:eq:independence}
 \sum_{j\in J}n_jt_j\in P
 \quad\Longrightarrow\quad n_j=0\text{ for all }j\in J
\end{equation}
for every finitely supported integer family $(n_j)_{j\in J}$.
Such a family freely generates a subgroup which intersects $P$ only
at zero. Independence without qualification means independence modulo
$\{0\}$.

\begin{lemma}[Independent selection in a dense $G_\delta$-set]\label{B:lem:selection}
Let $J$ be nonempty and countable, let $R\subseteq\T^J$ be a dense
$G_\delta$-set, and let $P\leq\T$ have cardinality less than $\cc$.
There exists $t\in R$ whose coordinates $(t(j))_{j\in J}$ are
independent modulo $P$.
\end{lemma}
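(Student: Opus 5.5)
The natural route is to reuse the character-injective Cantor set machinery of Section~\ref{sec:prelim}. Write $R=\bigcap_n V_n$ with each $V_n$ dense open in $\T^J$. By Lemma~\ref{C:lem:cantor-gdelta}, applied with $O=\T^J$, the set $R$ contains a character-injective Cantor set $Q\subseteq\T^J$. Then $|Q|=\cc$, and every nonzero integer character $\chi_s$, $s\in\Z^{(J)}$, is one-to-one on $Q$.

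Next we count the bad points. A point $t\in Q$ fails to be independent modulo $P$ exactly when $\chi_s(t)\in P$ for some nonzero $s\in\Z^{(J)}$. There are only countably many such $s$, because $J$ is countable and the supports are finite. For each fixed $s$, injectivity of $\chi_s$ on $Q$ gives
\[
 |Q\cap\chi_s^{-1}(P)|\leq|P|.
\]
So the set of bad points in $Q$ has cardinality at most $\max(\aleph_0,|P|)$.

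It remains to compare this with $\cc$. If $P$ is finite or countable, the bound is $\aleph_0<\cc$. If $P$ is infinite, the bound is $|P|<\cc$ by hypothesis. Either way the bad set is smaller than $|Q|=\cc$, so some $t\in Q\subseteq R$ survives, and its coordinates satisfy~\eqref{B:eq:independence}.

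The only delicate point is whether a union of countably many sets of size $|P|<\cc$ can exhaust $\cc$. It cannot: that union has cardinality $\aleph_0\cdot|P|=\max(\aleph_0,|P|)<\cc$, and no regularity of $\cc$ is needed. Beyond this, the argument rests entirely on Lemma~\ref{C:lem:cantor-gdelta}, which already carries out the fusion construction producing $Q$; I expect no further obstacle.
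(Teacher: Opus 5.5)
Your proof is correct, but it takes a different route from the paper's.

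\textbf{The paper's route.} The paper does not call on Lemma~\ref{C:lem:cantor-gdelta}. It runs a Mycielski-type fusion directly inside $R$ and produces points $z_\eta\in R$, $\eta\in 2^{\N}$, such that the whole labelled family $(z_\eta(j))_{(\eta,j)\in 2^{\N}\times J}$ is independent over $\Z$. To get this it must avoid, for every arity $m$, the kernels $F_a\subseteq(\T^J)^m$ of all nonzero integer relations among $m$ distinct points. It then uses uniqueness of integer expressions in this independent family. Each $p\in P\cap\gen{E}$ involves only finitely many branches, and those branches are discarded. Any surviving $z_\eta$ then works.

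\textbf{Your route.} You need only the two-point condition built into character-injectivity: each single nonzero $\chi_s$ is one-to-one on $Q$. You replace the branch-discarding step by the fibre count $|Q\cap\chi_s^{-1}(P)|\leq|P|$ over countably many $s$. This is exactly the counting the paper itself uses in the proof of Lemma~\ref{C:lem:cantor-extension}.

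\textbf{Comparison.} Your argument is shorter and lets both constructions run on a single tool from Section~\ref{sec:prelim}. The paper's argument produces a stronger intermediate object, a perfect subset of $R$ whose points have jointly independent coordinates, but the lemma as stated does not need this. Both arguments exclude at most $\max(\aleph_0,|P|)<\cc$ points. Neither uses regularity of $\cc$ or any covering property of meagre sets.
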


\begin{proof}
Put $Z=\T^J$ and write $R=\bigcap_{n\geq1}U_n$, where each $U_n$
is dense and open. We first construct a family
$(z_\eta)_{\eta\in2^{\N}}\subseteq R$ such that the entire labelled
family
\begin{equation}\label{B:eq:cantor-independent}
 \bigl(z_\eta(j)\bigr)_{(\eta,j)\in2^{\N}\times J}
\end{equation}
is independent.

For $m\geq1$ and a nonzero finitely supported integer array
$a=(a_{i,j})_{1\leq i\leq m,\,j\in J}$, let
\[
 F_a=\left\{(z_1,\ldots,z_m)\in Z^m:
       \sum_{i=1}^m\sum_{j\in J}a_{i,j}z_i(j)=0\right\}.
\]
This is the kernel of a nonzero continuous character of $Z^m$.
It is closed and nowhere dense: a subgroup with nonempty interior is
open, and the connected group $Z^m$ has no proper open subgroup.
There are only countably many relations $F_a$; list them as
$(F_r)_{r\geq1}$, retaining the arity $m_r$ of each relation.

Fix a compatible complete metric on the perfect compact space $Z$.
We construct nonempty open sets $V_s$, indexed by finite binary
strings, with $V_{\varnothing}=Z$. At level $n\geq1$ their closures
are pairwise disjoint, have diameter at most $2^{-n}$, and satisfy
\[
 \overline{V_s}\subseteq
 V_{s|_{n-1}}\cap\bigcap_{r=1}^nU_r
 \qquad(|s|=n).
\]
We also require that, for $r\leq n$ and distinct strings
$s_1,\ldots,s_{m_r}$ of length $n$,
\begin{equation}\label{B:eq:fusion}
 \overline{V_{s_1}}\times\cdots\times\overline{V_{s_{m_r}}}
 \quad\text{is disjoint from }F_r.
\end{equation}

Here is the induction step. Choose all centres for the next level
simultaneously in the finite product of their prescribed parent open
sets, intersected with the first $n$ dense open sets. The forbidden
relations in this product are pullbacks of finitely many $F_r$ under
open coordinate projections, so they are nowhere dense. The conditions
that two centres coincide also form nowhere dense closed sets, since
$Z$ has no isolated points. We can therefore choose distinct centres
avoiding all forbidden relations. Shrinking their neighbourhoods gives
the required closure inclusions, diameter bounds and
\eqref{B:eq:fusion}. There are only finitely many conditions at each
level.

For $\eta\in2^{\N}$, let $z_\eta$ be the unique point of
$\bigcap_n\overline{V_{\eta|_n}}$. It belongs to $R$.
Any proposed nonzero integer relation among finitely many entries
of~\eqref{B:eq:cantor-independent} can be grouped according to the
distinct branches that occur. It is then one of the relations $F_r$.
At a sufficiently deep level the branches have separated and $r\leq n$,
contradicting~\eqref{B:eq:fusion}. This proves the claimed independence.

Let $E$ be the set of all entries in~\eqref{B:eq:cantor-independent}.
Every element of $\gen{E}$ has a unique finite integer expression
in these labelled entries. For each $p\in P\cap\gen{E}$, discard
all branches appearing in its expression. Fewer than $\cc$ branches
are discarded, because each expression is finite and $|P|<\cc$.
Choose a remaining branch $\eta$. A nonzero integer combination of
the coordinates of $z_\eta$ cannot belong to $P$: its unique expression
would force the branch $\eta$ to have been discarded. Hence
$t=z_\eta$ has the desired property.
\end{proof}

The first part of this proof is a specific instance of Mycielski's
independent-set method~\cite{Mycielski}. The second part permits a
subgroup $P$ of any cardinality less than $\cc$. In particular, the
proof does not use an assertion that fewer than $\cc$ meagre sets
cannot cover a Polish space.

For later individual choices, recall the saturation notation from
Section~\ref{sec:prelim}. For an arbitrary subgroup $P\leq\T$,
observe that
\begin{equation}\label{B:eq:saturation}
 |\Sat(P)|\leq\max(\aleph_0,|P|).
\end{equation}
Indeed, the equation $nt=p$ has exactly $n$ solutions in $\T$. Thus,
when $|P|<\cc$, an element of $\T\setminus\Sat(P)$ can always be
chosen; its singleton family is independent modulo $P$.

\begin{lemma}[Countable kernels of finite addition maps]\label{B:lem:sum}
Let $(H_i)_{i\in I}$ be subgroups of $\T$ with $H_i\subseteq B_i+L_i$,
where each $B_i$ is countable and the subgroups $L_i\leq\T$ form an
algebraic direct sum. For every finite nonempty $F\subseteq I$, the
addition map
\[
 s_F:\prod_{i\in F}H_i\longrightarrow\T,
 \qquad s_F((t_i))=\sum_{i\in F}t_i
\]
has countable kernel.
\end{lemma}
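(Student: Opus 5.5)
Write each $t_i\in H_i$ as $t_i=b_i+l_i$ with $b_i\in B_i$ and $l_i\in L_i$. The plan is to show that a kernel element of $s_F$ is determined, up to countably many choices, by the $B$-parts. This splits into two steps.

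\emph{Step 1: the $B$-parts leave countably many options.} Fix a finite nonempty $F\subseteq I$ and suppose $\sum_{i\in F}t_i=0$. Then
\[
 \sum_{i\in F}l_i=-\sum_{i\in F}b_i\in B_F:=\sum_{i\in F}B_i ,
\]
and $B_F$ is countable. Put $L_F=\bigoplus_{i\in F}L_i$. Because the $L_i$ form an algebraic direct sum, every element of $L_F$ has unique components $l_i$. Hence, for each $c\in B_F\cap L_F$, the family $(l_i)_{i\in F}$ is uniquely determined by the condition $\sum_i l_i=c$.

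\emph{Step 2: define an injection into a countable set.} Consider the map
\[
 \ker s_F\ni(t_i)_{i\in F}\longmapsto\bigl((b_i)_{i\in F},\,\textstyle\sum_i l_i\bigr).
\]
Its codomain is the countable set $\prod_{i\in F}B_i\times B_F$. The tuple $(t_i)$ is recovered from $(b_i)$ and the unique $l_i$ attached to $c=\sum_i l_i$. This recovery does not depend on the chosen decomposition $t_i=b_i+l_i$, because we fix one decomposition per element in advance, which is a choice function.

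A cleaner way to organize the same count: for each tuple $(b_i)\in\prod_{i\in F}B_i$, the kernel elements with these fixed $b_i$ correspond injectively to the single value $c=-\sum_i b_i$. So each fibre contains at most one element, and $|\ker s_F|\leq\prod_{i\in F}|B_i|\leq\aleph_0$.

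\emph{Main obstacle.} The only subtle point is non-uniqueness of the decomposition $t_i=b_i+l_i$, since $B_i$ and $L_i$ may intersect. I would handle it by fixing one decomposition for each element of $H_i$ beforehand. The injectivity argument then uses only the uniqueness of components in the direct sum $\bigoplus L_i$.
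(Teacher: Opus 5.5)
Your proof is correct and is essentially the paper's argument: there are countably many tuples $(b_i)_{i\in F}$, and for each of them injectivity of addition on $\prod_{i\in F}L_i$ leaves at most one kernel tuple. The only difference is cosmetic. You fix one decomposition per element by a choice function. The paper instead observes that the union, over all $(b_i)$, of these at-most-singleton sets already contains every kernel tuple, so no decomposition needs to be fixed.
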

\begin{proof}
Write each coordinate of a kernel tuple as $t_i=b_i+l_i$. There are
countably many choices for $(b_i)$. For each such choice,
$\sum_i l_i=-\sum_i b_i$ has at most one solution tuple, because
addition on $\prod_{i\in F}L_i$ is injective. This counts all kernel
tuples even if the individual decompositions are not unique.
\end{proof}

\begin{lemma}[Countable kernels of integer combinations]\label{B:lem:all-kernels}
Let $A$ be a torsion-free abelian group, and let
$(g_i:A\to\T)_{i\in I}$ be homomorphisms with countable kernels.
Suppose that $g_i(A)\subseteq B_i+L_i$, where each $B_i$ is countable
and the subgroups $L_i\leq\T$ form an algebraic direct sum.
Then every nonzero finitely supported integer family $(n_i)_{i\in I}$
satisfies
\[
 \left|\ker\left(\sum_{i\in I}n_i g_i\right)\right|\leq\aleph_0.
\]
\end{lemma}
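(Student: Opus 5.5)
Let $F=\supp(n_i)$, finite and nonempty, and put $g=\sum_{i\in F}n_ig_i$. I would factor $g$ through the addition map of Lemma~\ref{B:lem:sum}. Consider the homomorphism
\[
 \Delta:A\longrightarrow\prod_{i\in F}H_i,\qquad \Delta(a)=(n_ig_i(a))_{i\in F},
\]
where $H_i=n_ig_i(A)$. Then $g=s_F\circ\Delta$, and hence
\[
 \ker g=\Delta^{-1}(\ker s_F).
\]

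The first step is to check that the subgroups $H_i$ satisfy the hypothesis of Lemma~\ref{B:lem:sum}. Since $g_i(A)\subseteq B_i+L_i$, we get $H_i\subseteq n_iB_i+n_iL_i$. The group $n_iB_i$ is countable. The family $(n_iL_i)_{i\in F}$ remains an algebraic direct sum, because $n_iL_i\leq L_i$ and subgroups of independent summands are independent. For indices outside $F$ the choice is irrelevant, so we may restrict $I$ to $F$. Lemma~\ref{B:lem:sum} then shows that $\ker s_F$ is countable.

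The second step is to show that every fiber of $\Delta$ is countable, i.e.\ that $\ker\Delta$ is countable. We have
\[
 \ker\Delta=\bigcap_{i\in F}\ker(n_ig_i).
\]
Fix some $i\in F$, so that $n_i\neq0$. Then $\ker(n_ig_i)=g_i^{-1}(\T[n_i])$, and $\T[n_i]$ is finite. Each fiber of $g_i$ is a coset of the countable group $\ker g_i$, so $g_i^{-1}(\T[n_i])$ is a finite union of countable cosets and is countable. Therefore $\ker\Delta$ is countable.

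Combining the two steps, $\ker g=\Delta^{-1}(\ker s_F)$ is a union of countably many cosets of the countable group $\ker\Delta$. Hence $|\ker g|\leq\aleph_0$.

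The only point needing care is the first step: the hypothesis of Lemma~\ref{B:lem:sum} must be preserved when each $g_i$ is multiplied by $n_i$. Torsion-freeness of $A$ does not appear to be needed in this argument.
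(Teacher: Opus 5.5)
Your proof is correct and is essentially the paper's argument. Both proofs factor $\sum_i n_ig_i$ through the addition map $s_F$ of Lemma~\ref{B:lem:sum} applied to the tuple $(n_ig_i(a))_{i\in F}$, and then use countable fibres in a single coordinate.

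The only difference is where the multiplier $n_i$ is absorbed. The paper writes $n_ig_i(a)=g_i(n_ia)\in g_i(A)$, so Lemma~\ref{B:lem:sum} applies verbatim with $H_i=g_i(A)$. Your ``point needing care'' is therefore automatic, since $n_ig_i(A)\subseteq g_i(A)$. The paper then recovers $a$ from $n_{i_0}a$ using torsion-freeness. You instead use the finiteness of $\T[n_i]$ to see that $\ker(n_ig_i)=g_i^{-1}(\T[n_i])$ is countable. This correctly shows that the torsion-free hypothesis is redundant: indeed $A[n]\subseteq g_i^{-1}(\T[n])$ is automatically countable.
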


\begin{proof}
Put $Q=\{i\in I:n_i\neq0\}$ and $H_i=g_i(A)$.
For $a$ in the displayed kernel, the tuple
$(g_i(n_i a))_{i\in Q}$ belongs to the kernel of the addition map
$s_Q$ from Lemma~\ref{B:lem:sum}. That kernel is countable.
Fix $i_0\in Q$. There are therefore only countably many possible
values of $g_{i_0}(n_{i_0}a)$. Every fibre of $g_{i_0}$ is countable,
so there are only countably many possibilities for $n_{i_0}a$.
Multiplication by $n_{i_0}\neq0$ is injective on the torsion-free
group $A$, and the conclusion follows.
\end{proof}

\begin{theorem}\label{B:thm:main}
There exists in ZFC a dense subgroup $G\leq\T^{\cc}$ whose underlying
group is free abelian of rank $\cc$, which is zero-dimensional, Baire,
and Pontryagin reflexive, whose countable subgroups are $h$-embedded and closed, and
whose uncountable subgroups are dense. Every compact subset of $G$ is
finite, and every nontrivial Hausdorff quotient of $G$ is nonseparable.
\end{theorem}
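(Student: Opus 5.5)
We will realize $G$ as the image of an evaluation map $f:A(\cc)\to\T^{\cc}$, $f(a)=(g_\gamma(a))_{\gamma<\cc}$, where $A(\cc)$ is free abelian with basis $\{x_\alpha:\alpha<\cc\}$ and the characters $g_\gamma$ are built by a transfinite recursion of length $\cc$ on the basis elements. We enumerate, with indices $\gamma<\cc$, all pairs $(C_\gamma,u_\gamma)$ with $C_\gamma\le A(\cc)$ countable and $u_\gamma:C_\gamma\to\T$ a homomorphism. This is possible since there are $\cc^{\aleph_0}=\cc$ such pairs. We also enumerate all pairs $(J,R)$ with $J\subseteq\cc$ countable nonempty and $R\subseteq\T^J$ a dense $G_\delta$-set; again there are $\cc$ of these.

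Since $\operatorname{cf}(\cc)>\aleph_0$, each such request only involves an initial segment of basis elements and coordinates. We schedule each request at a stage $\alpha<\cc$ at which its data lie below $\alpha$, in the manner of \eqref{C:eq:ready-stage}. At stage $\alpha$ we define $g_\gamma(x_\alpha)$ for all $\gamma<\cc$ as follows.
\begin{enumerate}
\item For $\gamma$ with $\gamma\le\alpha$ whose prescribed pair has $x_\alpha$ in the relevant span, the value $g_\gamma(x_\alpha)$ is forced by an extension of $u_\gamma$. We fix these extensions in advance via Lemma~\ref{C:lem:divisible}, applied to $\gen{C_\gamma\cup\{x_\beta:\beta\in S_\gamma\}}$ with $S_\gamma$ a countable set of indices, so that $g_\gamma(x_\alpha)$ is forced only for $\alpha\in S_\gamma$, which is countable.
\item If a Baire request $(J,R)$ is scheduled at stage $\alpha$, we use Lemma~\ref{B:lem:selection} to choose $(g_j(x_\alpha))_{j\in J}\in R$, independent modulo the subgroup $P$ generated by all earlier values. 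Here $|P|<\cc$, and we enlarge $P$ by any already forced values so that independence is not spoiled.
\item For all remaining free coordinates we choose values one by one outside $\Sat(P)$, using \eqref{B:eq:saturation}.
\end{enumerate}

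The purpose of these choices is the hypothesis of Lemma~\ref{B:lem:all-kernels}. For each $\gamma$, the image $g_\gamma(A)$ should lie in $B_\gamma+L_\gamma$. Here $B_\gamma$ is the countable subgroup generated by the forced values and the $u_\gamma$-values. The group $L_\gamma$ is generated by the free values chosen for coordinate $\gamma$, and these free values across all coordinates and stages form a single independent family. Hence the $L_\gamma$ form a direct sum. We must also ensure that each $g_\gamma$ has countable kernel: countably many forced generators contribute at most countable kernel, and the free generators are independent, so the kernel is countable. The coordinate $g_0$ is free everywhere, hence injective, so $f$ is injective and $G\cong A(\cc)$. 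Lemma~\ref{B:lem:all-kernels} then gives that every nonzero $\chi_s\circ f$ has countable kernel. By \eqref{C:eq:annihilator}, $G$ is dense, and every uncountable subgroup of $G$ is dense, since a nonzero integer character vanishing on it would have uncountable kernel.

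Property (1) gives that each countable subgroup is $h$-embedded through the coordinate $p_{\{\gamma\}}$. Lemma~\ref{lem:closed}, Lemma~\ref{B:lem:countable} and Lemma~\ref{lem:quotient-obstruction} then give the closedness of countable subgroups, the finiteness of compact sets, and the nonseparability of nontrivial Hausdorff quotients. The Baire requests give \eqref{B:eq:category}, so $G$ is Baire by Lemma~\ref{B:lem:baire}, and it is reflexive by Corollary~\ref{B:cor:reflexive}.

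For zero-dimensionality we plan to use that $\ker g_0\circ f^{-1}=\{0\}$ and that $g_0$ is injective with image a free (torsion-free) subgroup. Pick a basic neighbourhood of zero. Its trace on $G$ is determined by finitely many characters, and we want it to contain a clopen neighbourhood. Since $G$ has countable closed subgroups and is precompact, we will argue instead that $G$ has a base of clopen sets by showing that for each character $\chi$ on $G$, the set $\chi^{-1}(\T\setminus\{t\})$ can be refined by clopen sets, using that $\chi(G)$ avoids $t$ for suitable $t$ with $\chi(G)\ne\T$. We therefore also require in the recursion that each $g_\gamma$, and more generally each $\chi_s\circ f$, is not onto: all values lie in the countable $B_\gamma$ plus the free group generated by independent elements, which is a proper subgroup. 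For zero-dimensionality it is cleaner to require that every image $(\chi_s\circ f)(G)$ misses a dense set of points. Then preimages of arcs with endpoints outside the image are clopen, and finite intersections of these form a base, since the characters determine the topology.

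The main obstacle is this bookkeeping of forced versus free values. We must organize the extensions of the $u_\gamma$ so that each coordinate is forced on only countably many basis elements, or at least has forced image inside a countable $B_\gamma$, while the Baire selections, which prescribe entire blocks of coordinates at once, remain part of the independent family defining the $L_\gamma$.
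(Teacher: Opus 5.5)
You have the same architecture as the paper: a $\cc\times\cc$ matrix of values on $A(X)$, prescribed entries coming from fixed extensions $h_\gamma$ on countable supports, and all other entries forming one independent family. The downstream steps also match: Lemma~\ref{B:lem:selection} for the Baire requests, then Lemmas~\ref{B:lem:all-kernels}, \ref{B:lem:baire}, \ref{lem:closed}, \ref{B:lem:countable}, \ref{lem:quotient-obstruction} and Corollary~\ref{B:cor:reflexive}. However, the recursion as you set it up does not work.

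\textbf{The main gap: $|P|<\cc$ fails.} You fill an entire column at each stage, defining $g_\gamma(x_\alpha)$ for all $\gamma<\cc$ at stage $\alpha$. Already at stage $1$, the group $P$ generated by ``all earlier values'' contains the $\cc$ independent free values $g_\gamma(x_0)$. So $|P|=\cc$, and your assertion ``Here $|P|<\cc$'' is false.

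If $P$ is also to contain ``any already forced values'' from all rows, then $P=\T$ outright. This is because $h_\gamma(x_\alpha)=u_\gamma(x_\alpha)$ runs over all of $\T$ as $(C_\gamma,u_\gamma)$ runs over the pairs $(\gen{x_\alpha},u)$.

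This is not cosmetic. Lemma~\ref{B:lem:selection} (its second half discards fewer than $\cc$ branches) and the count \eqref{B:eq:saturation} both need $|P|<\cc$, and for large $P$ the conclusion can fail. If $\T/\Sat(P)$ has torsion-free rank at most one, then no two elements of $\T$ are independent modulo $P$. In that case no admissible $t\in R$ exists once $|J|\ge2$.

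The underlying problem is that your order of choices has type $\cc\cdot\cc$ rather than $\cc$. The paper's bookkeeping with introduced rows $I$ and introduced columns $S$ exists exactly to avoid this:
\begin{enumerate}
\item at stage $\xi$ only at most $\kappa_\xi=\max(\aleph_0,|\xi|)$ rows and columns are active;
\item a free entry $(\alpha,x)$ is chosen only once both row $\alpha$ and column $x$ have been introduced;
\item $P$ is generated only by the current free entries and the groups $B_\alpha$ with $\alpha\in I$.
\end{enumerate}
Hence $|P|\le\kappa_\xi<\cc$ at every choice.

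\textbf{A smaller gap: the Baire column.} You attach the Baire request $(J,R)$ to the column $x_\alpha$ of its scheduled stage. Nothing prevents $x_\alpha$ from lying in the prescribed support $S_j$ of some row $j\in J$. In that case $g_j(x_\alpha)=h_j(x_\alpha)$ is forced and cannot be chosen to put the tuple into $R$. Adding forced values to $P$ does not help, and having the request's data lie ``below $\alpha$'' is irrelevant here. The paper first introduces the rows of $J_\xi$ and then picks a fresh column $w_\xi\notin S\cup\bigcup_{\alpha\in I}Y_\alpha$, so that every entry of rows $J_\xi$ at $w_\xi$ is free.

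\textbf{Minor points.} Your $g_0$ is itself attached to the request $(C_0,u_0)$, so it is not ``free everywhere'' unless you reserve that coordinate. Injectivity of $f$ is better obtained, as in the paper, from $h$-embedding applied to $\gen{a}$.

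Once the recursion is repaired, your remaining steps are sound. Row kernels are countable because the free entries in a row are independent. Zero-dimensionality follows because $H_\gamma=B_\gamma+L_\gamma$ is a proper subgroup, with $L_\gamma$ free of rank $\cc$ and $B_\gamma$ countable.
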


\begin{proof}
We construct coordinate homomorphisms on a free abelian group.
Two families of requirements are imposed simultaneously: extension
of every homomorphism on a countable subgroup, and meeting every dense
$G_\delta$-set in every countable coordinate projection. Independent
free entries make all nonzero integer combinations of coordinates
have countable kernels. These three ingredients yield, respectively,
$h$-embeddedness, the Baire property, and density of uncountable
subgroups. The coordinate images are proper subgroups of the circle,
which gives zero-dimensionality. The common preliminaries then give
reflexivity and the quotient obstruction.

\medskip\noindent\textbf{Step 1. Prescribed homomorphisms and category requirements.}

Let $X=\{x_\xi:\xi<\cc\}$ and let $F=A(X)$. We identify cardinals
with their initial ordinals. Since $|F|=\cc$ and
$\cc^{\aleph_0}=\cc$, the pairs consisting of a countable subgroup
$C\leq F$ and a homomorphism $u:C\to\T$ can be enumerated as
\begin{equation}\label{B:eq:hom-list}
 \{(C_\alpha,u_\alpha):\alpha<\cc\}.
\end{equation}
For each $\alpha$, choose a countable $Y_\alpha\subseteq X$ with
$C_\alpha\subseteq A(Y_\alpha)$. Fix an extension
\[
 h_\alpha:A(Y_\alpha)\longrightarrow\T,
 \qquad h_\alpha|_{C_\alpha}=u_\alpha,
\]
and put $B_\alpha=h_\alpha(A(Y_\alpha))$. Each $B_\alpha$ is
countable. All these data are fixed before the recursion begins.

There are also exactly $\cc$ pairs $(J,R)$ for which
$\varnothing\neq J\subseteq\cc$ is countable and $R$ is a dense
$G_\delta$ subset of $\T^J$. Indeed, there are $\cc$ countable
subsets of $\cc$, and a second countable space has at most $\cc$
$G_\delta$-sets. Enumerate these pairs as
\begin{equation}\label{B:eq:category-list}
 \{(J_\xi,R_\xi):\xi<\cc\}.
\end{equation}

We shall define a matrix $(a_{\alpha,x})_{\alpha<\cc,\,x\in X}$
of elements of $\T$. The entries in row $\alpha$ determine a
homomorphism $f_\alpha:F\to\T$. Entries with $x\in Y_\alpha$
are prescribed by $h_\alpha$; the remaining entries will be chosen
independently.

\medskip\noindent\textbf{Step 2. Partial matrices and extension operations.}

At any point of the recursion, let $I\subseteq\cc$ be the set of
introduced rows and let $S\subseteq X$ be the set of introduced
basis elements. The domain of the partial matrix is
\begin{equation}\label{B:eq:domain}
 \{(\alpha,x):\alpha\in I,\ x\in S\cup Y_\alpha\}.
\end{equation}
We maintain the following conditions:
\begin{enumerate}
\item[(i)] $a_{\alpha,x}=h_\alpha(x)$ for $\alpha\in I$ and
$x\in Y_\alpha$;
\item[(ii)] the labelled family of free entries
\[
 \mathcal E=\bigl(a_{\alpha,x}\bigr)_
 {\alpha\in I,\,x\in S\setminus Y_\alpha}
\]
is independent;
\item[(iii)] for each $\alpha\in I$, the subgroup generated by its
free entries intersects $B_\alpha$ only at zero.
\end{enumerate}
In choosing a new free entry, we normally let $P$ be the subgroup of
$\T$ generated by all existing free entries together with
$\bigcup_{\alpha\in I}B_\alpha$. Provided $|P|<\cc$, choosing
outside $\Sat(P)$ preserves (ii) and (iii). The group $P$ is updated
after each choice.

To introduce a new row $\alpha$, first prescribe all its entries on
$Y_\alpha$ using $h_\alpha$. This row had no entries before, so no
previous value changes. Include $B_\alpha$ among the generators of
$P$. For each $x\in S\setminus Y_\alpha$, in any well-order,
choose $a_{\alpha,x}$ outside the current $\Sat(P)$. This produces
a partial matrix with introduced row set $I\cup\{\alpha\}$.

To introduce a new basis element $x$, retain the entries in rows for
which $x\in Y_\alpha$, since these are already prescribed. In each
remaining introduced row choose a new free entry by the same procedure.
The resulting introduced basis set is $S\cup\{x\}$.

There is no requirement that all existing free entries be independent
modulo the union of all the groups $B_\alpha$. A newly introduced
$B_\alpha$ may contain old free entries from other rows. This causes
no conflict: it does not alter a relation among the old free entries,
and row $\alpha$ acquires its first free entry only after $B_\alpha$
has been included in the choice of $P$. Thereafter $B_\alpha$ is fixed.
This observation proves that the extension operations preserve exactly
the three stated conditions.

\medskip\noindent\textbf{Step 3. The recursion and its cardinal bounds.}

Start with $I=S=\varnothing$. At stage $\xi<\cc$, perform the
following operations in order.

First introduce all rows in $\{\xi\}\cup J_\xi$ which have not
yet appeared. Next introduce $x_\xi$ if necessary. Choose a basis
element
\begin{equation}\label{B:eq:witness}
 w_\xi\in X\setminus
 \left(S\cup\bigcup_{\alpha\in I}Y_\alpha\right).
\end{equation}
Thus no value in an introduced row is prescribed in advance for
$w_\xi$. Let $P$ be generated by the current free entries and the
groups $B_\alpha$, $\alpha\in I$. Apply
Lemma~\ref{B:lem:selection} to choose $t_\xi\in R_\xi$ with
coordinates independent modulo $P$, and prescribe simultaneously
\begin{equation}\label{B:eq:prescribed-category}
 a_{\alpha,w_\xi}=t_\xi(\alpha)
 \qquad(\alpha\in J_\xi).
\end{equation}
For the rows in $I\setminus J_\xi$, fill the remaining entries
of $w_\xi$ one at a time outside the current $\Sat(P)$, including
the entries just chosen in~\eqref{B:eq:prescribed-category} among the
generators of $P$. Finally add $w_\xi$ to $S$.

Independence modulo $P$ shows that the simultaneous choice preserves
(ii). It also preserves (iii), since $P$ contains the old free
entries and all introduced $B_\alpha$. The subsequent individual
choices preserve the same conditions. At a limit stage take the union
of the preceding partial matrices. Every possible failure of (ii) or
(iii) would involve a finite relation and hence would already occur
at an earlier stage.

For completeness, all choices above have the required cardinal bounds.
At stage $\xi$, put $\kappa_\xi=\max(\aleph_0,|\xi|)<\cc$.
Only countably many rows and at most two basis elements are introduced
at each stage. Therefore, throughout stage $\xi$, both $I$ and $S$
have cardinality at most $\kappa_\xi$. The domain in
\eqref{B:eq:domain} has cardinality at most
$\kappa_\xi^2=\kappa_\xi$, and so do the free entries and
$\bigcup_{\alpha\in I}B_\alpha$. Consequently $|P|<\cc$ at every
individual or simultaneous choice. The excluded set in
\eqref{B:eq:witness} also has cardinality at most $\kappa_\xi$, so
$w_\xi$ exists. These estimates use a single bound for each stage
and do not assume that $\cc$ is regular. Rows in $J_\xi$ are
introduced when needed; no condition such as $J_\xi\subseteq\xi$
is required.

Every row $\alpha$ and every basis element $x_\alpha$ has appeared
by the end of stage $\alpha$. Hence the recursion defines the full
matrix. Extend its rows to homomorphisms and set
\[
 f_\alpha:F\longrightarrow\T,\quad f_\alpha(x)=a_{\alpha,x},
 \qquad
 f=(f_\alpha)_{\alpha<\cc}:F\longrightarrow K=\T^{\cc}.
\]
For each $\alpha<\cc$, put
\begin{equation}\label{B:eq:E}
 E_\alpha=\{f_\alpha(x):x\in X\setminus Y_\alpha\}.
\end{equation}
Condition (ii) implies that the sets $E_\alpha$ are pairwise disjoint
and that their union $E$ is independent. Condition (iii) gives
\begin{equation}\label{B:eq:E-B}
 \gen{E_\alpha}\cap B_\alpha=\{0\}.
\end{equation}
The other properties of the resulting homomorphisms are
\begin{align}
 f_\alpha|_{C_\alpha}&=u_\alpha,
       \label{B:eq:extension}\\
 \ker f_\alpha&\subseteq A(Y_\alpha),
       \label{B:eq:small-kernel}\\
 f_\alpha(F)&=B_\alpha+\gen{E_\alpha},
       \label{B:eq:image}\\
 p_{J_\xi}(f(w_\xi))&=t_\xi\in R_\xi.
       \label{B:eq:category-met}
\end{align}
To verify~\eqref{B:eq:small-kernel}, write $a\in F$ as $a=y+z$,
where $y\in A(Y_\alpha)$ and
$z\in A(X\setminus Y_\alpha)$. If $f_\alpha(a)=0$, then
$f_\alpha(z)\in\gen{E_\alpha}\cap B_\alpha=\{0\}$.
Independence of the free entries in row $\alpha$ gives $z=0$.
In particular, every $\ker f_\alpha$ is countable. Finally,
\eqref{B:eq:category-met} persists because the recursion never changes
an assigned entry.

\medskip\noindent\textbf{Step 4. Injectivity and countable subgroups.}

Let $G=f(F)$ with the topology inherited from $K=\T^{\cc}$.

If $0\neq a\in F$, choose a homomorphism
$u:\gen{a}\to\T$ with $u(a)\neq0$. The pair $(\gen{a},u)$
occurs in~\eqref{B:eq:hom-list}. For the corresponding $\alpha$,
\eqref{B:eq:extension} gives $f_\alpha(a)\neq0$. Thus $f$ is
injective and $G$ is free abelian of rank $\cc$.

Let $D\leq G$ be countable and let $v:D\to\T$ be an abstract
homomorphism. The pair
\[
 \bigl(f^{-1}(D),\ v\circ f|_{f^{-1}(D)}\bigr)
\]
also appears in~\eqref{B:eq:hom-list}. The corresponding coordinate
character $p_{\{\alpha\}}|_G$, with $\T^{\{\alpha\}}$ identified
with $\T$, extends $v$. Hence every countable subgroup of $G$ is
$h$-embedded. Lemmas~\ref{lem:closed}
and~\ref{B:lem:countable} show that all these subgroups are closed and
that all compact subsets of $G$ are finite.

\medskip\noindent\textbf{Step 5. Character kernels and density.}

Put $H_\alpha=f_\alpha(F)$ and $L_\alpha=\gen{E_\alpha}$.
Equation~\eqref{B:eq:image} gives
$H_\alpha=B_\alpha+L_\alpha$, with $B_\alpha$ countable, and
condition (ii) makes the subgroups $L_\alpha$ an algebraic direct sum.
Also, every $\ker f_\alpha$ is countable by
\eqref{B:eq:small-kernel}. Applying Lemma~\ref{B:lem:all-kernels}
with $A=F$ and $g_\alpha=f_\alpha$, and using the character
formula~\eqref{C:eq:integer-character}, we obtain
\begin{equation}\label{B:eq:all-character-kernels}
 |\ker(\psi\circ f)|\leq\aleph_0
 \quad\text{for every nonzero continuous character }\psi:K\to\T.
\end{equation}

If $G$ were not dense in $K$, a nonzero character of
$K/\overline G^{\,K}$ would pull back to a nonzero character of
$K$ vanishing on $G$. This contradicts~\eqref{B:eq:all-character-kernels},
because $|G|=\cc$. Hence $G$ is dense and precompact.

The same argument applies to an uncountable subgroup $D\leq G$.
If $D$ were not dense in $G$, its closure in $K$ would be a proper
closed subgroup of $K$. A nonzero character annihilating that
closure would have uncountable kernel on $G$, again a contradiction.
Therefore every uncountable subgroup of $G$ is dense. In fact, since
characters of $G$ extend to $K$, the argument also proves that every
nonzero continuous character of $G$ has countable kernel.

\medskip\noindent\textbf{Step 6. The Baire property and Pontryagin reflexivity.}

Equation~\eqref{B:eq:category-met} realizes every pair in
\eqref{B:eq:category-list}. Thus the dense subgroup $G$ satisfies
\eqref{B:eq:category}, and Lemma~\ref{B:lem:baire} shows that $G$ is
Baire. We have already proved that every compact subset of $G$ is
finite. Corollary~\ref{B:cor:reflexive} now implies that $G$ is
Pontryagin reflexive.

\medskip\noindent\textbf{Step 7. Zero-dimensionality and the quotient obstruction.}

For each $\alpha<\cc$, equations~\eqref{B:eq:image}
and~\eqref{B:eq:E-B} give the algebraic isomorphism
\[
 H_\alpha/B_\alpha\cong\gen{E_\alpha}.
\]
The group on the right is a nonzero free abelian group: $Y_\alpha$
is countable, $|X|=\cc$, and the entries indexed by
$X\setminus Y_\alpha$ are independent. If $H_\alpha=\T$, the
quotient $H_\alpha/B_\alpha$ would be divisible, whereas a nonzero
free abelian group is not divisible. Thus $H_\alpha$ is a proper
subgroup of $\T$.

Every proper subgroup $H$ of $\T$, with its subspace topology, is
zero-dimensional. Indeed, $H$ has empty interior: otherwise it
would be an open subgroup of the connected group $\T$, and hence
would equal $\T$. Consequently $\T\setminus H$ is dense.
Given $h\in H$ and a neighbourhood $O$ of $h$ in $\T$, choose an
open arc $I$ containing $h$, contained in $O$, and with both
endpoints outside $H$. The set $I\cap H$ is open in $H$; it is
also closed in $H$, because
$\cl_{\T}I\setminus I$ consists of those two endpoints.
These intersections form a base of open-and-closed sets for $H$.

It follows that every $H_\alpha$ is zero-dimensional. The topology
of $G$ inherited from $\prod_{\alpha<\cc}H_\alpha$ agrees with its
topology inherited from $\T^{\cc}$. Products and subspaces of
spaces with open-and-closed bases again have such bases, so $G$
is zero-dimensional.

Finally, the subgroup properties already proved allow us to apply
Lemma~\ref{lem:quotient-obstruction}. Every nontrivial Hausdorff
quotient of $G$ is therefore nonseparable. This completes the proof.
\end{proof}

In the following discussion, $G$, $H_\alpha$, $B_\alpha$, and
$E_\alpha$ retain the meanings assigned in the proof of
Theorem~\ref{B:thm:main}.
A separable quotient in the statements below is a Hausdorff quotient
endowed with its quotient topology.

Theorem~\ref{B:thm:main} gives an infinite Pontryagin-reflexive
abelian group with no nontrivial separable Hausdorff quotient.
Consequently, the answers to parts~(i), (ii), and~(iv) of
Problem~\ref{prob:reflexive} are negative: an infinite separable
quotient, with or without metrizability, would in particular be a
nontrivial separable quotient. The answer to part~(iii) is also
negative if the metrizable quotient is required to be nontrivial.
The connected group of Theorem~\ref{C:thm:main} gives the same answers.

\begin{remark}
The quotient topology is essential. The group constructed here has
nonzero continuous characters into $\T$, so it has nontrivial
separable metrizable continuous homomorphic images with their subspace
topologies. These maps are not quotient maps onto those images.
There is therefore no conflict between the quotient obstruction and
the existence of the coordinate characters used in the construction.
\end{remark}

\begin{remark}
Theorem~\ref{B:thm:main} retains zero-dimensionality as well as the
countable and uncountable subgroup properties of the example
in~\cite[Theorem~3.5]{LMT}, and simultaneously adds the Baire
property and Pontryagin reflexivity. It therefore answers
parts~(b) and~(c) of Problem~\ref{prob:regularity} even with
zero-dimensionality retained. In particular, this group is totally
disconnected. The connected example in Theorem~\ref{C:thm:main}
answers all three parts of that problem simultaneously.
\end{remark}

\section{Closed subgroups and quotients of finite powers}\label{sec:powers}

The coordinate information recalled below is the input for the proof.
After two auxiliary lemmas, Theorem~\ref{P:thm:structure} establishes
both the closed-subgroup normal form and the quotient conclusion in
one argument. We then apply the same reasoning to the free example of
Section~\ref{sec:free}.

We retain the conventions of Section~\ref{sec:prelim} and the notation $A(X)$ for the abstract free abelian group on $X$.

We recall the parts of the construction in~\cite[Theorem~3.5]{LMT} needed
below. Let $X$ be a set of cardinality $\cc$ and let $A(X)$ be the free
abelian group on $X$. The construction provides a monomorphism
\[
  f:A(X)\longrightarrow\T^{\cc},\qquad G=f(A(X)),
\]
whose image is dense. For $\alpha<\cc$, let $\pi_\alpha$ be the
$\alpha$th coordinate projection, and put
\[
  f_\alpha=\pi_\alpha\circ f,\qquad
  q_\alpha=\pi_\alpha|_G,\qquad H_\alpha=q_\alpha(G).
\]
There is a countable set $Y_\alpha\subseteq X$ such that
$\ker f_\alpha\subseteq\gen{Y_\alpha}$. Since $f$ is injective,
\begin{equation}\label{P:eq:coordinate-kernel}
  \ker q_\alpha\text{ is countable for every }\alpha<\cc.
\end{equation}

The construction also uses a set $E\subseteq\T$ whose elements have
infinite order and are independent over $\Z$, together with a partition
$E=\bigcup_{\alpha<\cc}E_\alpha$. We use independence in the sense of~\eqref{B:eq:independence},
with $P=\{0\}$. Set
$B_\alpha=\gen{f_\alpha(Y_\alpha)}$. Condition~(iv) of the construction
gives
\begin{equation}\label{P:eq:coordinate-decomposition}
  H_\alpha\subseteq B_\alpha+\gen{E_\alpha},
  \qquad |B_\alpha|\leq\aleph_0.
\end{equation}
In particular, the subgroups $\gen{E_\alpha}$ form an algebraic direct sum
inside $\T$. Finally, $G$ is torsion-free, since it is isomorphic to
$A(X)$, and every countable subgroup of $G$ is $h$-embedded in $G$.

Equation~\eqref{P:eq:coordinate-decomposition} and the independence of
the subgroups $\gen{E_\alpha}$ verify the hypotheses of
Lemma~\ref{B:lem:sum}. Hence every finite-coordinate addition map
$s_F$ has countable kernel. This is also the counting argument
in~\cite[proof of Theorem~3.11, Claim~3]{LMT}.

For $m=(m_1,\ldots,m_k)\in\Z^k$ and $x=(x_1,\ldots,x_k)\in G^k$,
write $m\cdot x=\sum_{i=1}^k m_i x_i$. If $M\leq G^k$, write
$m\cdot M=\{m\cdot x:x\in M\}$.

Every continuous character $\chi:G^k\to\T$ extends to the compact
completion $(\T^{\cc})^k$. The characters of a product of circles are
finite integer combinations of coordinate projections. Therefore
\begin{equation}\label{P:eq:character}
  \chi(x)=\sum_{\alpha\in F_\chi}
       q_\alpha(m_{\chi,\alpha}\cdot x),
  \qquad m_{\chi,\alpha}\in\Z^k,
\end{equation}
where $F_\chi\subseteq\cc$ is finite. We may omit zero rows; the zero
character then corresponds to $F_\chi=\varnothing$. These standard facts
about completions and characters of precompact groups can be found
in~\cite{AT}.

\begin{lemma}[Countable images of character rows]\label{P:lem:row}
Let $\chi$ be a continuous character of $G^k$, written as
in~\eqref{P:eq:character}. For every $\alpha\in F_\chi$, the subgroup
$m_{\chi,\alpha}\cdot\ker\chi$ of $G$ is countable.
\end{lemma}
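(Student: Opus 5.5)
Fix $\alpha_0\in F_\chi$ and write $m=m_{\chi,\alpha_0}$. For $x\in\ker\chi$, the elements $y_\alpha=m_{\chi,\alpha}\cdot x\in G$ satisfy
\[
 \sum_{\alpha\in F_\chi}q_\alpha(y_\alpha)=\chi(x)=0 .
\]
Hence the tuple $(q_\alpha(y_\alpha))_{\alpha\in F_\chi}$ lies in the kernel of the addition map
\[
 s_{F_\chi}:\prod_{\alpha\in F_\chi}H_\alpha\to\T .
\]
By \eqref{P:eq:coordinate-decomposition}, the independence of the subgroups $\gen{E_\alpha}$, and Lemma~\ref{B:lem:sum}, this kernel is countable. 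Projecting to the $\alpha_0$ coordinate, the set $q_{\alpha_0}(m\cdot\ker\chi)$ is a countable subset of $H_{\alpha_0}$.

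Next we pull back along $q_{\alpha_0}$. By \eqref{P:eq:coordinate-kernel}, $\ker q_{\alpha_0}$ is countable, so every fibre of $q_{\alpha_0}$ is a countable coset. Therefore
\[
 m\cdot\ker\chi\subseteq q_{\alpha_0}^{-1}\bigl(q_{\alpha_0}(m\cdot\ker\chi)\bigr),
\]
and the right-hand side is a countable union of countable sets, hence countable. The set $m\cdot\ker\chi$ is a subgroup because $x\mapsto m\cdot x$ is a homomorphism $G^k\to G$. This proves the lemma.

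Unlike Lemma~\ref{B:lem:all-kernels}, no division by an integer coefficient is needed here. The row $m$ is applied to the whole tuple, and we only claim countability of the image $m\cdot\ker\chi$, not of $\ker\chi$ itself. The kernel of $\chi$ can be uncountable, for instance when $m=0$ on some directions, but that does not matter.

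The only step needing care is checking that the hypotheses of Lemma~\ref{B:lem:sum} hold exactly for the recalled construction. The required containments $H_\alpha\subseteq B_\alpha+\gen{E_\alpha}$ with $B_\alpha$ countable, and the directness of the sum of the $\gen{E_\alpha}$, are both stated just before the lemma, so I expect no real obstacle.
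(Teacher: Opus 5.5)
Your proposal is correct and follows the paper's proof: you place the tuple $\bigl(q_\alpha(m_{\chi,\alpha}\cdot x)\bigr)_{\alpha\in F_\chi}$ in the countable kernel of $s_{F_\chi}$ given by Lemma~\ref{B:lem:sum}, and then pull back along $q_{\alpha_0}$, whose fibres are countable by \eqref{P:eq:coordinate-kernel}. Your closing remark about not needing division by an integer coefficient is also accurate.
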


\begin{proof}
For $x\in\ker\chi$, the tuple
\[
  \bigl(q_\alpha(m_{\chi,\alpha}\cdot x)\bigr)_{\alpha\in F_\chi}
\]
belongs to $\ker s_{F_\chi}$. By Lemma~\ref{B:lem:sum}, its $\alpha$th
coordinate takes only countably many values. Every fibre of $q_\alpha$
is countable by~\eqref{P:eq:coordinate-kernel}. Hence the set of possible
values of $m_{\chi,\alpha}\cdot x$ is countable.
\end{proof}

We need two permanence facts. The finite-product assertion
is~\cite[Lemma~3.9]{LMT}; the quotient assertion is the countable-kernel
argument in~\cite[proof of Lemma~3.8]{LMT}. Their short proofs are
included for completeness.

\begin{lemma}[Permanence of countable $h$-embeddedness]\label{P:lem:permanence}
The following assertions hold for Hausdorff abelian topological groups.
\begin{enumerate}[label=\textup{(\arabic*)}]
\item If every countable subgroup of each of $A_1,\ldots,A_r$ is
$h$-embedded, where $r\geq1$ is finite, then the same is true of
$A_1\times\cdots\times A_r$.
\item If every countable subgroup of $A$ is $h$-embedded and $C$ is a
countable closed subgroup of $A$, then every countable subgroup of $A/C$
is $h$-embedded in $A/C$.
\end{enumerate}
\end{lemma}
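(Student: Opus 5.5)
For (1), I will use the projections and an algebraic extension step. Let $D\leq A_1\times\cdots\times A_r$ be countable and let $u:D\to\T$ be an abstract homomorphism. Put $D_i=p_i(D)$, where $p_i$ is the $i$th coordinate projection. Each $D_i$ is a countable subgroup of $A_i$, and $D\leq D_1\times\cdots\times D_r$. Since $\T$ is divisible, Lemma~\ref{C:lem:divisible} extends $u$ to a homomorphism $\tilde u:D_1\times\cdots\times D_r\to\T$. Restricting $\tilde u$ to the $i$th factor gives homomorphisms $u_i:D_i\to\T$ with $\tilde u(d_1,\ldots,d_r)=\sum_i u_i(d_i)$. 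By hypothesis, each $u_i$ extends to a continuous character $\chi_i$ of $A_i$. Then $\chi(a)=\sum_i\chi_i(p_i(a))$ is a continuous character of the product, and it extends $u$. This part contains no real difficulty.

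For (2), let $\pi:A\to A/C$ be the quotient map, and let $D\leq A/C$ be countable with $u:D\to\T$ an abstract homomorphism. Since $C$ is countable, $\pi^{-1}(D)$ is a countable subgroup of $A$, being a union of countably many cosets of $C$. The composite $u\circ\pi$ is a homomorphism $\pi^{-1}(D)\to\T$, and it vanishes on $C$. By hypothesis it extends to a continuous character $\chi$ of $A$. Because $C\subseteq\pi^{-1}(D)$, the character $\chi$ vanishes on $C$. It therefore factors as $\chi=\bar\chi\circ\pi$ for a homomorphism $\bar\chi:A/C\to\T$.

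It remains to check that $\bar\chi$ is continuous. This is the step where the quotient topology matters. For an open $O\subseteq\T$, we have $\pi^{-1}(\bar\chi^{-1}(O))=\chi^{-1}(O)$, which is open in $A$. Since $\pi$ is a quotient map, $\bar\chi^{-1}(O)$ is open in $A/C$. Hence $\bar\chi$ is continuous, and it extends $u$ because $\bar\chi(\pi(a))=u(\pi(a))$ for every $a\in\pi^{-1}(D)$.

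The closedness of $C$ is used only so that $A/C$ is Hausdorff, in line with the paper's standing conventions. The countability of $C$ is exactly what makes $\pi^{-1}(D)$ countable, which is the one genuine point of the argument. I expect no serious obstacle in either part.
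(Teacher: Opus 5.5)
Your proposal is correct and follows the paper's proof essentially step for step. In (1) you extend $u$ to $D_1\times\cdots\times D_r$ by divisibility of $\T$, split the extension into coordinate homomorphisms, extend each one and add them. In (2) you pull back to the countable preimage, extend there, and descend through the quotient map; your explicit check that the descended character is continuous, using the quotient topology, fills in a detail the paper leaves implicit.
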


\begin{proof}
(1) Let $D\leq\prod_{i=1}^r A_i$ be countable, and let
$u:D\to\T$ be an abstract homomorphism. Write $D_i$ for the $i$th
coordinate projection of $D$. By Lemma~\ref{C:lem:divisible}, $u$ extends to a homomorphism
$v:\prod_{i=1}^r D_i\to\T$. Since the product is finite,
there are homomorphisms $v_i:D_i\to\T$ such that
\[
  v(d_1,\ldots,d_r)=\sum_{i=1}^r v_i(d_i).
\]
Extend each $v_i$ to a continuous character of $A_i$ and add the resulting
coordinate characters. This gives the required extension of $u$.

(2) Let $q:A\to A/C$ be the quotient map. If $D\leq A/C$ is
countable, then $q^{-1}(D)$ is countable. Given $u:D\to\T$, extend
$u\circ q|_{q^{-1}(D)}$ to a continuous character $\chi$ of $A$.
The character $\chi$ vanishes on $C$, so it descends to a continuous
character of $A/C$ extending $u$.
\end{proof}

\begin{theorem}\label{P:thm:structure}
Let $G$ be the group obtained in the proof of~\cite[Theorem~3.5]{LMT},
let $k\geq1$, and let $N$ be a closed subgroup of $G^k$. There exist
$0\leq r\leq k$, a matrix $U\in\GL_k(\Z)$, and a countable closed
subgroup $C\leq G^r$ such that $\Phi_U(N)=C\times G^{k-r}$, where
$\Phi_U$ is the coordinate automorphism induced by $U$. Consequently,
$G^k/N\cong G^r/C$ topologically. Moreover, every countable subgroup
of every Hausdorff quotient of $G^k$ is $h$-embedded and closed.
\end{theorem}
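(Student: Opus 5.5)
The plan is to extract from $N$ the sublattice of integer row vectors that have countable image on $N$, and to change coordinates so that this sublattice becomes $\Z^r\times\{0\}$. Define
\[
 M=\{m\in\Z^k:\ m\cdot N\text{ is countable}\}.
\]

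\emph{Step 1: $M$ is a saturated subgroup of $\Z^k$.} Since $(m+m')\cdot x=m\cdot x+m'\cdot x$, we have $(m+m')\cdot N\subseteq m\cdot N+m'\cdot N$, so $M$ is a subgroup. If $n\geq1$ and $nm\in M$, then $n(m\cdot N)=(nm)\cdot N$ is countable. Multiplication by $n$ is injective on the torsion-free group $G$, so $m\cdot N$ is countable and $m\in M$. Thus $\Z^k/M$ is torsion-free, so $M$ is a direct summand of rank $r\leq k$. Choose $U\in\GL_k(\Z)$ whose first $r$ rows $u_1,\dots,u_r$ form a basis of $M$.

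\emph{Step 2: translate to the new coordinates.} Put $N'=\Phi_U(N)$, which is again closed. For $m'\in\Z^k$ we have $m'\cdot\Phi_U(x)=(m'U)\cdot x$. Hence $m'\cdot N'$ is countable exactly when $m'U\in M$, that is, exactly when $m'\in\Z^r\times\{0\}$. In particular, the first $r$ coordinate projections of $N'$ are countable subgroups $C_1,\dots,C_r$ of $G$. It follows that $N'\subseteq C\times G^{k-r}$, where $C=p_{[1,r]}(N')\subseteq C_1\times\cdots\times C_r$ is countable.

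\emph{Step 3: the reverse inclusion $\{0\}^r\times G^{k-r}\subseteq N'$.} This is the main step. Suppose $x\in\{0\}^r\times G^{k-r}$ and $x\notin N'$. The quotient $G^k/N'$ is a Hausdorff precompact abelian group, so its continuous characters separate points. Hence there is a continuous character $\chi$ of $G^k$ with $\chi(N')=0$ and $\chi(x)\neq0$. Write $\chi$ as in~\eqref{P:eq:character}. Since $N'\subseteq\ker\chi$, Lemma~\ref{P:lem:row} shows that every row $m_{\chi,\alpha}\cdot N'$ is countable. By Step 2, each row $m_{\chi,\alpha}$ is therefore supported in the first $r$ coordinates. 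Then $\chi$ depends only on the first $r$ coordinates, which forces $\chi(x)=0$, a contradiction.

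Hence $N'=C\times G^{k-r}$, because any $y\in N'$ satisfies $y-(p_{[1,r]}y,0)\in N'$. The subgroup $C=\{c:(c,0)\in N'\}$ is closed in $G^r$, since it is the preimage of the closed set $N'$ under the embedding $c\mapsto(c,0)$. The step I expect to need the most care is the point-separation fact for the precompact quotient $G^k/N'$. I would justify it by noting that this quotient embeds in its compact completion, where characters separate points by compact duality.

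\emph{Step 4: quotients.} The automorphism $\Phi_U$ induces topological isomorphisms
\[
 G^k/N\cong G^k/N'=(G^r\times G^{k-r})/(C\times G^{k-r})\cong G^r/C .
\]
For the final assertion, note that a Hausdorff quotient of $G^k$ has the form $G^k/N$ with $N$ closed, so it is isomorphic to $G^r/C$. Countable subgroups of $G$ are $h$-embedded, so by Lemma~\ref{P:lem:permanence}(1) the same holds in $G^r$. Since $C$ is countable and closed, Lemma~\ref{P:lem:permanence}(2) shows that every countable subgroup of $G^r/C$ is $h$-embedded. Lemma~\ref{lem:closed} then shows that every such subgroup is closed. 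When $r=0$ the quotient is trivial, and the conclusion holds vacuously.
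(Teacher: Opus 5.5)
Your proposal is correct and follows essentially the same route as the paper. Both arguments use the saturated lattice of rows with countable image on $N$, and both use Lemma~\ref{P:lem:row} together with point separation in the precompact quotient to show that $\{0\}^r\times G^{k-r}$ (the paper's $\ker p$) lies in the subgroup. Both finish with the permanence lemma.

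The differences are only presentational. You work directly with $\Phi_U(N)$ and argue by contradiction. You also obtain closedness of $C$ from the embedding $c\mapsto(c,0)$, whereas the paper uses the openness of $p$ and the identity $p^{-1}(C)=N$.
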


\begin{proof}
The proof extracts the integer rows whose images on $N$ are countable.
They form a saturated sublattice, so an integral change of coordinates
turns them into coordinate projections. Character separation then
identifies $N$ as the inverse image of a countable closed subgroup.
The permanence lemma transfers $h$-embeddedness to the resulting
quotient.

\medskip\noindent\textbf{Step 1. Characters separating the closed subgroup.}

For the given closed subgroup $N\leq G^k$, put
\[
  N^\perp=\{\chi\in\widehat{G^k}:\chi|_N=0\}.
\]
The Hausdorff quotient $G^k/N$ is precompact. Its continuous characters
separate points, as follows by embedding it into its compact completion.
Consequently,
\begin{equation}\label{P:eq:annihilator}
  N=\bigcap_{\chi\in N^\perp}\ker\chi.
\end{equation}

\medskip\noindent\textbf{Step 2. The saturated lattice and its coordinate map.}
Define
\begin{equation}\label{P:eq:lattice}
  L_N=\{m\in\Z^k:|m\cdot N|\leq\aleph_0\}.
\end{equation}
This is a subgroup of $\Z^k$: for $m,n\in L_N$,
\[
  (m-n)\cdot N\subseteq(m\cdot N)-(n\cdot N),
\]
and the set on the right is countable. Moreover, $L_N$ is saturated.
Indeed, if $d\neq0$ is an integer and $dm\in L_N$, then
$d(m\cdot N)$ is countable. Multiplication by $d$ is injective on the
torsion-free group $G$, so $m\cdot N$ is countable and $m\in L_N$.

It follows that $\Z^k/L_N$ is a finitely generated torsion-free abelian
group, hence free. Thus $L_N$ is a direct summand of $\Z^k$. Choose a
basis $b_1,\ldots,b_r$ of $L_N$ and extend it to a basis
$b_1,\ldots,b_k$ of $\Z^k$. Let $U\in\GL_k(\Z)$ have these vectors
as its rows, and define
\[
  p:G^k\longrightarrow G^r,
  \qquad p(x)=(b_1\cdot x,\ldots,b_r\cdot x).
\]
The map $p$ is the composition of $\Phi_U$ with the projection onto the
first $r$ coordinates. In particular, $p$ is an open continuous
surjection. Put $K=\ker p$ and $C=p(N)$. Since $b_i\in L_N$ for
$i\leq r$,
\begin{equation}\label{P:eq:countable-image}
  C\subseteq\prod_{i=1}^r(b_i\cdot N)
\end{equation}
is countable. When $r=0$, this assertion means $C=\{0\}$.

\medskip\noindent\textbf{Step 3. Recovering the entire subgroup from the coordinate map.}

We claim that $K\subseteq N$. Let $\chi\in N^\perp$, and write $\chi$
as in~\eqref{P:eq:character}. Since $N\subseteq\ker\chi$,
Lemma~\ref{P:lem:row} gives
\[
  m_{\chi,\alpha}\cdot N
     \subseteq m_{\chi,\alpha}\cdot\ker\chi,
  \qquad |m_{\chi,\alpha}\cdot N|\leq\aleph_0.
\]
Thus every row $m_{\chi,\alpha}$ belongs to $L_N$ and is an integer
linear combination of $b_1,\ldots,b_r$. If $x\in K$, all these rows
vanish on $x$, and hence $\chi(x)=0$. Equation~\eqref{P:eq:annihilator}
now yields $x\in N$, proving the claim.

\medskip\noindent\textbf{Step 4. The normal form and the topological quotient.}

As $K\subseteq N$, we have
\begin{equation}\label{P:eq:inverse-image}
  p^{-1}(C)=p^{-1}(p(N))=N.
\end{equation}
The map $p$ is a quotient map and $N$ is closed, so $C$ is closed in
$G^r$. In the coordinates given by $\Phi_U$,
equation~\eqref{P:eq:inverse-image} becomes
\[
  \Phi_U(N)=C\times G^{k-r}.
\]
Finally, the composition of $p$ with the quotient map $G^r\to G^r/C$
is an open continuous surjection with kernel $N$. It therefore induces
the asserted topological isomorphism $G^k/N\cong G^r/C$.

\medskip\noindent\textbf{Step 5. Countable subgroups of the quotient.}

The case $r=0$ gives the trivial quotient and is immediate.
Suppose $r\geq1$. Every countable subgroup of $G$ is $h$-embedded
by the recalled construction. Lemma~\ref{P:lem:permanence}(1)
therefore gives the same property for $G^r$. Since $C$ is countable
and closed, part~(2) of that lemma shows that every countable subgroup
of $G^r/C$ is $h$-embedded. Lemma~\ref{lem:closed} gives its
closedness. The topological isomorphism from Step~4 transfers both
properties to $G^k/N$. Every Hausdorff quotient of $G^k$ has this
form, so the proof is complete.
\end{proof}

The last assertion of Theorem~\ref{P:thm:structure} answers both
alternatives in Problem~\ref{prob:powers} affirmatively. In particular,
countable subgroups of the quotients in~\cite[Problem~3.12]{LMT}
are both $h$-embedded and closed.

\begin{corollary}\label{P:cor:square}
If $N$ is an uncountable proper closed subgroup of $G^2$, then there are
$U\in\GL_2(\Z)$ and a countable closed subgroup $C\leq G$ such that
$\Phi_U(N)=C\times G$. In particular, $G^2/N\cong G/C$ topologically.
\end{corollary}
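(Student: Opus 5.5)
This follows by specializing Theorem~\ref{P:thm:structure} to $k=2$ and excluding the degenerate values of $r$. Apply the theorem to the closed subgroup $N\leq G^2$. It gives $0\leq r\leq 2$, a matrix $U\in\GL_2(\Z)$, and a countable closed subgroup $C\leq G^r$ with $\Phi_U(N)=C\times G^{2-r}$ and $G^2/N\cong G^r/C$. The task is to show that the hypotheses on $N$ force $r=1$. Once $r=1$ is known, $C$ is a countable closed subgroup of $G^1=G$, and the displayed normal form and topological isomorphism are exactly the claim.

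To exclude $r=2$: then $\Phi_U(N)=C$ is countable. Since $\Phi_U$ is a bijection, $N$ itself is countable, which contradicts the hypothesis that $N$ is uncountable.

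To exclude $r=0$: the convention $G^0=\{0\}$ gives $C=\{0\}$ and $\Phi_U(N)=G^2$. Since $\Phi_U$ is an automorphism, $N=G^2$, which contradicts properness.

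No step is a real obstacle; the only point needing care is reading the $r=0$ case correctly through the convention $A^0=\{0\}$. In that case the countable subgroup $C$ is forced to be trivial, as noted in Step~2 of the proof of Theorem~\ref{P:thm:structure}.
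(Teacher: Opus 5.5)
Your proposal is correct and matches the paper's proof. The paper likewise applies Theorem~\ref{P:thm:structure} with $k=2$ and notes that $r=0$ forces $N=G^2$ while $r=2$ makes $N$ countable, so $r=1$.
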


\begin{proof}
In Theorem~\ref{P:thm:structure}, the case $r=0$ gives $N=G^2$, while
$r=2$ makes $N$ countable. Thus $r=1$.
\end{proof}

\begin{remark}\label{P:rem:hypotheses}
Steps~1--4 of the proof of Theorem~\ref{P:thm:structure} establish
the structure assertion without using $h$-embeddedness. They apply
to any torsion-free dense subgroup $A\leq\T^I$ for which each
coordinate projection has countable kernel and the addition map
$\prod_{\alpha\in F}\pi_\alpha(A)\to\T$ has countable kernel for
every finite nonempty $F\subseteq I$. Under these assumptions, for
every closed $N\leq A^k$ there are $U\in\GL_k(\Z)$ and a countable
closed $C\leq A^r$ with $\Phi_U(N)=C\times A^{k-r}$ and
$A^k/N\cong A^r/C$.

The conclusion about $h$-embedded countable subgroups of these quotients
requires the additional assumption that every countable subgroup of
$A$ is $h$-embedded, together with Lemma~\ref{P:lem:permanence}.
For example, if $\alpha\in\T$ has infinite order, the dense cyclic
subgroup $A=\gen{\alpha}\leq\T$ satisfies the one-coordinate kernel
conditions and is torsion-free. It is not $h$-embedded in itself:
choose $t\in\T\setminus\{n\alpha:n\in\Z\}$. The homomorphism
$k\alpha\mapsto kt$ is not continuous, since a continuous character
of $A$ extends to $\T$ and hence must be the restriction of an integer
multiple map. Thus the structural hypotheses alone do not imply the
$h$-embeddedness conclusion.

For the original group $G$, the independence condition
in~\eqref{P:eq:coordinate-decomposition} supplies the addition-kernel
hypothesis. In particular, the argument uses features of the construction,
rather than only the conclusions stated in~\cite[Theorem~3.5]{LMT}.
\end{remark}

\begin{corollary}[Finite powers of the free Baire reflexive example]
\label{P:cor:free-powers}
Let $G$ be the group constructed in Theorem~\ref{B:thm:main}.
For every integer $k\geq1$ and every closed subgroup $N\leq G^k$,
there exist $0\leq r\leq k$, $U\in\GL_k(\Z)$, and a countable
closed subgroup $C\leq G^r$ such that
\[
 \Phi_U(N)=C\times G^{k-r},\qquad G^k/N\cong G^r/C
\]
topologically. Every countable subgroup of every Hausdorff quotient
of $G^k$ is $h$-embedded and closed.
\end{corollary}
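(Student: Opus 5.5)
The plan is to check that the group $G=f(F)$ of Theorem~\ref{B:thm:main} satisfies the structural hypotheses isolated in Remark~\ref{P:rem:hypotheses}. Steps~1--4 of the proof of Theorem~\ref{P:thm:structure} then apply verbatim and give the normal form; Step~5 then yields the quotient conclusion. The required hypotheses are:
\begin{enumerate}
\item $G$ is torsion-free and dense in $\T^{\cc}$;
\item each coordinate projection $q_\alpha=\pi_\alpha|_G$ has countable kernel;
\item every finite-coordinate addition map $s_F:\prod_{\alpha\in F}q_\alpha(G)\to\T$ has countable kernel;
\item every countable subgroup of $G$ is $h$-embedded.
\end{enumerate}

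Items (i), (ii) and (iv) were established in the proof of Theorem~\ref{B:thm:main}. Torsion-freeness holds because $f$ is injective on the free group $F$, and density was proved in Step~5 there. Countability of $\ker q_\alpha$ follows from \eqref{B:eq:small-kernel}, since $\ker f_\alpha\subseteq A(Y_\alpha)$ with $Y_\alpha$ countable and $f$ injective. $h$-embeddedness was proved in Step~4.

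For item (iii), I will use \eqref{B:eq:image}: $H_\alpha=q_\alpha(G)=B_\alpha+\gen{E_\alpha}$, with $B_\alpha$ countable. By condition~(ii) of the recursion, the labelled family of all free entries is independent, so the $E_\alpha$ are pairwise disjoint with independent union, and the subgroups $\gen{E_\alpha}$ form an algebraic direct sum. Lemma~\ref{B:lem:sum} with $H_\alpha$, $B_\alpha$, $L_\alpha=\gen{E_\alpha}$ then gives countability of $\ker s_F$. With this, Lemma~\ref{P:lem:row} holds for this $G$, because its proof used only (ii) and (iii). Steps~1--4 of Theorem~\ref{P:thm:structure} then go through unchanged: the saturation of $L_N$ uses torsion-freeness, and the character representation \eqref{P:eq:character} uses density in $\T^{\cc}$.

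The result is $U\in\GL_k(\Z)$ and a countable closed $C\leq G^r$ with $\Phi_U(N)=C\times G^{k-r}$ and $G^k/N\cong G^r/C$. For the last assertion I will repeat Step~5 of that proof. Lemma~\ref{P:lem:permanence}(1) shows that every countable subgroup of $G^r$ is $h$-embedded. Part~(2), applied to the countable closed subgroup $C$, transfers this to $G^r/C$. Lemma~\ref{lem:closed} gives closedness, and the topological isomorphism carries both properties to $G^k/N$. Every Hausdorff quotient of $G^k$ is $G^k/N$ for a closed $N$, which completes the argument.

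The only point needing care is confirming that nothing in Steps~1--4 depended on features of the original LMT group beyond (i)--(iii). That is exactly what Remark~\ref{P:rem:hypotheses} asserts, so I expect no real obstacle. The one detail I will check is that \eqref{B:eq:image} yields the decomposition $H_\alpha\subseteq B_\alpha+L_\alpha$ with the $L_\alpha$ in direct sum; that is precisely what Lemma~\ref{B:lem:sum} requires.
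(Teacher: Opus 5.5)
Your proposal is correct and follows essentially the same route as the paper. You verify torsion-freeness, density, countable coordinate kernels, and (via \eqref{B:eq:image}, independence of the free entries, and Lemma~\ref{B:lem:sum}) countable kernels of the finite addition maps; you then invoke Remark~\ref{P:rem:hypotheses} for the normal form and Lemma~\ref{P:lem:permanence} with Lemma~\ref{lem:closed} for the quotient assertion. The only detail left implicit is $r=0$: Lemma~\ref{P:lem:permanence}(1) is stated only for $r\geq1$, but then $N=G^k$ and the quotient is trivial, so the conclusion is immediate, as the paper notes.
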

\begin{proof}
Use the coordinate maps and the sets $B_\alpha,E_\alpha$ from the
proof of Theorem~\ref{B:thm:main}. The group $G$ is torsion-free and
dense in $\T^{\cc}$ by that theorem. Each coordinate kernel is countable
by~\eqref{B:eq:small-kernel}. Its coordinate images satisfy
\[
 H_\alpha=B_\alpha+\gen{E_\alpha},\qquad |B_\alpha|\leq\aleph_0,
\]
by~\eqref{B:eq:image}, and the subgroups $\gen{E_\alpha}$ form an
algebraic direct sum, by the independence of all free entries.
Lemma~\ref{B:lem:sum} therefore gives countable kernels for all
finite-coordinate addition maps. The structure conclusion now follows
from Remark~\ref{P:rem:hypotheses}. Finally, all countable subgroups of
$G$ are $h$-embedded. Applying both parts of
Lemma~\ref{P:lem:permanence} to $G^r/C$, and then
Lemma~\ref{lem:closed}, proves the quotient assertion. The case
$r=0$ is immediate.
\end{proof}

Return to the original group $G$ of Theorem~\ref{P:thm:structure}.
The closedness assertion can also be expressed directly in terms of
subgroups of $G^k$. The following corollary and example also apply to
the free group of Theorem~\ref{B:thm:main}, by
Corollary~\ref{P:cor:free-powers}.

\begin{corollary}\label{P:cor:sum-closed}
If $N\leq G^k$ is closed and $D\leq G^k$ is countable, then $N+D$
is closed in $G^k$.
\end{corollary}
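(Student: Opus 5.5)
We deduce the corollary from the last assertion of Theorem~\ref{P:thm:structure} by passing to the quotient by $N$. Let $\pi:G^k\to G^k/N$ be the quotient homomorphism. Since $N$ is closed, $G^k/N$ is a Hausdorff quotient of $G^k$, and $\pi$ is continuous, surjective and open. The image $\pi(D)$ is a countable subgroup of $G^k/N$. The theorem says that every countable subgroup of every Hausdorff quotient of $G^k$ is closed, so $\pi(D)$ is closed in $G^k/N$.

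Next we pull this back. Continuity of $\pi$ makes $\pi^{-1}(\pi(D))$ closed in $G^k$. Since $\ker\pi=N$, this preimage is exactly $D+N$: an element $x$ satisfies $\pi(x)\in\pi(D)$ if and only if $x-d\in N$ for some $d\in D$. Hence $N+D$ is closed.

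No real obstacle is expected. The only points to check are that the quotient by a closed subgroup is Hausdorff, so the theorem applies, and the set-theoretic identity $\pi^{-1}(\pi(D))=D+N$. If one prefers to avoid the quotient language, the normal form $\Phi_U(N)=C\times G^{k-r}$ gives the same conclusion. Writing $p$ for the open map of Step~2 of that proof, we have $N+D=p^{-1}(C+p(D))$. Here $C+p(D)$ is a countable subgroup of $G^r$, and it is closed by Lemma~\ref{lem:closed} together with Lemma~\ref{P:lem:permanence}(1).
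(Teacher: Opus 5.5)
Your proof is correct. Your main route uses the final assertion of Theorem~\ref{P:thm:structure} as a black box: you pass to the quotient map $\pi:G^k\to G^k/N$ and pull back the closed countable subgroup $\pi(D)$.

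The paper instead works inside that theorem's proof. It pulls back $C+p(D)$ along the coordinate map $p:G^k\to G^r$, using $\ker p\subseteq N$. This route needs only Lemma~\ref{P:lem:permanence}(1) and Lemma~\ref{lem:closed}, and not the statement about quotients by $C$. Your fallback argument is exactly the paper's proof.
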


\begin{proof}
Use the open homomorphism $p:G^k\to G^r$ from the proof of
Theorem~\ref{P:thm:structure}, and put $C=p(N)$. Since $\ker p\subseteq N$,
\[
  N+D=p^{-1}\bigl(C+p(D)\bigr).
\]
The subgroup $C+p(D)$ is countable and therefore closed in $G^r$ by
Lemmas~\ref{P:lem:permanence}(1) and~\ref{lem:closed}. Its inverse image is closed.
\end{proof}

We finish by noting that the quotient assertion of Theorem~\ref{P:thm:structure} cannot be extended to
infinite powers. Both properties already fail for countable subgroups of
$G^\omega$ itself.

\begin{example}\label{P:ex:infinite}
Choose $0\neq g\in G$, and put $A=\gen{g}$. The subgroup $A$ is
countable and closed in $G$. Let
\[
  D=\bigoplus_{n\in\omega}A\leq G^\omega,
  \qquad z=(g,g,\ldots),\qquad B=D+\Z z.
\]
Both $D$ and $B$ are countable. Since finite-support tuples are dense in
$A^\omega$ and $A^\omega$ is closed in $G^\omega$,
\[
  \overline D^{\,G^\omega}
    =\overline B^{\,G^\omega}=A^\omega.
\]
The group $A^\omega$ is uncountable, so neither $D$ nor $B$ is closed.
Moreover, $D\cap\Z z=\{0\}$, because $g$ has infinite order. Fix
$0\neq t\in\T$ and define $u:B\to\T$ by
$u(d+nz)=nt$. If $u$ extended to a continuous character of $G^\omega$,
that character would vanish on $D$ and hence on $A^\omega$, contradicting
$u(z)=t$. Thus $B$ is not $h$-embedded in $G^\omega$.
\end{example}

\end{document}